\newcommand{\julcom}[1]{\textcolor{magenta} {\emph{Julien says: #1}}}
\newcommand{\matnorm}[1]{{\left\vert\kern-0.25ex\left\vert\kern-0.25ex\left\vert #1 
    \right\vert\kern-0.25ex\right\vert\kern-0.25ex\right\vert}}
\newcommand{\opnorm}[1]{{\left\vert\kern-0.25ex\left\vert\kern-0.25ex\left\vert #1 
    \right\vert\kern-0.25ex\right\vert\kern-0.25ex\right\vert}}		
\newcommand{\norm}[1]{\left\Vert#1\right\Vert}
\newcommand{\Real}{\mathbb R}
\newcommand{\nat}{\mathbb N}
\newcommand{\argmin}{\text{argmin}}
\newcommand{\argmax}{\text{argmax}}
\newcommand{\floor}[1]{ \left\lfloor #1 \right\rfloor }
\newcommand{\ceil}[1]{ \left\lceil #1 \right\rceil }
\newcommand{\data}{\ensuremath{ \mathcal D} }
\newcommand{\inspace}{\ensuremath{ \mathcal X}}
\newcommand{\outspace}{\ensuremath{ \mathcal Y}}
\newcommand{\convto}{\longrightarrow}
\newcommand{\bd}{\leadsto} %becomes dense relative to 
\newcommand{\bdu}{\twoheadrightarrow} %becomes dense relative to uniformly
\newcommand{\grid}{\ensuremath{  G}}
\newcommand{\metric}{\, \mathfrak{d}} % distance metric
\newcommand{\predfn}{\, \mathfrak{  \hat f_n}} % hypothesis
\newcommand{\hestthresh}{\ensuremath{ \lambda}}
\newcommand{\obserrbnd}{\bar{\mathfrak e}}
\newcommand{\decke}{\ensuremath{\mathfrak u}}
\newcommand{\boden}{\ensuremath{\mathfrak l}}
\newcommand{\seq}[2]{\ensuremath{\Bigl(#1\Bigr)_{#2}}}
\renewcommand{\Pr}{\mathrm{Pr}}
\newcommand{\pmeas}{\ensuremath{\mathbb P}} % probability measure
\newcommand{\beq}{\begin{equation}}
\newcommand{\eeq}{\end{equation}}
\newcommand{\tinc}{\ensuremath{ \Delta}}
\begin{document}
%\begin{frontmatter}  
   % than 10 words.

%\thanks[footnoteinfo]{ Corresponding author Julien Walden Huang. email: julien.huang@sjc.ox.ac.uk}

%\author[Oxford,Oxford2]{Julien Walden Huang}\ead{julien.huang@sjc.ox.ac.uk},  
%\author[Oxford,Oxford2]{Stephen Roberts}\ead{sjrob@robots.ox.ac.uk}
%\author[Oxford,Oxford2]{Jan-Peter Calliess}\ead{jan @robots.ox.ac.uk},               % e-mail address 

%\address[Oxford]{Oxford Machine Learning Research Group, University of Oxford, Oxford, United Kingdom}  % Please supply                         %\address[Oxford2]{Oxford-Man Institute of Quantitative Finance, University of Oxford, Oxford, United Kingdom}            

\title{Lipschitz Interpolation: Non-parametric Convergence under Bounded Stochastic Noise}

\author{\name Julien Walden Huang \email julien.huang@sjc.ox.ac.uk \\
      \addr 
      University of Oxford
      \AND
      \name Stephen Roberts \email sjrob@robots.ox.ac.uk \\
      \addr
      University of Oxford
      \AND
      \name Jan-Peter Calliess \email jan-peter.calliess@oxford-man.ox.ac.uk\\
      \addr 
      University of Oxford}

%\editor{ }

% full addresses      % here.

%\begin{keyword}                          non-parametrics, System Identification, Lipschitz Interpolation, Nonlinear systems, Convergence Rates, Asymptotics\end{keyword}                             
% The obtained convergence rates are of the form  $((n^{-1}log(n) )^r )_{n \in \nat}$ with $r = \frac{\alpha}{d+\eta\alpha}$ for any class of Holder-continuous functions with exponent $\alpha \in (0,1]$ with respect to $\norm{.}_p$ norm $p \in \nat$ in $\Real^d$ and where $\eta$ captures the influence of the tails of the noise distribution. 

\maketitle 

\begin{abstract}                         
This paper examines the asymptotic convergence properties of Lipschitz interpolation methods within the context of bounded stochastic noise. In the first part of the paper, we establish probabilistic consistency guarantees of the classical approach in a general setting and derive upper bounds on the uniform convergence rates. These bounds align with well-established optimal rates of non-parametric regression obtained in related settings and provide new precise upper bounds on the non-parametric regression problem under bounded noise assumptions. Practically, they
can serve as a theoretical tool for comparing Lipschitz interpolation to alternative non-parametric regression methods, providing a condition on the behaviour of the noise at the boundary of its support which indicates when Lipschitz interpolation should be expected to asymptotically outperform or underperform other approaches.
In the second part, we expand upon these results to include asymptotic guarantees for online learning of dynamics in discrete-time stochastic systems and illustrate their utility in deriving closed-loop stability guarantees of a simple controller. We also explore applications where the main assumption of prior knowledge of the Lipschitz constant is removed by adopting the LACKI framework (\cite{calliess2020lazily}) and deriving general asymptotic consistency. 
%These findings contribute to a deeper understanding of Lipschitz interpolation methods and their applicability in various contexts, providing theoretical insights for practitioners and researchers in the fields of non-parametric regression and data-based control. 
\end{abstract}

\begin{keywords}                Non-parametrics, System Identification, Lipschitz Interpolation, Non-linear systems, Convergence Rates, Asymptotics\end{keywords}  

%\end{frontmatter}

%==============================================================================

\section{Introduction} \label{sec:intro}

Non-parametric regression methods are a flexible class of machine learning algorithms that often enjoy strong estimation success even when little is known about the underlying target function or data-generating distribution. Generally,  theoretical guarantees on their convergence have been well-researched with classical optimal convergence rates obtained in seminal work by \cite{stone1982optimal} in the case where the noise is assumed to be essentially Gaussian; although various extensions relax this assumption, and a variety of general consistency results can be derived (\cite{gyorfi2002distribution}). In this paper, we will extend the literature on the non-parametric estimation problem by considering the setting of bounded stochastic noise and  a non-parametric regression framework known as Lipschitz interpolation\footnote{Also referred to as non-linear set membership (\cite{milanese2004set}) or kinky inference (\cite{calliess2020lazily})} (\cite{beliakov2006interpolation}. This theoretical context is of particular interest to the field of control where Lipschitz interpolation has been used in various predictive control applications and bounded  noise assumptions are commonly made.

More generally, the popularity of data-driven adaptive control frameworks has increased significantly in both industry and academia over the past two decades. These frameworks assume that the underlying system dynamics are partially or completely unknown and need to be identified through computational machine learning-based approaches. Traditionally, linear parametric regression methods have been extensively studied and utilized for this purpose (\cite{ljung2010perspectives}). However, with recent advancements, the research focus has expanded to encompass non-linear approaches, reflecting the growing interest in more expressive modelling techniques (\cite{schoukens2019nonlinear}). Non-linear non-parametric regression methods such as the Nadaraya-Watson estimator (\cite{nadaraya1964estimating}), Polynomial Spline Interpolation (\cite{stone1994use}), general Lipschitz Interpolation or Gaussian process regression (\cite{williams2006gaussian}) which offer flexible predictors capable of modelling complex dynamics with minimal hyperparameter tuning have emerged as natural tools in this context.
In particular, Gaussian process regression and Lipschitz interpolation frameworks have been increasingly studied due to their inherent ability to provide uncertainty quantification and worst-case error guarantees which enable the design of robust controllers that ensure system safety and meet desired performance criterias (e.g. see \cite{hewing2019cautious}, \cite{canale2007power}).

In contrast to the probabilistic nature of the uncertainty characterisation provided by Gaussian processes, Lipschitz interpolation-based techniques offer deterministic guarantees in the form of feasible systems sets (\cite{milanese2004set}) and worst-case error bounds (\cite{calliess2020lazily}). This has been particularly useful in the context of safety-critical model predictive control (MPC) where a number of associated non-linear controllers have been designed and are being researched (see \cite{canale2014nonlinear}, \cite{manzano2020robust}, \cite{manzano2021componentwise} for a selection). At the same time, this  popularity has led to several recent extensions of the original Lipschitz interpolation framework: (\cite{calliess2020lazily}) relaxes the assumption of prior knowledge of the Lipschitz constant in favour of a fully data-driven approach by incorporating a Lipschitz constant estimation procedure, (\citet{maddalena2020learning}) proposes an equivalent smooth formulation which is more suited for controllers that rely on gradient computations,
(\cite{blaas2019localised}) extends the framework by incorporating localised Lipschitz constants and (\cite{manzano2022input}) proposes a computationally more efficient approach that retains key properties of the original Lipschitz interpolation framework.

Given the growing use of Lipschitz interpolation frameworks in control, obtaining a strong theoretical understanding of this method is essential. While several finite sample guarantees and worst-case error bounds already exist (see in particular \cite{milanese2004set}, \cite{calliess2020lazily}), few consistency results have been derived, to the best of our knowledge, none under stochastic noise. 
%\jcom{check the last subsentence}\julcom{"few" should be true} 
By contrast, numerous asymptotic guarantees and convergence rates have been obtained for other popular non-parametric methods. In particular, for alternative safe-learning frameworks based on Gaussian processes, both the pointwise convergence of the posterior mean function (\cite{seeger2008information}, \cite{yang2017frequentist}, \cite{wynne2021convergence}) and the contraction rate of the posterior distribution, which provide a measure of uncertainty quantification (\cite{van2008rates}, \cite{van2011information}), of the fitted Gaussian processes have been derived. 

These types of asymptotic properties are crucial for adaptive control applications as they guarantee that the learned dynamics and error bounds accurately converge to the true underlying system dynamics while also providing a characterisation of the long-run performance of the regression method. This in turn ensures that the controllers built on these data-driven frameworks become increasingly more successful the longer the interaction with the underlying plant progresses. Considering the computational advantages of Lipschitz interpolation over Gaussian process regression (\cite{calliess2020lazily}),  deriving analogous asymptotic guarantees for Lipschitz interpolation is therefore strongly desirable and constitutes the main motivation of this paper. Specifically, the following contributions to the literature are made:

\begin{itemize}
    \item \textbf{(Main Result)} In the case of independent input sampling, general consistency and upper bounds on the asymptotic convergence rates are obtained for both the prediction function (Theorem \ref{thm:conv rate}) and the worst-case error bounds (Corollary \ref{cor:FSS}) of the general Lipschitz constant interpolation framework. While convergence lower bounds do not exist for the exact setting considered in this paper signifying  that the optimality of our bounds is not (yet) established, the obtained rates are consistent with the optimal convergence rates for non-parametric regression in related settings; e.g. with the classical convergence rate results derived by (\cite{stone1982optimal}). 
    \item In the case of discrete-time non-linear and noisy dynamical systems, we show that the Lipschitz interpolation framework and worst-case bounds converge point-wise in moments (Corollary \ref{cor:online} and ensuing discussion) and that, under an additional sampling assumption, the convergence rates match the ones derived in the first part of the paper.
    The first result can be directly applied in the context of the existing non-linear controllers discussed above (e.g. \cite{canale2014nonlinear}, \cite{manzano2020robust}) and we provide a simple illustration in the context of online learning-based trajectory tracking control (see Section \ref{sec: online control}).
\item In a general sampling setting, probabilistic consistency is shown (Theorem \ref{thm:lacki}) for the  fully data-driven LACKI (\textit{Lazily Adapted Constant Kinky Inference}) estimator (\citet{calliess2020lazily}) that extends the general Lipschitz interpolation framework by removing the key assumption of prior knowledge of the Lipschitz constant. This result generalises on Theorem 16 of (\cite{calliess2020lazily}) which derives the consistency of the LACKI estimator in the noise-free setting.
\end{itemize}

We note that with the goal of obtaining a precise characterisation of the convergence rates of Lipschitz interpolation methods, we make a non-standard noise assumption (Assumption \ref{ass:noise lip}) utilising the concept of "non-regular" noise (\cite{ibragimov2013statistical}) which describes the behaviour of the tails of the noise distribution in proximity of assumed noise bounds. This type of assumption has been used in recent research on non-parametric boundary regression (see \citet{hall2009nonparametric}, \cite{jirak2014adaptive} and ensuing works) and allows for a better comparison between the convergence rates of Lipschitz interpolation derived in this paper and the ones known for Gaussian process regression and other kernel methods. In fact, the convergence rate bounds obtained in this paper provide an explicit condition on the tail behaviour of the noise that indicates when the Lipschitz interpolation should be expected to asymptotically outperform or underperform
other non-parametric regression paradigms.

\section{Lipschitz Interpolation: Set-up \& Assumptions}
%\subsection{Set-up \& Assumptions}
%As discussed in the introduction, this paper aims to give a theoretical demonstration of the probabilistic consistency properties of classical Lipschitz interpolation methods and of the non-parametric method developed by \cite{cal}. It seeks to show the convergence in probability of the predictors generated by the LACKI rule for both online and offline learning. In this section, we establish the underlying framework used in the theoretical proofs of this paper and give a rigorous definition of the LACKI rule.

Given an input space $\inspace \subset \Real^d$ endowed with a metric $\metric: \inspace^2 \to \Real_{\geq 0}$ and an output space\footnote{Here, it would be possible to extend the analysis done in this paper to a vector-valued output space, i.e. $\outspace \subset \Real^m$ for $m \in \nat$, by applying the obtained results in a component-wise fashion.} $\outspace \subset \Real$ endowed with a metric $\metric_\outspace: \outspace^2 \to \Real_{\geq 0}$, the goal of non-parametric regression is to learn an unknown target function $f:\inspace  \to \outspace$. In this paper, we will assume  that $f$ belongs to the class of $L$-Lipschitz (continuous) functions (with respect to $\metric_\inspace$, $\metric_\outspace$ and some $L\in\Real_+$) which we formally define as:
\begin{equation*}
Lip(L,\metric) :=\{ h: \inspace \to \outspace | \metric_{\outspace}(h(x), h(x')) \leq L \metric(x,x'), \forall x,x' \in \inspace\}.
\end{equation*} 
The smallest non-negative number $L^*$ for which $f$ is $L^*$-Lipschitz is called the \emph{best} Lipschitz constant of $f$, i.e. $L^* = \min\{L \in \Real_{\geq 0} | f \in Lip(L,\metric)\}$.  The functional assumption that the target function $f$ is $L$-Lipschitz continuous for some $L \in \Real_+$ is essential for the application of Lipschitz interpolation frameworks and is standard in the relevant literature (\cite{milanese2004set}, \cite{beliakov2006interpolation}, \cite{calliess2020lazily}). 
%\jcom{What assumption ? above is a definition. Also cite in what literature? I assume you suppose that the best Lip constant is lnown and used as a hyperparameter? Obviously most more recent papers estimate the Lip constant}
%\julcom{Ah sorry. No I was just referring to basic assumption that target function is in fact Lipschitz continuous, i.e. i don't look at how Lipschitz interpolation/kinky inference performs on non-Lipschitz functions. Is the above modification clearer?}

In order to learn $f$, we assume that a sequence of sample sets $(\data_n)_{n \in \mathbb{N}} := (\grid^\inspace_n, G^\outspace_n)_{n \in \mathbb{N}}$ defined such that $\data_n \subset \data_{n+1}$ for $n\in\nat$ is available, where $\grid_n^\inspace := \{s_i | i=1,...,N_n\}\subset \inspace$ represents a set of sample inputs that can be either deterministically or randomly generated and $G^\outspace_n := \{\tilde f_i | i=1,...,N_n\} \subset \outspace$ denotes the set of noise-corrupted values of the target function $f$ associated with the inputs in $\grid_n^\inspace$. Unless stated otherwise, we will also assume that elements of $G^\outspace_n$ are of the form $\tilde f_k= f(s_k)+ e_k$ where $(e_k)_{k \in \nat}$ is a collection of random variables denoting the additive observational noise.

In this paper, we will make the following assumption on the noise:

\begin{ass} \label{ass:noise lip simple}(General noise assumptions) 
The noise variables $(e_k)_{k \in \nat}$ are assumed to be independent and identically\footnote{The identically distributed assumption is made to alleviate notation and is not technically needed in our derivations.} distributed random variables with compact support:
%whose outcomes are 
%which are almost surely confined to an interval $[-\obserrbnd, \obserrbnd] \subset \Real$:
$\exists \obserrbnd>0$ such that $\forall k\in\nat:$ $\pmeas\left(e_k \in [-\obserrbnd, \obserrbnd]\right) =1$. Furthermore, we assume that the bounds of the support are tight in the following sense:
 \newline  There exists $\bar \epsilon > 0$, $\forall k \in \nat,  \epsilon \in (0, \bar \epsilon)$:
    $$\pmeas(e_k > \obserrbnd - \epsilon) > 0 \quad \text{ and } \quad \pmeas(e_k < -\obserrbnd + \epsilon) > 0$$
\end{ass}

In order to derive precise upper bounds on the convergence rates, we will sometimes make an additional noise assumption which describes the behaviour of the noise at the boundary of its support. This assumption is given formally as follows:

\begin{ass} \label{ass:noise lip}(Assumptions on the boundary behaviour of the noise) 
Assume that Assumption \ref{ass:noise lip} holds. We assume that the behaviour of the noise near the bounds of the support can be characterised in the following sense: There exists $\bar \epsilon, \gamma, \eta > 0,\forall k \in \nat,  \epsilon \in (0, \bar \epsilon)$:
    $$\pmeas(e_k > \obserrbnd - \epsilon) > \gamma \epsilon^\eta \quad \text{ and } \quad \pmeas(e_k < -\obserrbnd + \epsilon) > \gamma \epsilon^\eta.$$
\end{ass}

\begin{figure*}[t]
\centering
 \begin{subfigure}[t]{0.25\textwidth}%
\includegraphics[width=\textwidth]{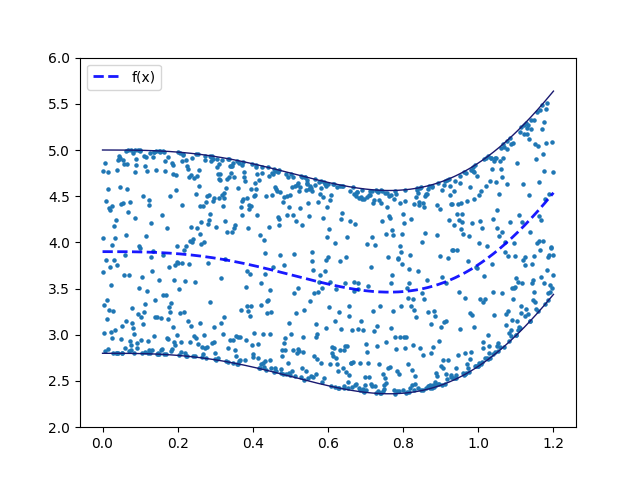}  \caption{$\eta = 0.5$}%
  \end{subfigure}% 
    \begin{subfigure}[t]{0.25\textwidth}%
\includegraphics[width=\textwidth]{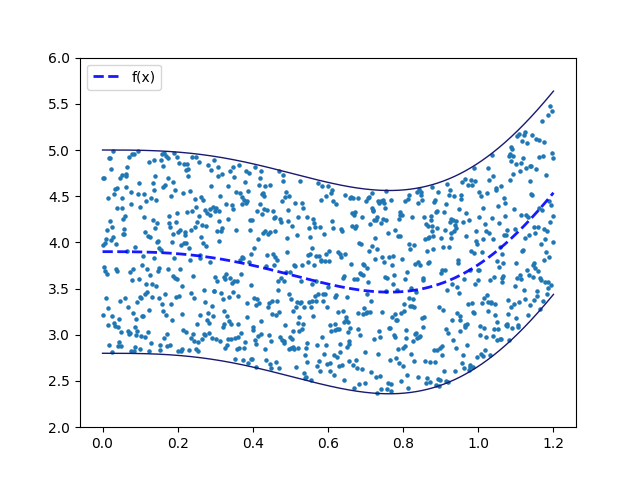} \caption{$\eta = 1$}%
  \end{subfigure}% 
 \begin{subfigure}[t]{0.25\textwidth}% 
	\includegraphics[width=\textwidth]{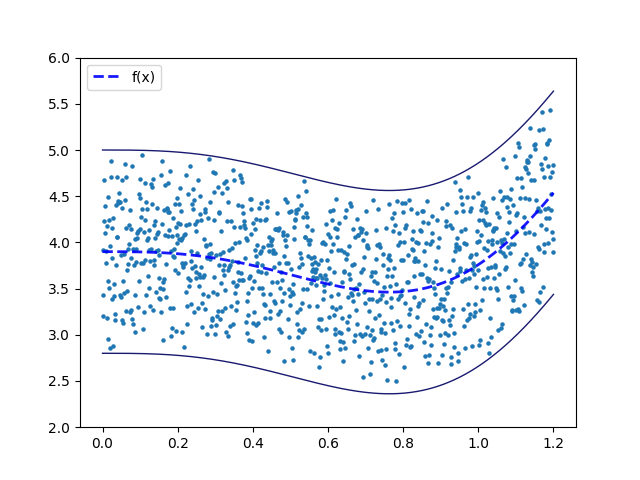}%
  \caption{$\eta = 2$}%
  \end{subfigure}%
    \begin{subfigure}[t]{0.25\textwidth}%
		\includegraphics[width=\textwidth]{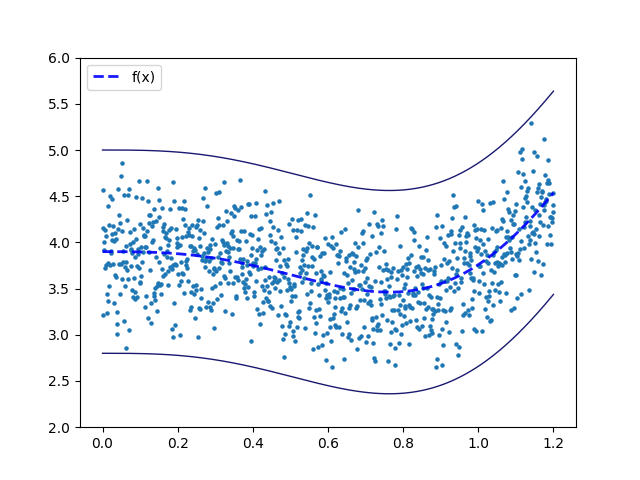}
  \caption{$\eta = 3$}%
  \end{subfigure}%
	\caption[Illustration of the behaviour of the noise variables described by Assumption \ref{ass:noise lip} for various values of $\eta$. ]{ Illustration of Assumption \ref{ass:noise lip} for various $\eta$. The target function is given by $f(x) = -\sin(3x)x^{2} + 5$ and the noise distribution is defined as a mixture of two truncated Weibull distributions. The solid lines define the error bounds of the observed data, i.e. (with abuse of notation) $f \pm \obserrbnd$.
	}	\label{Fig: lcls best}%
\end{figure*}
\begin{ex}(Noise distributions)\label{example:noise} For $\eta = 1$, commonly used noise distributions with bounded support such as the uniform or the truncated Gaussian distributions satisfy Assumption \ref{ass:noise lip}. More generally, any noise distribution for which the density can be bounded away from zero on a bounded symmetric support satisfies the assumption with $\eta = 1$.
\end{ex}
The assumption of boundedness of the support of the noise distribution given in Assumption \ref{ass:noise lip simple} is standard in the Lipschitz interpolation literature (e.g. see \cite{milanese2004set}, \cite{calliess2020lazily}) as it ensures that the functions $\decke_n, \boden_n$ defined in Definition \ref{def:KILsimplified} are generally well-behaved. By contrast, as noted in the introduction of this paper, the assumption on the tail of the noise distribution stated in Assumption \ref{ass:noise lip} is non-standard\footnote{But as shown in Example \ref{example:noise}, many standard noise distributions arise as special cases.} in the literature. While this assumption will be not needed to ensure the asymptotic consistency of Lipschitz interpolation frameworks, the precise characterisation of the bounded tail of the noise distribution as a function of $\gamma$ and $\eta$ given in Assumption \ref{ass:noise lip} makes it possible to derive a more refined convergence rate result that depends on $\eta$. % for Lipschitz interpolation.

The general Lipschitz interpolation framework considered in this paper is defined as follows:
\begin{defn}(Lipschitz interpolation) \label{def:KILsimplified}
    Using the set-up defined above, we define the sequence of predictors $( \hat f_n)_{n \in \nat}$, $\hat f_n: \inspace \to \outspace $  associated to $(\data_n)_{n \in \nat}$, as
   	\begin{equation}
	\predfn\bigl(x) := \frac{1}{2} \decke_n(x) + \frac{1}{2}  \boden_n(x), \label{eq:KIpred_basic}
	\end{equation}
	where $\decke_n, \boden_n : \inspace \to \outspace $ are defined as
	\begin{align*}
	    &\decke_n(x) = \min_{i=1,...,N_n}\tilde f_i + L \metric(x,s_i)  \\ 
	    &\boden_n(x) = \max_{i=1,...,N_n}\tilde f_i - L \metric(x,s_i)  
     \end{align*}
 and $ L \in \Real_{\geq 0}$ is a selected hyper-parameter. 
\end{defn}
Ideally, the hyper-parameter $L \in \Real_{\geq 0}$ can be set to be larger than the best Lipschitz constant $L^*$ of the unknown target function. In this case, existing finite sample and worst-case guarantees can be utilised. 

\begin{rem}\label{rem:alt}(Alternative Formulation) In some works (see in particular \cite{milanese2004set}, \cite{calliess2020lazily}) an alternative formulation is given for the Lispchitz interpolation predictors. In this case, the bounds ($\obserrbnd$) on the noise distribution are assumed known and are explicitly used in the formulation of $\tilde \decke_n, \tilde \boden_n : \inspace \to \outspace \nonumber$:
\begin{align*}
	    & \tilde \decke_n (x) = \min_{i=1,...,N_n}\tilde f_i + L \metric(x,s_i) + \obserrbnd \color{black} \\ 
	    & \tilde \boden_n(x) = \max_{i=1,...,N_n}\tilde f_i - L \metric(x,s_i) - \obserrbnd \color{black}. 
\end{align*}
This formulation is useful for computing tight worst-case upper and lower bound guarantees in practice and can be used in the context of this paper to weaken Assumption \ref{ass:noise lip simple} by considering asymmetric noise bounds, i.e. $ e \in [\obserrbnd_1, \obserrbnd_2]$ with probability $1$ where $\obserrbnd_1< 0< \obserrbnd_2 \in \Real$. All results derived in this paper can be shown to hold for the alternative Lipschitz interpolation formulation. 
\end{rem}

\begin{figure*}[t]
\centering
\includegraphics[width=0.49\textwidth]{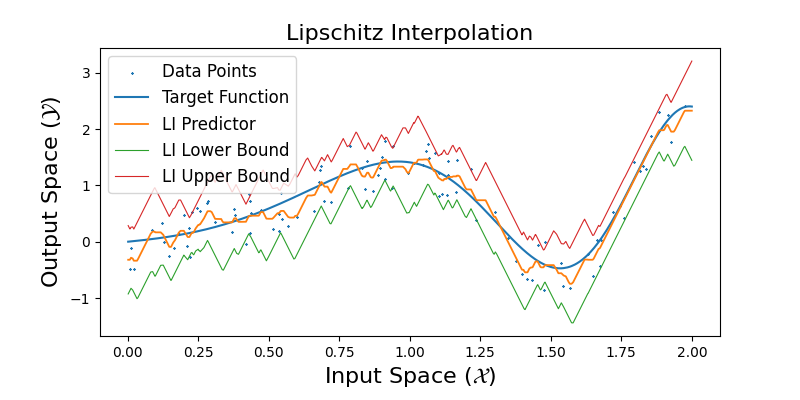}  
\includegraphics[width=0.49\textwidth]{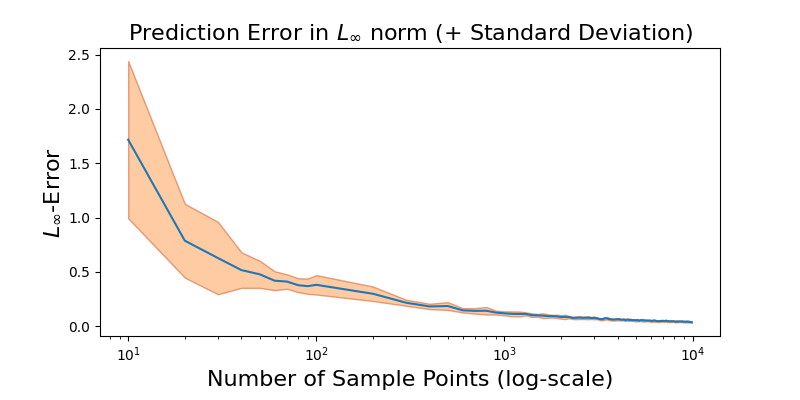}  
	\caption[Illustration of the consistency of Lipschitz interpolation]{ Illustration of 
 of the consistency of Lipschitz interpolation for the target function: $f(x) = \sqrt{x} \sin(2x^2) + 0.5x$ on the input space $\inspace = [0,2]$, with uniform sampling on $\inspace$ and with independent uniform noise: $U([-0.5,0.5])$ on the observations ($\eta=1$). The Lipschitz interpolation plotted in the lefthand figure utilised 100 samples and assumed that a bound of ($\obserrbnd'=0.7$) on the noise bound ($\obserrbnd=0.5$) was known in order to compute the lower and upper bounds (the LI predictors can be constructed without knowledge of $\obserrbnd'$). The convergence rate and standard deviation plotted on the righthand figure were obtained by running the experiment independently 20 times. Both plots assumed that access to a bound on the best Lipschitz constant was known in order to apply the Lipschitz interpolation framework.	\label{Fig: LI simple}}%
\end{figure*}

As the description of the input and output metrics has been general so far, we make the following simplifying assumption on the output metric in order to obtain our theoretical results.
\begin{ass} \label{ass:outspace norm}(Assumption on $\metric_\outspace$).
In this paper we will restrict ourselves to the case, $\metric_\outspace(y,y') = \norm{y-y'}_\outspace$, $\forall y, y' \in \mathcal{Y}$ where $\norm{.}_\outspace$ is a norm on $\outspace$. It will therefore be sufficient to derive our asymptotic results for the case: $\norm{.}_\outspace =|.|$ as discussed below.
\end{ass}

As the norms on $\outspace\subset \Real$ are of the form $\norm{y-y'} = c|y-y'|$ $\forall y, y' \in \mathcal{Y}$ for some $c>0$, it is sufficient to consider the case $\norm{y-y'}_\outspace = |y-y'|$, $\forall y, y' \in \mathcal{Y}$ in order to achieve our theoretical results. Assumption \ref{ass:outspace norm} is necessary in order to ensure that for arbitrary $ x,x' \in \inspace$, the relations: $f(x) \leq f(x') + \frac{L}{c}\metric(x,x')$ and $f(x) - \frac{L}{c}\metric(x,x') \leq f(x')$ hold. In particular, for any sub-linear metric $\metric_\outspace$, these inequalities no longer hold. We note however that no restrictions are made on the input metric $\metric$. 

\section{Asymptotic Consistency and Convergence Rates}

In order for our consistency results to hold for both random and deterministic sampling approaches, we recall  \textit{Definition 8}: \textit{"Becoming dense, rates, $\stackrel{r}{\convto}, \stackrel{r}{\bd}, \stackrel{r}{\bdu}$"} of (\cite{calliess2020lazily}) to define general sampling conditions for $(G_n)_{n \in \nat}$. 

\begin{defn}\label{def: unif dense}(Uniformly dense sampling)
We say that the sequence of sets of sample inputs $(G_n)_{n\in\nat}$ becomes uniformly dense relative to $\inspace$ at a rate $r$ (denoted by $(G_n) \stackrel{r}{\bdu} \inspace$) if $\exists r:\nat \to \Real_+$ such that $\lim_{n\to \infty} r(n) = 0$ and $\forall n\in\nat$, $\sup_{x \in \inspace} \inf_{s_n \in G_n} \metric(s_n,x) \leq r(n)$.
\end{defn}

Using this definition, we can provide the following asymptotic guarantee for the general Lipschitz interpolation method.

\begin{thm} \label{lemma:general}
Suppose Assumptions \ref{ass:noise lip simple} and \ref{ass:outspace norm} hold, $\inspace$ is bounded
and the target function $f\in Lip(L^*,\metric)$\footnote{unless specified otherwise, Lipschitz continuity will be assumed to be w.r.t. the metrics $\metric$,$\metric_\outspace$ on the spaces $\inspace, \outspace$.} with best Lipschitz constant $L^* \in \Real_+$. If the sampling set sequence $(\data_n)_{n \in \nat}$ has sample inputs  $(G_n^\inspace)_{n \in \nat}$ such that $\exists r \in o(1) : (G_n^\inspace) \stackrel{r}{\bdu} \inspace$ and the sequence of predictors $(\hat f_n)_{n \in \nat}$ are computed by a general Lipschitz interpolation framework with a hyperparameter $L \in \Real_{\geq 0}$ set such that $L \geq L^*$ then we have: 
$$\forall \epsilon >0 \text{, } \lim_{n \to \infty}\pmeas \left( \sup_{x\in \inspace}\metric_\outspace(\hat f_n(x), f(x)) > \epsilon \right) = 0 .$$
%where $ (\hat f_n)_{n \in \nat}$ denotes the sequence of predictors computed by the general Lipschitz interpolation framework when the hyperparameter $L \in \Real_{\geq 0}$ is set such that $L \geq L^*$.
\end{thm}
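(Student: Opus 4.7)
The plan is to control the two envelope functions $\decke_n$ and $\boden_n$ separately and show that they concentrate, respectively, around $f-\obserrbnd$ and $f+\obserrbnd$, so that their average $\predfn$ concentrates around $f$. First I would establish one-sided deterministic bounds: for every $x \in \inspace$ and every index $i$, the Lipschitz property of $f$ with $L^* \leq L$ together with boundedness of the noise gives $\tilde f_i + L\metric(x,s_i) = f(s_i) + e_i + L\metric(x,s_i) \geq f(x)+e_i \geq f(x) - \obserrbnd$; taking the minimum over $i$ yields $\decke_n(x) \geq f(x) - \obserrbnd$, and an identical argument gives $\boden_n(x) \leq f(x) + \obserrbnd$ almost surely. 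By Assumption \ref{ass:outspace norm} it suffices to treat $\metric_\outspace$ as $|\cdot|$ throughout.

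Next I would derive matching stochastic bounds from the other side. Choose a scale $\delta_n \to 0$ with $\delta_n / r(n) \to \infty$ (possible since $r(n) \to 0$), and define $\indset_n(x) := \{i : \metric(s_i,x) \leq \delta_n\}$. A packing argument on the bounded set $\inspace$ together with the uniform density hypothesis shows $|\indset_n(x)| \to \infty$ uniformly in $x$. Lipschitz continuity of $f$ then gives, for each $i \in \indset_n(x)$, $\tilde f_i + L\metric(x,s_i) \leq f(x) + 2L\delta_n + e_i$, whence
\begin{equation*}
\decke_n(x) \leq f(x) + 2L\delta_n + \min_{i\in\indset_n(x)} e_i,
\end{equation*}
and symmetrically $\boden_n(x) \geq f(x) - 2L\delta_n + \max_{i\in\indset_n(x)} e_i$. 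Conditioning on the samples, the variables $\{e_i : i \in \indset_n(x)\}$ are i.i.d.\ with tight support by Assumption \ref{ass:noise lip simple}, so for any $\alpha > 0$, $\pmeas\bigl(\min_{i\in\indset_n(x)} e_i \geq -\obserrbnd + \alpha\bigr) \leq (1-p)^{|\indset_n(x)|}$ with $p := \pmeas(e_1 < -\obserrbnd+\alpha) > 0$, and this vanishes uniformly in $x$. Combined with the deterministic lower bound from the first step, this gives $\decke_n(x) - (f(x) - \obserrbnd) \to 0$ in probability uniformly in $x$, and symmetrically for $\boden_n$; averaging yields pointwise convergence $\predfn(x) \to f(x)$ in probability, uniform in $x$.

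To upgrade this pointwise statement to the uniform-over-$\inspace$ convergence required by the theorem, I would use that $\decke_n, \boden_n$ and hence $\predfn$ are $L$-Lipschitz (as min/max of $L$-Lipschitz maps $x \mapsto \tilde f_i \pm L\metric(x,s_i)$), so $\predfn - f$ is $2L$-Lipschitz. Given $\epsilon > 0$, a finite $(\epsilon/(4L))$-net $\{x_1,\ldots,x_M\}$ of the bounded set $\inspace$ reduces the event $\{\sup_x |\predfn(x) - f(x)| > \epsilon\}$ to $\{\max_{j\leq M} |\predfn(x_j) - f(x_j)| > \epsilon/2\}$, which is handled by a union bound over the $M$ pointwise convergences established above.

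The main obstacle I anticipate is the uniform-in-$x$ packing bound on $|\indset_n(x)|$: this is where boundedness of $\inspace$, the choice of $\delta_n$, and any regularity of the metric $\metric$ (together with the shape of $\inspace$ near its boundary) interact, and one must verify that the $2r(n)$-packing of a subball of $B(x,\delta_n)$ yields disjoint $r(n)$-neighborhoods each forced by uniform density to contain a sample. Everything else — the Lipschitz bookkeeping, the exponential tail of the minimum of i.i.d.\ bounded noise, and the net argument for uniformity — is essentially routine once this packing statement is secured.
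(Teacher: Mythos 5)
Your argument is correct and rests on the same two pillars as the paper's proof — a covering of $\inspace$ at a scale tied to the mesh $r(n)$, and the fact that the local extremes of i.i.d.\ noise with tight support concentrate at $\pm\obserrbnd$ at an exponential rate in the number of local samples — but the bookkeeping is organized differently, in a way that is arguably cleaner. The paper bounds $|\predfn(x)-f(x)|$ directly by terms of the form $\max_{i\le N_n}e_i+\min_{s_i\in B^x}e(s_i)+2(L^*+L)R$, works with a single fixed covering of radius $R=\epsilon/(4(L^*+L))$, and obtains uniformity by taking a maximum over the covering balls and factorizing the resulting joint event into a product over balls. You instead control the two envelopes separately: the deterministic one-sided bounds $\decke_n\ge f-\obserrbnd$, $\boden_n\le f+\obserrbnd$ plus the stochastic bounds via $\min_{i\in\indset_n(x)}e_i$ and $\max_{i\in\indset_n(x)}e_i$ pin each envelope to $f\mp\obserrbnd$, making the cancellation of $\pm\obserrbnd$ in the average explicit; you use a shrinking window $\delta_n$ with $\delta_n/r(n)\to\infty$ rather than a fixed $\epsilon$-dependent radius; and you upgrade pointwise to uniform convergence via the $L$-Lipschitz continuity of $\decke_n,\boden_n$ and a union bound over a finite net, a device the paper does not use. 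What your route buys is a transparent separation of the deterministic and stochastic parts of the error and a uniformity step that is immune to the dependence structure across covering balls (the paper's product bound needs the extra justification it supplies in $(\star\star)$ of the convergence-rate proof); what it costs is the extra packing argument needed to show $|\indset_n(x)|\to\infty$ uniformly in $x$.

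On that last point, note that the delicate step you flag — forcing many distinct samples into every $\delta_n$-ball — is exactly the step the paper also takes on faith when it asserts $\lim_n L_n=\lim_n N_n=+\infty$ from uniform density alone; for a genuinely arbitrary bounded $\inspace$ and metric $\metric$ (e.g.\ an $\inspace$ with an isolated point) this can fail for both arguments, so it is a shared implicit regularity requirement on $\inspace$ rather than a gap specific to your proof. One further small observation: your net argument needs a finite $\metric$-net of $\inspace$, which does not follow from boundedness of $\inspace\subset\Real^d$ for an arbitrary metric, but it does follow from the uniform-density hypothesis itself, since $G_n^\inspace$ is a finite $r(n)$-net and $r(n)\to 0$; it is worth saying this explicitly.
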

Before providing the proof of Theorem \ref{lemma:general}, we recall the notion of $\epsilon$-covering that will be used in multiple proofs of this paper.
\begin{defn}\label{def: cover}($\epsilon$-Cover) Let $d \in \nat$, $\epsilon>0$ and consider a set $\inspace \subset \Real^d$ and a metric $\metric$ on $\Real^d$. Denoting $B_\epsilon(x)$ the ball of radius $\epsilon$ centred in $x \in \inspace$ with respect to $\metric$, we define an $\epsilon$-cover of $\inspace$ as a subset $Cov(\epsilon)\subset \Real^d$ such that 
$\inspace \subset \bigcup_{x \in Cov(\epsilon)} B_\epsilon(x)$ and the associated set of balls as $\mathcal{B}:= \{B_\epsilon(x) | x \in Cov(\epsilon) \}$. We say furthermore that $ Cov(\epsilon)$ is a $\epsilon$-minimal cover of $\inspace$ if $|Cov(\epsilon)| = min\{n:\exists \epsilon\text{-covering over \inspace of size n}\}.$ 
\end{defn}

\begin{proof}
We begin by establishing a general bound on $\metric_\outspace(\hat f_n(x), f(x))$, $\forall n \in \nat$, $\forall x \in \inspace$. For any $x \in \inspace$ we have:
%\footnote{As discussed in the previous section, we can consider $\norm{.}_\mathcal{Y} = |.|$.}:
\begin{align*}
    & \metric_\outspace(\hat f_n(x), f(x)) =|\hat f_n(x) - f(x)| \\
    & =  \left| \frac{1}{2} \min_{i=1,...,N_n} \{ \tilde f_i + L\metric(x,s_i)\}  +  \frac{1}{2}\max_{i=1,...,N_n} \{ \tilde f_i  - L\metric(x,s_i)\} - f(x) \right| \\
    & =  \left| \frac{1}{2} \min_{i=1,...,N_n} \{ \tilde f_i - f(x) + L\metric(x,s_i)\}  +  \frac{1}{2}\max_{i=1,...,N_n} \{ \tilde f_i - f(x) - L\metric(x,s_i)\} \right|.
\end{align*}
Using the Lipschitz continuity of $f$
, we obtain the following set of inequality relations for the two terms stated above:
\begin{align*}
& (1)     \min_{i=1,...,N_n}\{e_i\} \leq \min_{i=1,...,N_n} \left\{ \tilde f_i - f(x) + L\metric(x,s_i) \right\} \nonumber \leq  \min_{i=1,...,N_n}\{e_i+ (L^*+L)\metric(x,s_i) \}.
\\
& (2)
    \max_{i=1,...,N_n} \left\{e_i-(L^*+L)\metric(x,s_i) \right\}
    \leq  \max_{i=1,...,N_n} \left\{ \tilde f_i-f(x) - L\metric(x,s_i) \right\} \leq  \max_{i=1,...,N_n}\{e_i\}. \nonumber
\end{align*}
In combination, we see that
\begin{align*}
& |\hat f_n(x)- f(x)| \\
& \leq \frac{1}{2} \max \Big\{  \underbrace{\max_{i=1,...,N_n}\{e_i\}+
\min_{i=1,...,N_n} \left\{e_i 
+(L^*+L) \metric(x,s_i) \right\} }_{(I)}  ,
\\ 
& \underbrace{-\min_{i=1,...,N_n}\{e_i\} - \max\limits_{i=1,...,N_n} \left\{ e_i  -(L^*+L)\metric(x,s_i) \right\}}_{(II)} \Big\} .
\end{align*}
$(I), (II)$ can then be bounded using the assumption of uniform convergence of the grid (see Definition \ref{def: unif dense}). 
%This implies that $\forall x \in \inspace \text{ } \exists \text{ a subsequence }$ $(q^x_n)_{n \in \nat}$ such that $ \forall n \in \nat$, $q^x_n \in G_n^\inspace$ and $\metric(x,q^x_n) \leq r(n)$. 
%
Define $R:=\frac{\epsilon}{4(L^*+L)}$ and consider the minimal covering of $\inspace$ of radius $R$ with respect to $\metric$ that we denote $Cov(R)$ and the associated set of hyperballs $\mathcal{B}$. By uniform convergence of the sample inputs, there exists $M\in\nat$ such that $\forall n>M$: $\forall B \in \mathcal{B}$, $|B\cap G_n^\inspace| > 0$. Then, the following upper bound holds for $(I)$ with $n>M$ ($(II)$ can be bounded in a similar way):
%Let  $M\in\nat$ such that $\forall n>M$, $r(n) < K:= \frac{\epsilon}{2(L^*+L)}$.
%\julcom{as $G_n^\inspace \subset G_{n+1}$,  $(q^x_n)_{n \in \nat}$ can be the same input multiple times.}.
\begin{align*}
    & \max_{i=1,...,N_n}\{e_i\}+
\min_{i=1,...,N_n}\{e_i
+(L^*+L) \metric(x,s_i) \}   \nonumber\\
    \leq & \max_{i=1,...,N_n}\{e_i\}+
\min_{s_i \in B^x \cap G_n^\inspace }\{e(s_i)
+(L^*+L) \metric(x,s_i)  \} \\
    \leq &  \max_{i=1,...,N_n}\{e_i\} + \min_{s_i \in B^x \cap G_n^\inspace} \left \{e(s_i) + 2(L^*+L)R \right\}
\end{align*}
where with abuse of notation, $e(s_i)$ denotes the noise term associated with the input $s_i$ and $B^x$ denotes a hyperball $B \in \mathcal{B}$ such that $x\in B$. Similarly for $(II)$, we obtain 
$$
(II) \leq    \max_{i=1,...,N_n}\{-e_i\} + \min_{s_i \in B^x \cap G_n^\inspace}\{-e(s_i) + 2(L^*+L)R\}.
$$
Let $\epsilon>0$.
Utilising these bounds, $\forall n>M$, we obtain
\begin{align*}
   & \pmeas (\sup_{x \in \inspace}\metric_\outspace(\hat f_n(x), f(x)) > \epsilon)   
   \\
   &
   \leq \pmeas \Big( 2(L^*+L)R 
   +
   \frac{1}{2} \sup_{x \in \inspace}\max \Big\{ \max_{i=1,...,N_n}\{e_i\} 
 + 
\min_{s_i \in B^x \cap G_n^\inspace }\{e(s_i)\}, \\
&
 - \min_{i=1,...,N_n}\{e_i\} - \max_{s_i \in B^x \cap G_n^\inspace }\{e(s_i)\} \Big\} > \epsilon \Big)
\\
 & \leq \pmeas \Big( 
   \frac{1}{2} \max_{B \in \mathcal{B}}\max\Big\{ \max_{i=1,...,N_n}\{e_i\} 
 + 
\min_{s_i \in B \cap G_n^\inspace }\{e(s_i)\}, \\
&
 -\min_{i=1,...,N_n}\{e_i\} - \max_{s_i \in B \cap G_n^\inspace }\{e(s_i)\} \Big\} > \epsilon - 2(L^*+L)R \Big)
\\
& 
\leq  \pmeas \left(\max_{B \in \mathcal{B}} \max_{i=1,...,N_n}\{e_i\} 
 + \min_{s_i \in B \cap G_n^\inspace}\{e(s_i)\}> \frac{\epsilon}{2} \right) 
 \\
& + \pmeas\left( \max_{B \in \mathcal{B}} \max_{i=1,...,N_n}\{- e_i\} + \min_{s_i \in B \cap G_n^\inspace}\{- e(s_i)\} \} > \frac{\epsilon}{2} \right) 
\end{align*}
where the last inequality follows by definition of $R$.
Both probability terms stated above can be shown to converge to $0$ as follows:
$$
\pmeas \left(\max_{B \in \mathcal{B}} \max_{i=1,...,N_n}\{e_i\}  + \min_{s_i \in B \cap G_n^\inspace}\{e(s_i)\}> \frac{\epsilon}{2} \right)
 $$
 $$
 = 1 - \pmeas \left(\max_{B \in \mathcal{B}} \max_{i=1,...,N_n}\{e_i\} 
 + \min_{s_i \in B \cap G_n^\inspace}\{e(s_i)\} \leq  \frac{\epsilon}{2} \right)
$$
 $$
 = 1 - \pmeas \left( \forall B \in \mathcal{B}, \max_{i=1,...,N_n}\{e_i\} 
 + \min_{s_i \in B \cap G_n^\inspace}\{e(s_i)\} \leq \frac{\epsilon}{2} \right)
$$
 $$
 \leq 1 - \pmeas \left( \forall B \in \mathcal{B}: \max_{i=1,...,N_n}\{e_i\} \in I_1, \min_{s_i \in B \cap G_n^\inspace}\{e(s_i)\} \in I_2 \right).
$$
Here $I_1 := [\obserrbnd - \frac{\epsilon}{4}, \obserrbnd]$ and $I_2 := [- \obserrbnd , -\obserrbnd + \frac{\epsilon}{4} ]$. Applying a similar argument to the one given in $(\star\star)$ in the proof of Theorem \ref{thm:conv rate}, we have that the last term is upper bounded by 
$$
  1 - \prod_{B \in \mathcal{B}} \pmeas \left( \max_{i=1,...,N_n}\{e_i\} \in I_1, \min_{s_i \in B \cap G_n^\inspace}\{e(s_i)\} \in I_2 \right)
$$
$$
\leq 1 - \pmeas \left( \max_{i=1,...,N_n}\{e_i\} \in I_1, \min_{i=1,...,L_n}\{e_i\} \in I_2 \right)^{|\mathcal{B}|}
$$
where $L_n:= \min_{B\in\mathcal{B}} |B \cap G_n^\inspace|$ and $|.|$ is used to denote the cardinality operator for finite sets. By the uniformity of the convergence of the sample inputs, we have that $\lim_{n\to\infty}L_n = \lim_{n\to\infty}N_n =+\infty$. Using basic identities of probability theory and  applying Assumption \ref{ass:noise lip simple}, we have that 
$$\lim_{n\to\infty} \pmeas \left( \max_{i=1,...,N_n}\{e_i\} \in I_1, \min_{i=1,...,L_n}\{e_i\} \in I_2 \right) = 1$$
which implies that 
$$\lim_{n \to \infty}\pmeas \left( \sup_{x\in \inspace}\metric_\outspace(\hat f_n(x), f(x) ) > \epsilon \right) = 0$$
and concludes the proof.
\end{proof}

Theorem \ref{lemma:general} ensures that the classical Lipschitz interpolation method is asymptotically consistent for a general selection of input metrics. Furthermore, a similar result for Lipschitz interpolation with a multi-dimensional output setting $\outspace \subset R^m$ for $m\in\nat$ follows naturally by applying Theorem \ref{lemma:general} to each output component function (the noise assumption would need to be modified in this case; e.g. see Assumption \ref{ass:noise lip2}).

In general, we are mostly interested in simple metric choices for $\metric$. In this case with additional assumptions on $\inspace$ and $\outspace$, we can extend the result obtained in Theorem \ref{lemma:general} by deriving asymptotic rates of convergence for the general Lipschitz interpolation method. More precisely, we have the following definition (\cite{gyorfi2002distribution}):

\begin{defn} Consider a sequence of non-parametric predictors $( \hat f_n)_{n \in \nat}$ and a class of functions $\mathcal{C}$ endowed with a norm $\norm{.}$. Let $(a_n)_{n \in \nat}$ be a sequence of positive constants in $\Real$. We define $(a_n)_{n \in \nat}$ as the rate of convergence of $( \hat f_n)_{n \in \nat}$ on $\mathcal{C}$ with respect to $\norm{.}$ if there exists $c > 0$ such that
$$ 
\limsup_{n \to \infty}\sup_{f \in \mathcal{C}} \mathbb{E} \left[a_n^{-1}\norm{\hat f_n - f} \right] = c < \infty.
$$
%$$\lim_{n \to \infty}\sup_{f \in \mathcal{C}}\mathbb{E}_f[\norm{\hat f_n - f}] a_n^{-1} = 0$$
\end{defn}

In order to avoid extreme cases of compact spaces, the following general assumption provides a light geometric assumption on $\inspace$.
\begin{ass}\label{ass: input space2}(Geometric Assumption on $\inspace$) Let $\inspace\subset \Real^d$ be compact and convex. There exist two constants $r_0>0, \theta \in(0,1]$ such that  $\forall x \in \mathcal{X}$, $r \in\left(0, r_0\right): \newline \operatorname{vol}\left(B_r(x) \cap \mathcal{X}\right) \geq \theta \operatorname{vol}\left(B_r(x)\right) $.
\end{ass}
 Assumption \ref{ass: input space2} has been used in the learning theory literature (e.g. see \cite{hu2020smooth} \cite{bachoc2021instance}) and ensures that for all $x\in\inspace$, a constant fraction of  ball with a sufficiently small radius and centred in $x$ is contained in $\inspace$. For example, if $\inspace$ is a the unit hypercube then Assumption \ref{ass: input space2} holds with $r_0 = 1, \theta = 2^{-d}$. 

The additional assumptions on the sampling of the sample inputs $(D_{n})_{n\in\nat}$ and metric of the input space $\metric$ are relatively standard and are given as follows:

\begin{ass}\label{ass:sampling}(Assumption on Sampling) $(G_n^\inspace)_{n \in \nat}$ is a randomly sampled sequence on $\inspace$ with a sampling distribution density that is bounded away from zero on $\inspace$. 
\end{ass}
\begin{ass}\label{ass:inspace norm}(H\"older Condition) We restrict the input space metrics under consideration to be of the form $\metric(x,y) = \norm{x-y}_p^\alpha$
where $\alpha \in (0,1]$ and $\norm{.}_p$ denotes the usual $p$-norm on $\Real^d$ with $p\in\nat \cup \{+\infty\}$.
\end{ass}

\begin{thm}\label{thm:conv rate}
Consider an input space $\inspace\subset \Real^d$ that satisfies Assumption \ref{ass: input space2}, an output space $\outspace \subset \Real$ and the function space $\mathcal{C} = Lip(L^*,\metric)$ with $L^* \in \Real_{\geq 0}$ endowed with the supremum-norm: $\norm{h}_\infty = \sup_{x \in \inspace}\norm{h(x)}_\mathcal{Y}$. Assume that Assumptions \ref{ass:noise lip simple}, \ref{ass:noise lip}, \ref{ass:sampling}, \ref{ass:inspace norm} with $\alpha \in (0,1]$, $p\in \nat$ hold.
Then, any sequence of predictors $(\hat f_n)_{n \in \nat}$  generated by the general Lipschitz interpolation framework set with a hyperparameter $L \geq L^*$ achieves a rate of convergence of at least $(a_n)_{n\in\nat} := \left((n^{-1}log(n) )^\frac{\alpha}{d+\eta\alpha} \right)_{n \in \nat}$ with respect to $\norm{.}_\infty$, i.e.
$$ 
\limsup_{n \to \infty}\sup_{f \in Lip(L^*,\metric)} \mathbb{E}\left[a_n^{-1}\norm{\hat f_n - f}_\infty \right] < \infty.
$$
\end{thm}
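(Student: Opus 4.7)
The plan is to sharpen the bound obtained inside the proof of Theorem~\ref{lemma:general}, namely
\[
|\hat f_n(x)-f(x)| \leq \tfrac{1}{2}\max\{(I),(II)\},
\]
where $(I)$ and $(II)$ split into a covering-radius term $2(L^*+L)R$ and extreme values of the noise taken (i) over all observations and (ii) over those that fall into a cover cell containing $x$. Once one chooses a Euclidean radius $h_n$ for the cover and exploits Assumption~\ref{ass:noise lip} to control how quickly $\min_B e(s_i)$ approaches $-\obserrbnd$ and $\max_B e(s_i)$ approaches $+\obserrbnd$, the total error decomposes into a geometric bias of order $h_n^\alpha$ plus a noise fluctuation of order $(nh_n^d)^{-1/\eta}$; balancing the two, with a logarithmic correction coming from a union bound over cover cells, yields precisely the rate $a_n$.

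I would begin by fixing a minimal Euclidean $p$-norm cover of $\inspace$ of radius $h_n$, of cardinality $N(h_n)=O(h_n^{-d})$, where each cell $B$ satisfies $\mathrm{vol}(B\cap\inspace)\geq\theta h_n^d$ by Assumption~\ref{ass: input space2}. Because the sampling density is bounded below on $\inspace$ (Assumption~\ref{ass:sampling}), a Chernoff bound for binomial counts together with a union bound over the cover shows that, with probability $\geq 1 - C_1 h_n^{-d}\exp(-c_2 nh_n^d)$, every cell $B$ contains at least $L_n:=\lfloor c_3 nh_n^d\rfloor$ sample inputs; call this event $\mathcal{A}_n$. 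Conditional on $\mathcal{A}_n$, and using that the noise is independent of the inputs, Assumption~\ref{ass:noise lip} gives, for each fixed cell $B$ and $\epsilon\in(0,\bar\epsilon)$,
\[
\Pr\bigl(\min_{s_i\in B\cap G_n^\inspace}e(s_i) > -\obserrbnd+\epsilon\bigr)\leq (1-\gamma\epsilon^\eta)^{L_n}\leq\exp(-\gamma\epsilon^\eta L_n),
\]
and a symmetric bound for $\max_{s_i\in B}e(s_i) < \obserrbnd-\epsilon$. A union bound over the $N(h_n)$ cells defines an event $\mathcal{E}_n$ of probability $\geq 1 - 2N(h_n)\exp(-\gamma\epsilon^\eta L_n)$ on which every cell simultaneously exhibits a near-minimal and a near-maximal noise realisation. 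On $\mathcal{A}_n\cap\mathcal{E}_n$, the argument deriving $(I)$ and $(II)$ in the proof of Theorem~\ref{lemma:general}, combined with $\max_i e_i\leq\obserrbnd$, $\min_i e_i\geq-\obserrbnd$ deterministically and $\metric(x,s_i)=\|x-s_i\|_p^\alpha\leq(2h_n)^\alpha$ inside a cell, yields
\[
\sup_{x\in\inspace}|\hat f_n(x)-f(x)| \leq \tfrac{1}{2}\epsilon + (L^*+L)(2h_n)^\alpha.
\]

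To finish, I would balance the two terms by choosing $\epsilon=h_n^\alpha$ and then setting $h_n=(C\log n/n)^{1/(d+\eta\alpha)}$ so that $nh_n^{d+\eta\alpha}=C\log n$. For $C$ large enough (depending on $d,\gamma,c_3$), both $h_n^{-d}\exp(-c_2 nh_n^d)$ and $N(h_n)\exp(-\gamma\epsilon^\eta L_n)$ are dominated by $n^{-k}$ for any prescribed $k$, and on $\mathcal{A}_n\cap\mathcal{E}_n$ we obtain $\|\hat f_n-f\|_\infty \lesssim a_n$. Converting this high-probability statement into an expectation bound uses the deterministic estimate $\|\hat f_n-f\|_\infty\leq\obserrbnd+(L^*+L)\,\text{diam}_\metric(\inspace)=:M$, which holds uniformly in $f\in Lip(L^*,\metric)$ because it depends on $f$ only through $L^*$, giving $\mathbb{E}\|\hat f_n-f\|_\infty \leq C' a_n + M\Pr((\mathcal{A}_n\cap\mathcal{E}_n)^c) = C'a_n + o(a_n)$ and hence the claimed uniform finite $\limsup$. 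I expect the principal obstacle to be tuning the constant $C$ so that the two union bounds (over cover cells for dense sampling and for extreme-value concentration) are absorbed cleanly into the $\log n$ factor of $a_n$ rather than degrading the polynomial exponent; a secondary subtlety is that Assumption~\ref{ass:noise lip} supplies the lower tail bound only for $\epsilon<\bar\epsilon$, but this restriction is vacuous once $n$ is large enough that $h_n^\alpha<\bar\epsilon$ and therefore does not affect the asymptotic rate.
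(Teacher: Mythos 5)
Your proposal is correct and rests on the same core decomposition as the paper's proof: a cover of $\inspace$ at the critical scale $(n^{-1}\log n)^{1/(d+\eta\alpha)}$ (the paper takes $\metric$-radius $R_n=a_n$, which is exactly your Euclidean $h_n$ up to the constant $C$), a binomial lower bound on the per-cell sample count, extreme-value concentration of the bounded noise within each cell via Assumption \ref{ass:noise lip}, and the uniform deterministic bound $\norm{\hat f_n-f}_\infty\leq 2\obserrbnd+2L\,\delta_\metric(\inspace)$ to absorb the bad event. Where you genuinely diverge is in converting the probabilistic statement into the expectation bound. The paper keeps the covering radius pinned at $a_n$ with no tunable constant, so it must show that $a_n^{-1}\pmeas(A_n>C_0a_n)$ merely stays \emph{bounded}; this forces the delicate sequence estimate of Lemma \ref{lem:sequence proof} (controlling $x(1-(1-x^{-(p+1)})^{cx^p})$), a positive-association argument $(\star\star)$ to replace a joint probability over overlapping cells by a product, and a staged counting argument $(\star\star\star)$ feeding into Stone's binomial lemma. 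You instead inflate the constant $C$ in $h_n$ so that both failure probabilities decay faster than any polynomial, after which a plain union bound over cells suffices and the bad event is trivially $o(a_n)$. Your route is more elementary and avoids the correlation bookkeeping entirely (a union bound is valid under arbitrary dependence, whereas the paper's product lower bound needs justification); the price is a larger, untracked constant in the $\limsup$, whereas the paper's version keeps the leading constant essentially tight at the scale $C_0+4\obserrbnd C_1$. Both are valid proofs of the stated rate, and your observation that the $\epsilon<\bar\epsilon$ restriction in Assumption \ref{ass:noise lip} is asymptotically vacuous matches the paper's handling.
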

\begin{proof}
    See appendix \ref{sec: proof of thm: conv rate}.
\end{proof}

Convergence lower bounds do not exist for the exact setting considered in this paper signifying that we cannot directly compare the rates stated in Theorem \ref{thm:conv rate} to a theoretically optimal convergence rate. Instead, we can note that the convergence rate of Lipschitz interpolation is in line with several known optimal rates in related settings (see Table \ref{table:conv rate}), i.e. non-parametric regression on the Lipschitz continuous function space endowed with an $L_2$ or $L_\infty$ norm. In particular, we note that the exponent of the convergence rate derived for Lipschitz interpolation exactly matches the exponent of the convergence rate derived in \cite{tsybakov2004introduction} in the case where the noise distribution is assumed to be uniform (i.e. $\eta = 1$). Our convergence rate is however larger by a log-factor due to a difference in norm.

Furthermore, by varying $\eta$ in Assumption \ref{ass:noise lip}, we can compare our rate of convergence: O$\left((n^{-1}log(n) \right)^\frac{\alpha}{d+\eta\alpha} )_{n \in \nat}$ to classical non-parametric convergence rates. More precisely, we observe the following:

\footnotetext[4]{\label{footnote: gauss}Various generalisations of this noise assumption exist, see \cite{stone1982optimal}.}

\begin{itemize}
\item {\underline{For $\eta<2$}}: the derived convergence rates for Lipschitz interpolation are better than the known optimal convergence rates obtained under a Gaussian tail noise assumption on the noise distribution: $(n^{-1}log(n))^{\frac{\alpha}{2\alpha + d}}$ (\cite{stone1982optimal}) which are attained\footnote{\label{footnote: gyorfi conv}Note that these methods can be shown to converge at this rate under the simple assumption of bounded variance (\cite{gyorfi2002distribution}).} by Gaussian process regression (\cite{yang2017frequentist}) and other kernel-based non-parametric methods such as local polynomial regression (\cite{stone1982optimal}) or the Nadaraja-Watson estimator (\cite{tsybakov2004introduction}, \cite{muller2010estimation}). %that rely on local averaging properties in order to ensure convergence (\cite{meister2013asymptotic}).

\item \underline{For $\eta>2$}: the opposite becomes true and these alternative non-parametric methods can be expected to converge quicker asymptotically than the Lipschitz interpolation framework. 
\end{itemize}

\begin{table*}
\centering
\begin{tabular}{||c c c c||} 
 \hline
 Algorithm/Type & Convergence Rate & Noise Assumption & Norm \\ [0.5ex] 
 \hline\hline
 LI (Upper Bound) &  O$\left(n^{-1}log(n) \right)^\frac{\alpha}{d+\eta\alpha}$& Bounded & $L_\infty$. \\ 
  \hline
 Optimal (\cite{tsybakov2004introduction}) &  $\Theta \left(n^{-1} \right)^\frac{\alpha}{d+\alpha}$ & Uniform ($\eta=1$) & $L_2$ \\ 
 \hline
 Optimal (\cite{stone1982optimal}) &  $ \Theta \left(n^{-1}log(n) \right)^\frac{\alpha}{d+2\alpha}$ & Gaussian \footnotemark[4] & $L_\infty$ \\ 
 \hline
 Optimal (\cite{stone1982optimal}) &  $\Theta \left(n^{-1} \right)^\frac{\alpha}{d+2\alpha}$ & Gaussian\color{blue}\footnotemark[4]{\label{footnote: gauss}}\color{black} & $L_2$  \\
 \hline
Optimal (\cite{jirak2014adaptive}) &  $\Theta \left(n^{-1} \right)^\frac{\alpha}{1+\eta\alpha}$ (d=1) & Boundary Regr. & $L_2$  \\
 \hline
 Upper Bound (\cite{selk2022multivariate}) &  O$ \left(n^{-1}log(n) \right)^\frac{\alpha}{d+\eta\alpha}$ & Boundary Regr. & $L_\infty$  \\
 \hline
\end{tabular}
\caption{Comparison of the convergence rate derived in Theorem \ref{thm:conv rate} with optimal rates of convergence rates in similar settings and discussion given in this section.}
\label{table:conv rate}
\end{table*}

This "$\eta$-condition" provides a theoretical tool for comparing the expected long-run performance of Lipschitz interpolation relative to alternative non-parametric methods and can help guide the choice of the system identification approach if information on the non-regularity of the noise distribution is obtainable. We note that the convergence rates of the kernel-based non-parametric methods stated in Table \ref{table:conv rate} hold under general noise assumptions (see footnotes \ref{footnote: gauss} and \ref{footnote: gyorfi conv} below) and that, aside from the Nadaraja-Watson estimator, no formal derivation of improved convergence rates in the bounded noise setting considered in this paper currently exists\footnote{To the extent of our knowledge.} for these methods. As these kernel-based non-parametric frameworks generally rely on local averaging of the noise in order to prove convergence, it is expected that their convergence rates do not improve with respect to their classical convergence rates (stated in Table \ref{table:conv rate}) under Assumption \ref{ass:noise lip simple} and  Assumption \ref{ass:noise lip}. This has been formally shown to be true for the Nadaraja-Watson estimator by \cite{muller2010estimation} and a more general discussion on the topic can be found in \cite{meister2013asymptotic}.

\begin{figure*}[t]
\centering
\includegraphics[width=0.32\textwidth]{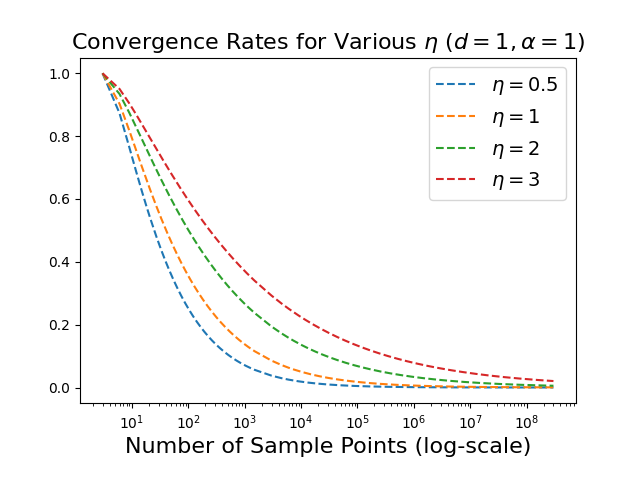}  
\includegraphics[width=0.32\textwidth]{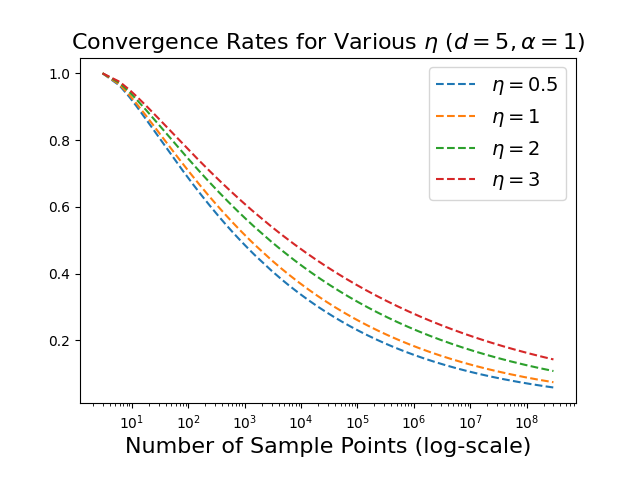}
\includegraphics[width=0.32\textwidth]{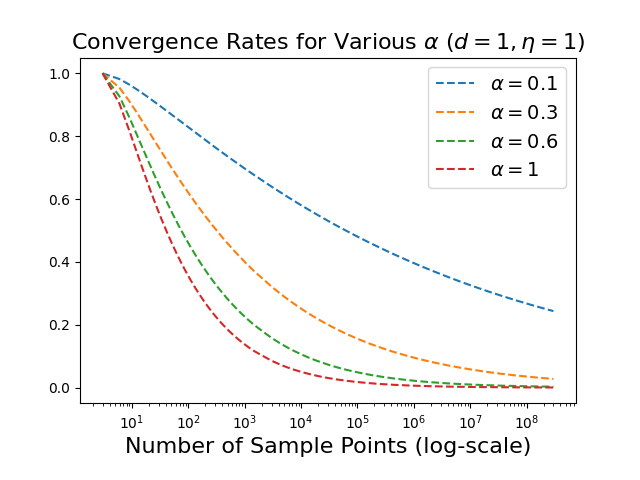}  
	\caption[Illustration of the behaviour of the convergence rates derived in Theorem \ref{thm:conv rate} for various values of $(d, \alpha, \eta)$]{ Illustration of the behaviour of the convergence rates derived in Theorem \ref{thm:conv rate} for various values of $(d, \alpha, \eta)$.	\label{Fig: conv rate}}%
\end{figure*}

As discussed above, the convergence rates obtained in Theorem \ref{thm:conv rate} under the bounded noise assumptions are better than the classical optimal convergence rates derived by \cite{stone1982optimal}. This is possible as the lower bounds of these optimal convergence rates are generally derived under the condition that the noise has a positive density with respect to the Lebesgue measure on $\Real$ which does not hold for the noise assumptions of this paper. As a consequence, $O \left(n^{-1}log(n) \right)^\frac{\alpha}{d+\eta\alpha}$ provides a new general upper bound on the non-parametric regression problem in the bounded noise setting and future work can be done on deriving lower bounds to match these results. We expect the lower bounds to be tight given recent results by \cite{jirak2014adaptive} on the optimal convergence rates of the related non-parametric boundary regression problem (see below for a more detailed discussion).

%As mentioned above, the convergence of these methods relies on local averaging statistics which can be improved on significantly by frameworks that utilise extreme value statistics such as Lipschitz interpolation under non-regular noise assumptions (\cite{meister2013asymptotic}).

The optimality results of Theorem 2 of \cite{milanese2004set} show that $( \tilde \decke_n)_{n \in \nat}$,
$( \tilde \boden_n)_{n \in \nat}$  are exactly equal to the upper and lower bounds of the \textit{feasible systems set}, i.e.
the set of all data-consistent Lipschitz continuous systems and therefore provide worst-case error prediction bounds. With little modification to the proof of Theorem \ref{thm:conv rate}, both error bounds can be shown to converge to $f$ at the same rate as $(\hat f_n)_{n \in \nat}$ as stated in the following Corollary.

\begin{cor} \label{cor:FSS}
    Assume that the setting and assumptions of Theorem \ref{thm:conv rate} holds. The worst-case prediction guarantees $( \tilde \decke_n)_{n \in \nat}$,
$( \tilde \boden_n)_{n \in \nat}$ defined in Remark \ref{rem:alt} with second hyperparameter: $\obserrbnd' = \obserrbnd$, converge uniformly to any target function $f\in Lip(L^*,\metric)$ at a rate of at least $((n^{-1}log(n) )^\frac{\alpha}{d+\eta\alpha} )_{n \in \nat}$.
\end{cor}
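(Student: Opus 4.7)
The plan is to closely mirror the proof of Theorem \ref{thm:conv rate}, exploiting that the structure of the worst-case bounds $\tilde\decke_n, \tilde\boden_n$ is in fact slightly simpler than that of the predictor $\hat f_n$. First observe that since $\tilde f_i + \obserrbnd = f(s_i) + (e_i + \obserrbnd) \geq f(s_i)$ under Assumption \ref{ass:noise lip simple} and $L \geq L^*$, the Lipschitz property of $f$ yields $\tilde\decke_n(x) \geq f(x)$, and symmetrically $\tilde\boden_n(x) \leq f(x)$ for all $x \in \inspace$. So both errors are pointwise non-negative, and it suffices to produce matching one-sided upper bounds. Using $f(s_i) - f(x) \leq L^* \metric(x,s_i)$, we obtain
$$0 \leq \tilde\decke_n(x) - f(x) \leq \min_{i=1,\dots,N_n}\bigl\{(e_i + \obserrbnd) + (L+L^*)\metric(x,s_i)\bigr\},$$
with the analogous inequality for $f(x) - \tilde\boden_n(x)$ after replacing $e_i + \obserrbnd$ by $\obserrbnd - e_i$. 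Both $e_i + \obserrbnd$ and $\obserrbnd - e_i$ are non-negative random variables whose lower tail is precisely the quantity controlled by Assumption \ref{ass:noise lip}: $\pmeas(e_i + \obserrbnd < \epsilon) \geq \gamma\epsilon^\eta$ for $\epsilon \in (0,\bar\epsilon)$.

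Next, I would invoke the same covering/discretisation machinery used in the appendix proof of Theorem \ref{thm:conv rate}. Fix a radius $R_n > 0$ and a $\metric$-minimal cover $\mathrm{Cov}(R_n)$ of $\inspace$; under Assumptions \ref{ass: input space2} and \ref{ass:inspace norm} one has $|\mathrm{Cov}(R_n)| = O(R_n^{-d/\alpha})$, and under Assumption \ref{ass:sampling} each ball $B$ contains at least $L_n = \Theta(n R_n^{d/\alpha})$ sample inputs with probability tending to $1$ exponentially fast (by a standard Chernoff estimate). Picking for each $x \in \inspace$ a ball $B^x$ of the cover containing $x$, the decomposition above yields
$$\tilde\decke_n(x) - f(x) \leq (L+L^*)R_n + \min_{s_i \in B^x \cap G_n^\inspace}\bigl(e_i + \obserrbnd\bigr).$$
A union bound over the $|\mathrm{Cov}(R_n)|$ balls then reduces the tail estimate for $\sup_{x \in \inspace}(\tilde\decke_n(x) - f(x))$ to controlling, in each ball, the probability that all $L_n$ i.i.d.\ copies of $e_i + \obserrbnd$ exceed $\epsilon$, which by Assumption \ref{ass:noise lip} is at most $(1-\gamma\epsilon^\eta)^{L_n} \leq \exp(-\gamma\epsilon^\eta L_n)$.

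Finally, the trade-off is optimised exactly as in Theorem \ref{thm:conv rate}: choosing $\epsilon \asymp R_n$ and balancing $\gamma \epsilon^\eta L_n \asymp \log|\mathrm{Cov}(R_n)|$ forces $R_n^{\eta + d/\alpha} \asymp n^{-1}\log n$, hence the target rate $R_n \asymp (n^{-1}\log n)^{\alpha/(d+\eta\alpha)}$. Integrating the resulting tail bound converts this into a bound on $\mathbb{E}\,\|\tilde\decke_n - f\|_\infty$, and the identical argument (with $\obserrbnd - e_i$ in place of $e_i + \obserrbnd$) controls $\mathbb{E}\,\|f - \tilde\boden_n\|_\infty$.

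The only genuine concern — and the reason this does not follow from Theorem \ref{thm:conv rate} as a one-line corollary — is verifying that the single $\min$ over in-ball samples retains the same $\eta$-exponent as the paired $\min$/$\max$ expressions appearing in the proof of Theorem \ref{thm:conv rate}. In fact the worst-case bounds case is structurally \emph{easier}: the explicit $\pm\obserrbnd$ offsets already absorb the ``boundary of the noise support'' behaviour, so no auxiliary extreme-value argument on $\max_i e_i$ is required, and the minimum of $e_i + \obserrbnd$ over a ball maps directly onto the random variable whose lower tail Assumption \ref{ass:noise lip} quantifies. Consequently the only modification to the existing proof is book-keeping, and the same rate is recovered.
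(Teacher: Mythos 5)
Your proposal is correct and matches the paper's intent exactly: the paper's own proof of Corollary \ref{cor:FSS} is simply the statement that it ``follows from the proof of Theorem \ref{thm:conv rate}'', and your argument is precisely the required modification of that proof — the same covering of $\inspace$ by $O(R_n^{-d/\alpha})$ balls, the same Chernoff-type occupancy bound from Assumption \ref{ass:sampling}, the same use of the $\gamma\epsilon^\eta$ lower tail from Assumption \ref{ass:noise lip}, and the same balancing yielding $R_n \asymp (n^{-1}\log n)^{\alpha/(d+\eta\alpha)}$. Your closing observation is also right: because $\tilde\decke_n - f$ and $f - \tilde\boden_n$ are one-sided and the $\pm\obserrbnd$ offsets are explicit, only the single in-ball minimum of $e_i+\obserrbnd$ (resp. $\obserrbnd - e_i$) must be controlled, which is strictly easier than the paired min/max event handled in the proof of Theorem \ref{thm:conv rate}.
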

\begin{proof}
    Follows from the proof of Theorem \ref{thm:conv rate}.
\end{proof}

A connection between our convergence results and succeeding work on non-parametric boundary regression (see \cite{hall2009nonparametric} and ensuing works) can be made. More precisely, consider the predictive functions $(\tilde \decke_n')_{n \in \nat}$, $(\tilde \boden_n')_{n \in \nat}$ defined for all $n\in \nat$ as $\tilde \decke_n': \inspace \to \outspace$, $x \mapsto  \decke_n(x) + \obserrbnd_1+\obserrbnd_2$ and  $\tilde \boden_n': \inspace \to \outspace$, $x \mapsto  \boden_n(x) - \obserrbnd_1-\obserrbnd_2$
 where $\obserrbnd_1, \obserrbnd_2$ are tight asymmetric bounds on the noise. 
 %\jcom{You are adding a constant and a function which is not valid (except in ). I know that you want to add input-wise -- yielding another function- but I perhaps best to say this to be formally correct}
 %\julcom{Done.}
 $(\tilde \decke_n')_{n \in \nat}$, $(\tilde \boden_n')_{n \in \nat}$ can be interpreted as conservative non-parametric boundary regression methods. Therefore, in the context of bounded noise, the two problems are equivalent and we can again slightly modify the proof of Theorem \ref{thm:conv rate} to obtain the same uniform asymptotic convergence rates of $O(n^{-1}log(n) )^\frac{\alpha}{\eta d+\alpha} )$ as $(\hat f_n)_{n \in \nat}$. These rates exactly match the recently derived best convergence rates in the multivariate boundary regression problem (\cite{selk2022multivariate}) and have the same exponent\footnote{They differ by a log-factor which is usual when considering the $L_\infty$ norm instead of the $L_2$ norm.} as the optimal rates derived with respect to the $L_2$ norm (\cite{jirak2014adaptive}). In order to properly define $(\tilde \decke_n')_{n \in \nat}$,
$(\tilde \boden_n')_{n \in \nat}$, prior knowledge of an upper bound on the Lipschitz constant ($L\geq L^*$) as well as the H\"older exponent ($\alpha$) and of tight bounds of the noise ($\obserrbnd_1,\obserrbnd_2$) are needed. However in contrast to the proposed "best" non-parametric estimators that attain the optimal rates, we do not require prior knowledge of the degree of "non-regularity" of the noise ($\eta$, defined in Assumption \ref{ass:noise lip}) which is usually required in order to define an optimal bandwidth hyperparameter (\cite{10.3150/17-BEJ992}, \cite{selk2022multivariate}). In the bounded noise setting, our assumption is therefore arguably more natural and simpler to verify in practice as ($\eta$) is generally hard to determine precisely.

\section{Online Learning: Asymptotics} \label{sec:online}

A set-up not yet explicitly considered in this paper but relevant to control applications is when the output variables can be used as input variables. More specifically, we consider the case where $f$ models the dynamics of a non-linear autoregressive stochastic system with exogenous control variables:
$$y_n = f(x_n) + e_n $$
where $x_n = (y_{n-d_y},...,y_{n-1},u_{n-d_u}, ... ,u_{n})$ with $y_i\in\outspace\subset\Real$ and $u_i \in \mathcal{U} \subset \Real^s$
for $d_y, d_u, s \in \nat$
and $e_n \in \Real$ is a noise variable that satisfies Assumption \ref{ass:noise lip simple}.  Here, $y_i$ denotes the autoregressive inputs and the $u_i$ denote vectors of past and current control inputs.
In this setting, we will therefore consider $\inspace = \Real^{d_y}\times \mathcal{U}^{d_u+1} \subset \Real^{d_y + (d_u+1)(s)}, \outspace = \Real$.  If the dynamics and control inputs are such that the underlying dynamical system is ergodic then Theorem \ref{lemma:general} can be applied and a weaker version of Theorem \ref{thm:conv rate} can be derived. However, in general, this cannot be guaranteed and the following result on the asymptotic point-wise convergence of the general Lipschitz interpolation framework is needed. 

\begin{cor} \label{cor:online} Consider $\inspace, \outspace, (x_n)_{n \in \nat}, (u_n)_{n \in \nat}, (y_n)_{n \in \nat}$ as defined above, $L^*\geq 0$ and $ (\hat f_n)_{n \in \nat}$ as defined in Definition \ref{def:KILsimplified} with $L\geq L^*$ and $(\mathcal{D}_n)_{n\in\nat} = (x_n, y_n)_{n\in\nat}$. Suppose that Assumptions \ref{ass:noise lip simple}, \ref{ass:outspace norm} hold. Assume furthermore that $\mathcal{U}\subset \Real^{s}$ is bounded. Then $\forall p \in \nat$, $M^*\in\Real^+$
$$\lim_{n \to \infty} \sup_{f\in \overline{Lip}(L^*, \metric, M^*)} \mathbb{E} \left[\norm{f(x_{n+1}) - \hat f_n(x_{n+1})}_\outspace^p \right] \to 0 $$
where $\overline{Lip}(L^*, \metric, M^*)$ denotes the set containing all functions in $Lip(L^*, \metric)$ that are bounded by $M^*$, i.e. $\norm{f}_\infty \leq M^*$.
\end{cor}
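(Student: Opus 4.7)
The proof proceeds by reducing the online setting to an analogue of Theorem~\ref{lemma:general} via compactness of the trajectory, which guarantees that $x_{n+1}$ is eventually surrounded by many past samples. First, I would observe that since $\norm{f}_\infty \leq M^*$ and $|e_n|\leq \obserrbnd$, we have $|y_n| \leq M^* + \obserrbnd$ for all $n\geq 1$; combined with boundedness of $\mathcal{U}$, the trajectory $(x_n)_{n \geq d_y}$ lies in a compact set $\mathcal{K}\subset \inspace$. A purely deterministic pigeonhole argument then shows that, for any $\eta,K>0$, the total number of ``bad'' times (those at which $x_{n+1}$ has fewer than $K$ past samples within distance $\eta$) is at most $K\,N(\eta)$, where $N(\eta)$ is the $\eta$-covering number of $\mathcal{K}$: in each of the $N(\eta)$ cells of a fixed cover, only the first $K$ visits can have fewer than $K$ in-cell predecessors. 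Hence the bad-time indicator is eventually zero for every trajectory, and $\pmeas(\mathrm{bad\ at\ }n)\to 0$ by bounded convergence.

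On the complementary good event I would use the decomposition developed in the proof of Theorem~\ref{lemma:general}: picking indices $i_1,\ldots,i_K$ with $\metric(x_{i_k},x_{n+1})\leq \eta$, one obtains
\begin{equation*}
\metric_\outspace(\hat f_n(x_{n+1}), f(x_{n+1}))
\leq (L+L^*)\eta \; +\; \tfrac{1}{2}\max\Big\{\obserrbnd + \min_{k} e_{i_k},\; \obserrbnd - \max_{k} e_{i_k}\Big\},
\end{equation*}
using the envelopes $\max_i e_i \leq \obserrbnd$ and $\min_i e_i \geq -\obserrbnd$. The remaining goal is to argue that $\min_k e_{i_k}\to -\obserrbnd$ and $\max_k e_{i_k}\to \obserrbnd$ in probability as $K\to \infty$; combined with $\eta\to 0$, this yields convergence at $x_{n+1}$ in probability.

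The main obstacle is the dependence between the closeness indices $\{i_1,\ldots,i_K\}$ and the noises $\{e_{i_k}\}$: the former is a function of the random trajectory, which in turn depends on $(e_1,\ldots,e_n)$. I would handle this by exploiting that each $x_i$ depends only on $e_1,\ldots,e_{i-1}$ (and the initial conditions and controls), not on $e_i$ itself, so that the coupling only enters through the appearance of $e_{i_k}$ in $x_{n+1}$. Two natural strategies are: (a)~condition on the $\sigma$-algebra generated by past inputs and controls and show the conditional distribution of $e_{i_k}$ retains positive mass near both boundaries of $[-\obserrbnd,\obserrbnd]$; or (b)~union-bound over all size-$K$ subsets $T\subset\{1,\ldots,n\}$, exploiting the unconditional iid structure of $\{e_i\}_{i\in T}$ together with the tightness in Assumption~\ref{ass:noise lip simple}.

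Finally, since $\metric_\outspace(\hat f_n(x_{n+1}),f(x_{n+1}))$ is deterministically bounded by $2M^* + \obserrbnd$ and the constants $L^*, M^*, \obserrbnd$ are uniform across $f\in\overline{Lip}(L^*,\metric,M^*)$, bounded convergence upgrades convergence in probability to $L^p$ convergence for any $p\in\nat$, uniformly in $f$. Concretely, one selects $\eta=\eta(n)\to 0$ and $K=K(n)\to \infty$ slowly enough that both the bad-event probability and the noise-averaging residual vanish simultaneously as $n\to \infty$.
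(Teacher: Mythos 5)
Your overall route is the same as the paper's: bound the trajectory using $\norm{f}_\infty\leq M^*$, boundedness of $\mathcal{U}$ and of the noise; cover the resulting compact set by finitely many small balls; bound $\metric_\outspace(\hat f_n(x_{n+1}),f(x_{n+1}))$ by $(L+L^*)\eta$ plus extremal statistics of the noises attached to nearby past samples; and upgrade convergence in probability to convergence of $p$-th moments via the deterministic bound $2\obserrbnd+2L\delta_\metric(\tilde\inspace)$ (this last step is exactly Lemma~\ref{lemma:tech conv equiv} in the paper). Where you genuinely differ is in how you guarantee that $x_{n+1}$ eventually has many nearby predecessors: the paper argues that after a (random) time $N_1$ every ball of the cover is visited either never or infinitely often and then restricts to the infinitely-visited balls $\tilde{\mathcal{B}}$, whereas your pigeonhole count --- at most $K\,N(\eta)$ ``bad'' times per trajectory, hence $\sum_n\pmeas(\text{bad at }n)\leq K\,N(\eta)<\infty$ --- is a cleaner, fully deterministic replacement that avoids conditioning on the random set $\tilde{\mathcal{B}}$ and the random threshold $N_1$. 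That part of your argument is an improvement in rigor over the paper's.

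The unresolved step you flag --- decoupling the noises $e_{i_k}$ from the trajectory-dependent selection of the indices $i_k$ --- is real, and it is also the step the paper itself glosses over (``can be treated with the same approach as Theorem~\ref{lemma:general}''). Of your two proposed fixes, (b) does not work: a union bound over all size-$K$ subsets of $\{1,\dots,n\}$ costs $\binom{n}{K}(1-q)^K\geq\bigl((n/K)(1-q)\bigr)^K$ with $q=\pmeas(e<-\obserrbnd+\epsilon)$, which diverges unless $K$ grows linearly in $n$; but the pigeonhole argument only guarantees $K(n)\to\infty$ at an arbitrary, possibly very slow, rate, so the two requirements cannot be met simultaneously. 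Strategy (a) is the correct one and should be made precise as follows: for a \emph{fixed} ball $B$ of the cover, the event $\{x_i\in B\}$ is measurable with respect to $\sigma(e_1,\dots,e_{i-1})$, so the successive visit times to $B$ are predictable and the noises observed at those visits form an i.i.d.\ copy of $e_1$; hence $\pmeas(\min_{k\leq K}e_{\tau_k}>-\obserrbnd+\epsilon)=(1-q)^K\to0$, and a union bound over the $N(\eta)$ balls of the (deterministic) cover handles the fact that the ball containing $x_{n+1}$ is itself random. With (a) carried out in this form your proof is complete and matches what the paper implicitly relies on.
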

\begin{proof} As in the proof of Theorem \ref{thm:conv rate},
we have that for all $n \geq 1$ and any sampling procedure $\mathcal{D}_n$, $\sup_{f\in \overline{Lip}(L^*, \metric, M^*)} \norm{\hat f_n - f}_\infty$ is uniformly bounded with probability $1$. This follows from (1) the existence of a bounded set $\tilde \inspace \subset \inspace$ such that $(x_n)_{n\in\nat} \subset \tilde \inspace$ (with probability 1) which is due to the boundedness of $\overline{Lip}(L^*, \metric, M^*)$, the compactness of $\mathcal{U}$ and Assumption \ref{ass:noise lip simple} (which implies that the noise is bounded), (2) $f\in Lip(L^*, \metric)$ and (3) by construction of the Lipschitz interpolation framework. More precisely, we have $\forall n \in \nat$, $\sup_{f\in \overline{Lip}(L^*, \metric, M^*)}\norm{ \hat f_n - f}_\infty\leq 2\obserrbnd + 2L\delta_{\metric} (\tilde \inspace)$  where $\delta_{\metric} (\tilde \inspace):= \sup_{x,y\in\tilde\inspace}\metric(x,y)$. Using Lemma \ref{lemma:tech conv equiv}, it is therefore sufficient to show convergence in probability, i.e.
$$
\forall \epsilon>0: \text{ } \lim_{n \to \infty}\sup_{f \in \overline{Lip}(L^*,\metric, M^*)}\pmeas(|\hat f_n(x_{n+1}) - f(x_{n+1})| > \epsilon ) = 0
$$
which can be done through a modified proof of Theorem \ref{lemma:general} as follows. Fix $\epsilon > 0$ and consider the minimal covering of $\bar \inspace$ by balls of radius $r < \frac{\epsilon}{4(L^*+L)}$ which we denote $cov(r)$ and the associated set of hyperballs $\mathcal{B}$ (the existence of a finite covering is guaranteed by the boundedness of $\tilde \inspace$). There exists $N_1 \in \nat$ such that for all $B\in\mathcal{B}$; $(x_n)_{n \geq N_1} \cap B \in \{0, +\infty \}$. Denoting by $ \mathcal{ \tilde B} \subset \mathcal{B}$ the subset of $\mathcal{B}$ consisting of hyperballs that contain an infinite number of elements of $(x_n)_{n \geq N_1}$, we can proceed as in the proof of Theorem \ref{lemma:general}. 

Let $f\in \overline{Lip}(L^*, \metric, M^*)$ be arbitrary. For $n>N_1$ sufficiently large (such that there is at least one sample input in each hyperball of $\mathcal{ \tilde B}$), applying the same arguments as in the proof of Theorem \ref{lemma:general}:
$$
\pmeas \left(\metric_\outspace(\hat f_n(x_{n+1}),  f(x_{n+1})) > \epsilon \right)  
$$
$$
\leq \pmeas \left(\max_{B\in\mathcal{ \tilde B}} \left|\max_{i=1,...,n}\{e_i\} 
 + \min_{x_i\in B}\{e(x_i)\} \right|> \frac{\epsilon}{2} \right) 
+ \pmeas \left(\max_{B\in\mathcal{ \tilde B}} \left|\min_{i=1,...,n}\{e_i\} + \max_{x_i\in B} \{e(x_i)\} \right|  > \frac{\epsilon}{2} \right).
$$

As the choice of $f \in \overline{Lip}(L^*, \metric, M^*)$ was arbitrary and the upper bound expressed above does not depend on $f$, we have that both terms of this upper bound can be treated with the same approach as the one used to conclude the proof of Theorem \ref{lemma:general}. This implies 
\begin{align*}
 & \quad \lim_{n \to \infty}\sup_{f \in \overline{Lip}(L^*,\metric, M^*)}\pmeas \left(|\hat f_n(x_{n+1}) - f(x_{n+1})| > \epsilon \right) \\
& \leq \lim_{n \to \infty} \pmeas \left(\max_{B\in\mathcal{ \tilde B}}|\max_{i=1,...,n}\{e_i\} 
 + \min_{x_i\in B}\{e(x_i)\}|> \frac{\epsilon}{2} \right)  \\
 & + \lim_{n \to \infty}\pmeas \left(\max_{B\in\mathcal{ \tilde B}}|\min_{i=1,...,n}\{e_i\} + \max_{x_i\in B} \{e(x_i)\}|  > \frac{\epsilon}{2} \right) \\
 & = 0
\end{align*}
which concludes the proof.
\end{proof}

The setting considered in Corollary \ref{cor:online} is the same as the one considered in \cite{milanese2004set} and in ensuing applications of the Lipschitz interpolation framework in the context of MPC (see \cite{canale2014nonlinear}, \cite{manzano2020robust}). As in Corollary \ref{cor:FSS}, the worst-case prediction guarantees $( \tilde \decke_n)_{n \in \nat}$,
$( \tilde \boden_n)_{n \in \nat}$ can be shown to provide similar guarantees to the one proposed Corollary \ref{cor:online} which provides a theoretical guarantee that even conservative adaptive controllers relying on worst-case bounds of Lipschitz interpolation methods will consider the true underlying dynamics in the long run. 
In Section \ref{sec: online control}, a slight modification of Corollary \ref{lem:mult online} that considers dynamics with multidimensional outputs $(y_n)_{n\in\nat}$ is given. This extension is then applied in the context of tracking control in order to obtain closed-loop stability guarantees for a simple
online-learning based controller

%\julcom{Discuss with Jan. Potential result, but needs 2 additional  assumptions/definitions and is very theoretical so not sure if useful.}

To conclude this section, we remark that if an additional assumption is made on the sequence of inputs $(x_n)_{n\in\nat}$, then the convergence rate derived in Theorem \ref{thm:conv rate} holds in the online learning setting.
%In order to avoid extreme cases of compact spaces, we make a light geometric assumption on $\inspace$ that has been used in the learning theory literature (e.g. see \cite{hu2020smooth} \cite{bachoc2021instance}). It ensures that a constant fraction of a ball with a sufficiently small radius and centred in $x\in\inspace$ is contained in $\inspace$ and is stated formally as follows.
%\begin{ass}\label{ass: input space2}(Geometric Assumption on $\inspace$) Let $\inspace\subset \Real^d$ be compact. There exist two constants $r_0>0, \theta \in(0,1]$ such that  $\forall x \in \mathcal{X}$, $r \in\left(0, r_0\right), \operatorname{vol}\left(B_r(x) \cap \mathcal{X}\right) \geq \theta v_r$
%\end{ass}
%Theorem \ref{thm:conv rate} can (trivially) be shown to hold with Assumption \ref{ass: input space2} replacing the assumption that $\inspace$ is a closed hypercube. (Note: if $\inspace$ is a hypercube, then $\theta = 2^{-d}$). 
This assumption is given using the following definition on the "regularity of the sampling" of $(x_n)_{n\in\nat}$.
\begin{defn}\label{def:reg samp}(Regularity Assumption for $(x_n)_{n\in\nat}$)
We say that $(x_n)_{n\in\nat}$ is regularly sampled on a set $\bar \inspace \subset \inspace $ if $\exists N\in\nat$,
$(x_n)_{n\in\nat_{\geq N}} \subset \bar \inspace$ and
$\exists M\in\nat$ such that $\forall n>N$ and $\forall A\subset \bar \inspace$ ,
$$\pmeas(x_{n+M}\in A |x_n)> C\mu(A)$$   
where $\mu(A)$ denotes the Lebesgue measure of $A$ and $C>0$ is an arbitrary constant.
\end{defn}
In essence, Definition \ref{def:KILsimplified} states that $(x_n)_{n\in\nat}$ is regularly sampled on a given set $\bar \inspace \subset \inspace$ if $(x_n)_{n\in\nat}$ will eventually be contained in $\bar \inspace$ and will continue to visit all of $\bar \inspace$ with non-zero probability. The existence of such a set depends implicitly on the target function and the defined control inputs.
%\julcom{Similar to ergodicity in a sense but for bounded noise and dynamics. Couldn't find a relevant def in the literature but maybe it exists.}
%\julcom{Note Theorem \ref{table:conv rate} holds for sampling on $int(\inspace)$ as well.}

\begin{cor}\label{cor:online conv rate}
    Assume that the setting and assumptions of Corollary \ref{cor:online} hold and consider $f\in \overline{Lip}(L^*,\metric, M^*)$. If Assumption \ref{ass:noise lip} holds and the stochastic control law $u_{n+1} := u(x_n,\hat f_n, \mathcal{D}_n)$ is defined such that $(x_n)_{n\in\nat}$ is  regularly sampled on a bounded set $\bar \inspace \subset \inspace$ that satisfies Assumption \ref{ass: input space2}, then 
$$ 
\limsup_{n \to \infty} \mathbb{E}[a_n^{-1}\norm{f(x_{n+1}) - \hat f_n(x_{n+1})}_\outspace ] < \infty.
$$
where $(a_n)_{n\in\nat} := ((n^{-1}log(n) )^\frac{\alpha}{d+\eta\alpha} )_{n \in \nat}$.
\end{cor}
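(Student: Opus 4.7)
The plan is to reduce to the structure of the proof of Theorem \ref{thm:conv rate} by exploiting the regular sampling condition of Definition \ref{def:reg samp} to recover an effectively i.i.d.-like behaviour on a sub-sequence of inputs. By Definition \ref{def:reg samp} there exists $N\in\nat$ such that $(x_n)_{n\geq N}\subset\bar\inspace$ almost surely; in particular $x_{n+1}\in\bar\inspace$ eventually, so it is enough to control $\mathbb{E}\left[\sup_{x\in\bar\inspace}|\hat f_n(x) - f(x)|\right]$, since this quantity dominates $\mathbb{E}\left[|\hat f_n(x_{n+1}) - f(x_{n+1})|\right]$.

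The crucial step is to upgrade the minorization condition $\pmeas(x_{n+M}\in A \mid x_n)\geq C\mu(A)$ into the analogue of the independent, bounded-below-density input sampling used in Theorem \ref{thm:conv rate}. Consider the thinned sub-chain $(x_{N+kM})_{k=1,\dots,K}$ with $K=\lfloor (n-N)/M\rfloor$. Iterated conditioning on the past sigma-algebras gives, for any measurable $B\subset\bar\inspace$,
\[
\pmeas\bigl(x_{N+kM}\notin B,\ \forall\, k=1,\dots,K\bigr)\leq (1 - C\mu(B))^K,
\]
which is exactly the tail bound that would arise in the i.i.d.\ case with input density bounded below by $C$, except that the effective sample size is $K\asymp n/M$; the factor $M$ is absorbed into multiplicative constants. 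Combined with Assumption \ref{ass: input space2} applied to $\bar\inspace$, a minimal $r$-cover of $\bar\inspace$ has cardinality of order $r^{-d/\alpha}$ and each cover ball has Lebesgue volume of order $r^{d/\alpha}$; a union bound then shows that with overwhelming probability every cover ball contains at least one sub-chain input, provided $r$ is not taken smaller than $a_n$.

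Given this covering statement, the remainder of the argument closely mirrors the proof of Theorem \ref{thm:conv rate}. One decomposes $|\hat f_n(x) - f(x)|$ as in the proof of Theorem \ref{lemma:general} into a Lipschitz term of order $L\cdot r$ and extremal noise terms of the form $\max_i e_i + \min_{s_i\in B^x} e(s_i)$ and the symmetric counterpart. Assumption \ref{ass:noise lip} with exponent $\eta$ controls the concentration of these extremal variables near $\pm\obserrbnd$, and balancing $r$ against this concentration rate yields the optimal choice $r\asymp a_n$ together with a tail bound of the form $\pmeas\bigl(\sup_{x\in\bar\inspace}|\hat f_n(x) - f(x)| > K a_n\bigr) \leq \psi_n(K)$ with $\psi_n(K)$ decaying rapidly in $K$.

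To conclude, as in the proof of Corollary \ref{cor:online}, boundedness of $f$ on $\bar\inspace$, boundedness of the noise and boundedness of $\bar\inspace$ give the almost-sure bound $\sup_{x\in\bar\inspace}|\hat f_n(x) - f(x)| \leq 2\obserrbnd + 2L\,\delta_\metric(\bar\inspace)$, where $\delta_\metric(\bar\inspace):=\sup_{x,y\in\bar\inspace}\metric(x,y)$. This almost-sure envelope combined with the tail estimate above delivers uniform integrability of $a_n^{-1}|\hat f_n(x_{n+1}) - f(x_{n+1})|$, hence $\limsup_n \mathbb{E}\left[a_n^{-1}|\hat f_n(x_{n+1}) - f(x_{n+1})|\right] < \infty$ by Lemma \ref{lemma:tech conv equiv} (or direct integration of the tail). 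The main obstacle is the first step: carefully handling the Markov dependence via the Doeblin-style minorization condition and verifying that the effective sample size in the covering argument degrades only by a multiplicative constant relative to the i.i.d.\ case of Theorem \ref{thm:conv rate}.
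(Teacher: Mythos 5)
Your proposal is correct and follows essentially the same route as the paper's proof: thin the trajectory to the sub-chain $(x_{N+kM})_k$, use the minorization in Definition \ref{def:reg samp} to recover the bounded-below-density sampling of Theorem \ref{thm:conv rate} with effective sample size $n/M$ (absorbed into constants), and conclude via uniform convergence on $\bar\inspace$ together with the fact that $x_{n+1}\in\bar\inspace$ eventually. If anything, your explicit Doeblin-style iterated conditioning is more careful than the paper, which simply asserts that the subsequence is sampled with a density bounded away from zero before invoking Theorem \ref{thm:conv rate} as a black box.
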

\begin{rem}
    From the proof of Corollary \ref{cor:online}, we have that 
    if $\mathcal{U}$ is bounded and $f\in \overline{Lip}(L^*, \metric, M^*)$, then there exists a bounded $\tilde \inspace \subset \inspace$ that contains $(x_n)_{n\in\nat}$ with probability $1$. Therefore, only the second part of Definition \ref{def:reg samp} and the geometric shape of $\bar \inspace$ need to be checked in order for Corollary \ref{cor:online conv rate} to hold.
    %\julcom{Therefore, the unclear part is the is the fact that $\bar \inspace$ satisfies Assumption \ref{ass: input space2} and the "uniform" sampling part of the Definition \ref{def:reg samp}. To me this unclear how to do in practice. }
\end{rem}
\begin{proof}
    The proof of Corollary \ref{cor:online conv rate} follows from Theorem \ref{thm:conv rate}. More precisely:
    
    By assumption, we have that there exists $M, N\in\nat$ and a bounded set $\bar \inspace \subset \inspace$ such that Definition \ref{def:reg samp} and Assumption \ref{ass: input space2} hold. Consider the sequence $(x_n)_{n\in\nat{\geq N}} \subset \bar \inspace$ and the subsequence $(\tilde x_n)_{n\in\nat}  \subset (x_n)_{n\in\nat_{\geq N}}$  defined such that $\tilde x_n = x_{Mn+N}$ for all $n\in \nat$. From Definition \ref{def:reg samp}, we have that for all $n\in\nat$, $\tilde x_n$ is sampled on $\bar \inspace$ with a probability distribution whose density is bounded away from zero on all of $\bar \inspace$.

    Then, defining $(\hat f_n^M)_{n\in\nat}$ as the predictors of the Lipschitz interpolation framework with hyperparameter $L$ and sample inputs $(\tilde x_n)_{n\in\nat}$, we can apply Theorem \ref{thm:conv rate} to $(\hat f_n^M)_{n\in\nat}$. This implies that $(\hat f_n^M)_{n\in\nat}$ converges uniformly on $\bar \inspace$ to $f$ at a rate that is upper bounded by $(a_{\floor{\frac{n}{M}}})_{n\in\nat} = \tilde c (a_{n})_{n\in\nat}$ for some $\tilde c>$ that depends on $M$ and where $n\in\nat$ denotes the index of the original sequence: $(x_n)_{n\in\nat}$. As the asymptotic convergence rate of $(\hat f_n)_{n\in\nat}$ is at least as fast as the convergence rate of $(\hat f_n^M)_{n\in\nat}$ due to the fact that the input samples utilised by $(\hat f_n^M)_{n\in\nat}$ are also utilised by $(\hat f_n)_{n\in\nat}$, we have that $(\hat f_n)_{n\in\nat}$ achieves the same uniform convergence rate on $\bar \inspace$. Finally, as $(x_n)_{n\in\nat_{\geq N}} \subset \bar \inspace$, the same converge rate holds for the pointwise asymptotic convergence of $(x_n)_{n\in\nat}$, i.e.
    $$\limsup_{n \to \infty} \mathbb{E}[a_n^{-1}\norm{f(x_{n+1}) - \hat f_n(x_{n+1})}_\outspace ]$$
    with $(a_n)_{n\in\nat} := ((n^{-1}log(n) )^\frac{\alpha}{d+\eta\alpha} )_{n \in \nat}$.
\end{proof}

While Corollary \ref{cor:online conv rate} provides an interesting extension to Theorem \ref{thm:conv rate}, the characterisation of the regularly sampling set $\bar X$ and the necessity of ensuring that Assumption \ref{ass: input space2} holds for $\bar \inspace$ can be difficult to do in practice. Therefore, in comparison to Corollary \ref{cor:online} which can be directly utilised in various control applications, Corollary \ref{cor:online conv rate} is essentially a theoretical result.

\color{black}

\section{Removing the Lipschitz Constant Assumption}

The main difficulty of the Lipschitz interpolation framework is obtaining a suitable hyper-parameter that properly estimates the Lipschitz constant of the unknown target function. In cases where prior knowledge of the Lipschitz constant of $f$ is not obtainable, an additional step is therefore needed. While one solution would be to compute this estimate offline beforehand, this approach is problematic when considering a stream of data. Instead, one can consider the approach developed by \cite{novara2013direct} and applied in the context of Lipschitz interpolation by \cite{calliess2020lazily} which utilises a modified version of Strongin's Lipschitz constant estimator (\cite{strongin1973convergence}) to $(\data_n)_{n \in \nat}$ to obtain a sequence $(L(n))_{n \in \nat}$ of approximations of $L^*$. These estimates can be continuously updated with the arrival of new data and are defined formally in the following definition.
\begin{defn}(LACKI rule)\label{def:LACKI rule}
	The Lazily Adapted Lipschitz Constant Kinky Inference (LACKI) rule computes a Lipschitz interpolation predictor $\predfn$ as per Definition \ref{def:KILsimplified}, but where L depends on $(\data_n)_{n \in \nat}$ and is computed as follows: 
	\begin{equation}
	L(n) := 
%	\ell(\data_n;\hestthresh)  := 
	\max \Bigl\{ 0, \max_{(s,s')  \in U_n} \frac{\metric_\outspace(\tilde f(s),\tilde f(s')) - \hestthresh}{\metric(s,s')} \Bigr\}, \label{eq:lazyconstupdaterule_batch_main}
	\end{equation}
	where $U_n = \{(g_1,g_2) \in \grid_n^\inspace \times \grid_n^\inspace | \metric(g_1,g_2) > 0 \}$ and $\lambda$ is a  hyperparameter.
\end{defn}
The noise bounds can be correctly estimated if the $\lambda$ hyper-parameter of the LACKI rule is set to $2 \obserrbnd$. \cite{calliess2020lazily} provides worst-case prediction bounds even when the noise bounds are not correctly estimated. In this paper, we focus on the case where the noise bounds are known and $\lambda$ can be correctly specified. We note that the Lipschitz estimator $L(n)$ given by LACKI is the smallest Lipschitz constant that is consistent with the data. In other words, it reduces the hypothesis space of Lipschitz continuous functions $Lip(L(n),\metric)$ that the target function f could belong to.

%In this section, we show that the sequence of non-parametric predictors $(\hat f_n)_{n \in \nat}$ generated by the general Lipschitz interpolation framework with a hyper-parameter set by the LACKI rule is asymptotically consistent in probability. To do so, we utilise Lemma \ref{lemma:general} shown in the previous section and prove that the LACKI rule converges to the best Lipschitz constant of the unknown target function. Combining these two results provides the desired consistency property of the non-parametric LACKI estimator.
We start by showing that the LACKI rule proposed in Definition \ref{def:LACKI rule} converges asymptotically to the best Lipschitz constant of the unknown target function.

\begin{lem} \label{lemma:lacki rule}
If the assumptions of Theorem \ref{lemma:general} hold, then :
$$\forall \epsilon >0, \lim_{n \to \infty} \pmeas(|L(n) - L^*| > \epsilon) = 0$$
\end{lem}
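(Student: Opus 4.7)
The plan is to establish a deterministic upper bound $L(n) \leq L^*$ holding almost surely, together with a probabilistic lower bound $\pmeas(L(n) < L^* - \epsilon) \to 0$ for every $\epsilon > 0$.

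The upper bound is immediate: for any $(s,s') \in U_n$, the triangle inequality, the $L^*$-Lipschitz continuity of $f$, and Assumption \ref{ass:noise lip simple} yield
$$|\tilde f(s) - \tilde f(s')| \leq |f(s) - f(s')| + |e(s)| + |e(s')| \leq L^* \metric(s, s') + 2\obserrbnd.$$
Since $\lambda = 2\obserrbnd$, each candidate in the maximum of (\ref{eq:lazyconstupdaterule_batch_main}) is at most $L^*$, so $L(n) \leq L^*$ almost surely.

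For the lower bound I would fix $\epsilon > 0$ and exploit that $L^*$ is the smallest Lipschitz constant: there exist $x_0, x_0' \in \inspace$ with $\metric(x_0, x_0') > 0$ and $|f(x_0) - f(x_0')|/\metric(x_0, x_0') > L^* - \epsilon/2$. Choosing $\rho > 0$ sufficiently small (depending on $\epsilon$ and $\metric(x_0, x_0')$) and letting $B_1 := B_\rho(x_0)$, $B_2 := B_\rho(x_0')$, the Lipschitz continuity of $f$ forces
$$\frac{|f(s) - f(s')|}{\metric(s, s')} > L^* - \frac{3\epsilon}{4} \quad \text{for all } (s, s') \in (B_1 \cap G_n^\inspace) \times (B_2 \cap G_n^\inspace).$$
A standard packing argument based on Definition \ref{def: unif dense} then shows that $|B_i \cap G_n^\inspace| \to \infty$ for $i = 1, 2$ as $n \to \infty$, since every ball of radius $r(n)$ contained in $B_i$ must intersect $G_n^\inspace$ and the number of disjoint such balls grows without bound.

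To pick up the extra $2\obserrbnd$ needed to offset the $\lambda$ subtraction in (\ref{eq:lazyconstupdaterule_batch_main}), I would use the tightness clause of Assumption \ref{ass:noise lip simple}: for every sufficiently small $\epsilon' > 0$ both $\pmeas(e > \obserrbnd - \epsilon')$ and $\pmeas(e < -\obserrbnd + \epsilon')$ are strictly positive. Because the $e_k$ are i.i.d., the probability of finding some $s_i \in B_1 \cap G_n^\inspace$ with $e_i > \obserrbnd - \epsilon'$ and simultaneously some $s_j \in B_2 \cap G_n^\inspace$ with $e_j < -\obserrbnd + \epsilon'$ is at least $(1 - (1-p)^{k_1})(1 - (1-p)^{k_2})$, where $k_i = |B_i \cap G_n^\inspace|$ and $p$ is a positive constant; this tends to $1$. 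For any such pair, $|\tilde f(s_i) - \tilde f(s_j)| \geq |f(s_i) - f(s_j)| + 2\obserrbnd - 2\epsilon'$, and choosing $\epsilon'$ small enough relative to $\metric(x_0, x_0')$ makes the corresponding ratio in (\ref{eq:lazyconstupdaterule_batch_main}) exceed $L^* - \epsilon$, forcing $L(n) \geq L^* - \epsilon$ with probability tending to $1$. The main obstacle is coordinating two independent rare boundary events, one near $+\obserrbnd$ and one near $-\obserrbnd$, inside disjoint neighborhoods; uniform density supplies the growing populations of samples in each neighborhood, after which i.i.d. bounded noise together with Assumption \ref{ass:noise lip simple} makes each tail event near-certain asymptotically.
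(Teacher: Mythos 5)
Your proposal follows essentially the same route as the paper's proof: the almost-sure upper bound $L(n) \leq L^*$ coming from the $\lambda = 2\obserrbnd$ offset, localisation near a near-maximising pair of the difference quotient, uniform density to populate two small disjoint balls with growing numbers of samples, and the tightness clause of Assumption \ref{ass:noise lip simple} to produce a sample pair whose noise spread nearly cancels $\lambda$. One step needs care as written: the inequality $|\tilde f(s_i) - \tilde f(s_j)| \geq |f(s_i) - f(s_j)| + 2\obserrbnd - 2\epsilon'$ is false when $f(s_i) < f(s_j)$ while $e_i - e_j > 0$; you need the sign of $f(s_i) - f(s_j)$ to agree with that of $e_i - e_j$, which is precisely the role of the $cond(s_i,s_j)$ restriction in the paper's argument. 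The fix is available entirely inside your construction: in the non-trivial case $L^* > \epsilon/2$ you may assume without loss of generality that $f(x_0) > f(x_0')$, shrink the balls so that $f(s) > f(s')$ for all $s \in B_1$, $s' \in B_2$, and then your assignment of the $+\obserrbnd$ tail event to $B_1$ and the $-\obserrbnd$ tail event to $B_2$ is exactly the correct orientation; when $L^* \leq \epsilon/2$ the lower bound is automatic since $L(n) \geq 0$ by definition.
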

\begin{proof} Fix an arbitrary $\epsilon>0$.
We start by defining an auxiliary function F:
\begin{align*}
     F: Dom(F) := \inspace \times \inspace-\{(x&,x) | x \in \inspace \}  \longrightarrow \Real_{\leq 0} \\
     (x,y) & \longmapsto \frac{\metric_\outspace{(f(x),f(y))}}{\metric(x,y)}
\end{align*}
By construction, $L^* = \sup_{(x,y) \in Dom(F)} F(x,y)$ and there exists $(x_1,x_2) \in Dom(F)$ such that $  L^* - \frac{\epsilon}{2} \leq F(x_1,x_2) \leq L^*$. Hence,
$$ \pmeas \left(|L(n) - L^*| > \epsilon \right) $$ $$\leq \pmeas \left( |L(n) - F(x_1,x_2)|+| F(x_1,x_2) - L^*| > \epsilon \right) $$
$$ = \pmeas \left( |F(x_1,x_2) - L(n)| > \frac{\epsilon}{2} \right) = \pmeas \left( F(x_1,x_2) - L(n) > \frac{\epsilon}{2} \right).$$
 Since F is continuous on its domain, we have that $\exists \delta_1>0$ such that $\forall (x,y) \in B_{\delta_1}((x_1,x_2))$\footnote{Here, $B_\delta((x_1,x_2))$ denotes the ball centered in $(x_1,x_2)$ of radius $\delta_1$ with respect to $\metric_{\inspace\times\inspace}$ defined such that $\metric_{\inspace\times\inspace}((x_1,x_2),(x_1',x_2')) = \metric(x_1,x_1') + \metric(x_2,x_2')$ } $\cap$ $Dom(F)$, $|F(x_1,x_2) -F(x,y)| <\frac{\epsilon}{2}$. Defining  $0< \delta_2 < \min\{ \frac{\delta_1}{2}, \frac{\metric(x_1,x_2)}{2} \}$, we 
 consider the two hyperballs $B_1:= B_{\delta_2}(x_1)$, $B_2:= B_{\delta_2}(x_1)$. Then
 \begin{align*}
  F(x_1,x_2) &- L(n) \\
  = F(x_1,x_2) & - \max_{(s,s')  \in U_n} \frac{|\tilde f(s) - \tilde f(s') | - \hestthresh}{\metric(s,s')} 
 \\
 \leq F(x_1,x_2) & -\max_{s_i\in B_1, s_j\in B_2} \frac{|\tilde f(s_1)-\tilde f(s_j)| - \hestthresh}{\metric(s_i,s_j)}
\\
 \leq F(x_1,x_2)  & -\max_{\substack{s_i\in B_1, s_j\in B_2  \\ cond(s_i,s_j)}} \frac{|f(s_i)- f(s_j)| + |e(s_i)-e(s_j)| - \hestthresh}{\metric(s_i,s_j)}\\
  \leq F(x_1,x_2) & - \min_{\substack{s_i\in B_1, s_j\in B_2  \\ cond(s_i,s_j)}}\frac{|f(s_i) - f(s_j)| }{\metric(s_i,s_j)}  \\
  & -\max_{\substack{s_i\in B_1, s_j\in B_2  \\ cond(s_i,s_j)}} \frac{ |e(s_i)-e(s_j)| - \hestthresh  }{\metric(s_i,s_j) }.
\end{align*}
where  $cond(x,y) := \left\{sgn\left(f(s_i) - f(s_j)\right) = sgn \left(e(s_i)-e(s_j)\right) \right\}$ and with abuse of notation, $\tilde f(s_i)$ $e(s_i)$ denote the noise term associated with the input $s_i$.
By definition of $B_1$, $B_2$, we have 
\begin{align*}
& F(x_1,x_2) - \min_{\substack{s_i\in B_1, s_j\in B_2  \\ cond(s_i,s_j)}}\frac{|f(s_i) - f(s_j)| }{\metric(s_i,s_j)}\\
& = F(x_1,x_2) - \min_{\substack{s_i\in B_1, s_j\in B_2  \\ cond(s_i,s_j)}}F(s_i, s_j) \leq \frac{\epsilon}{4}.
\end{align*}
Substituting this value into the initial expression, we can obtain the upper bound
\begin{align*}
& \frac{\epsilon}{4} -\max_{\substack{s_i\in B_1, s_j\in B_2  \\ cond(s_i,s_j)}} \frac{ |e(s_i)-e(s_j)| - \hestthresh  }{\metric(s_i,s_j) } \\
\leq &
  \frac{\epsilon}{4} + \min_{\substack{s_i\in B_1, s_j\in B_2  \\ cond(s_i,s_j)}} \frac{\hestthresh 
 - |e(s_i) - e(s_j)| }{\metric(s_i, s_j)} \\
 \leq &
\frac{\epsilon}{4} +  \min_{\substack{s_i\in B_1, s_j\in B_2  \\ cond(s_i,s_j)}} \frac{\hestthresh 
 - |e(s_i) - e(s_j)| }{\metric(x_1, x_2) - 2\delta_2 } .
\end{align*}
By the assumption of uniformly dense sampling, there exists $M\in\nat$ such that $r(M) < {\delta_2}$. Therefore, for $n>M$, 
$$\pmeas \left(F(x_1,x_2) - L(n) > \frac{\epsilon}{2} \right)$$
$$
\leq \pmeas \left( \min_{\substack{s_i\in B_1, s_j\in B_2  \\ cond(s_i,s_j)}} \frac{\hestthresh 
 - |e(s_i) - e(s_j)| }{\metric(x_1, x_2) - 2\delta_2 } > \frac{\epsilon}{4} \right)
$$
$$ \leq \pmeas \left( \min_{\substack{s_i\in B_1, s_j\in B_2  \\ cond(s_i,s_j)}} \left\{\hestthresh 
 - |e(s_i) - e(s_j)| \right\} > \frac{\epsilon}{4}(\metric(x_1, x_2)-2\delta_2) \right)  $$ 
$$ =  \pmeas \left(\max_{\substack{s_i\in B_1, s_j\in B_2  \\ cond(s_i,s_j)}} |e(s_i) - e(s_j)|  < \hestthresh -\frac{\epsilon}{4}(\metric(x_1,x_2)-2\delta_2) \right).$$ 
As $\hestthresh = 2\obserrbnd$ and $\metric(x_1,x_2)>2\delta_2$, the last expression can be shown to converge to 0 as $n$ goes to $\infty$ by a similar argument to the one used in the proof of Theorem \ref{lemma:general}.
\end{proof}

Lemma \ref{lemma:lacki rule} proves that the modified version of Strongin's estimate defined in Definition \ref{def:LACKI rule} is a consistent Lipschitz constant estimator under bounded noise. It is therefore of interest for applications outside the one considered in this paper, e.g. see in particular global optimisation methods that depend explicitly on the Lipschitz constant (see for example \cite{malherbe2017global}). One main drawback however is that none of the finite sample estimates generated by the LACKI rule upper bound the true Lipschitz constant. This is discussed in more detail after Theorem \ref{thm:lacki}.

%Intuition:
%\textit{
%(i) Follows directly from the fact that $\Pr[ L(n) - F(x_1,x_2) > \frac{\epsilon}{2}] = 0$ by definition of $(x_1,x_2)$ and the fact that both $L(n)$ and $F(x_1,x_2)$ are bounded by L.
%\newline
%(ii) We recall that the equality: 
%\begin{equation}
%    \norm{\tilde f(s) - \tilde f(s')}_\infty = \norm{ (f(s^{x_1}_k) - f(s^{x_2}_j)) + (e(q^{x_1}_k)-e(q^{x_2}_j))}_\infty.\end{equation} 
%The idea here is to separate the the norm expression term into parts so that we can use the continuity of F to show convergence of one term and Assumptions 1 of the error function to show convergence of the other term.
%\newline
%Unfortunately, the presence of the minus sign before the max means that to bound the expression on the l.h.s. of (ii) from above, we need to bound (6) from below. To do so, we need for $f(q^{x_1}_k) - f(q^{x_2}_j$ and  $e(q^{x_1}_k)-e(q^{x_2}_j)$ to be of the same sign. In this case, 
%$$ \norm{ (f(q^{x_1}_k) - f(q^{x_2}_j)) + (e(q^{x_1}_k)-e(q^{x_2}_j))}_\infty $$ $$= \norm{ (f(q^{x_1}_k) - f(q^{x_2}_j))}_\infty + \norm{ (e(q^{x_1}_k)-e(q^{x_2}_j))}_\infty$$
%Finally, we also note that since the max is taken over fewer quantities, it becomes smaller thus making the entire term larger
%\newline
%(iii) This inequality follows from the construction of the sequences $(q^{x_1}_n)_{n \in \nat}$ and $(q^{x_2}_n)_{n \in \nat}$, which implies: 
%$$ \norm{q^{x_1}_k-q^{x_2}_j}_\infty \geq \norm{x_1 -x_2}_\infty-2r(m))$$ }

Using Theorem \ref{lemma:general} and Lemma \ref{lemma:lacki rule}, we can now show that the sequence of LACKI predictors $(\hat f_n)_{n \in \nat}$ converges uniformly and in probability to the target function $f$. 

\begin{thm} \label{thm:lacki}
If the assumptions of Theorem \ref{lemma:general} hold, then the sequence of LACKI predictors $(\hat f_n)_{n \in \nat}$ with $\hestthresh = 2\obserrbnd$ converges to f uniformly and in probability:
$$\forall \epsilon >0, \lim_{n \to \infty}\pmeas \left(\sup_{x\in \inspace}\metric_\outspace(\hat f_n(x), f(x)) > \epsilon \right) = 0$$
\end{thm}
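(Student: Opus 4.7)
The plan is to combine the Lipschitz-constant consistency of Lemma \ref{lemma:lacki rule} with the covering argument from the proof of Theorem \ref{lemma:general}. The key obstacle is that the two inequalities labelled $(1)$ and $(2)$ in that proof rely on $L\geq L^*$ (via $\min_i e_i\leq A$ and $B\leq \max_i e_i$). Since the LACKI estimate $L(n)$ can undershoot $L^*$, these bounds are not available verbatim, and extra correction terms, which must be shown to vanish uniformly as $n\to\infty$, will appear.

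First, set $D:=\sup_{x,y\in\inspace}\metric(x,y)$, which is finite since $\inspace$ is bounded. Given $\epsilon>0$, choose $\delta>0$ small enough that $\delta D<\epsilon/4$ and consider the good event $E_n:=\{|L(n)-L^*|\leq \delta\}$. Lemma \ref{lemma:lacki rule} gives $\pmeas(E_n^c)\to 0$, so after the union bound
\begin{equation*}
\pmeas\!\Bigl(\sup_{x\in\inspace}\metric_\outspace(\hat f_n(x),f(x))>\epsilon\Bigr)\leq \pmeas(E_n^c)+\pmeas\!\Bigl(\sup_{x\in\inspace}\metric_\outspace(\hat f_n(x),f(x))>\epsilon;\,E_n\Bigr),
\end{equation*}
it suffices to show that the second term tends to zero.

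Next, I would redo the decomposition $\hat f_n(x)-f(x)=\tfrac12(A+B)$ with $L$ replaced by $L(n)$. The ``easy'' halves of inequalities $(1)$ and $(2)$ still give $A\leq \min_i\{e_i+(L^*+L(n))\metric(x,s_i)\}$ and $B\geq \max_i\{e_i-(L^*+L(n))\metric(x,s_i)\}$ unconditionally. For the other halves, using $|f(s_i)-f(x)|\leq L^*\metric(x,s_i)$ together with $\metric(x,s_i)\leq D$ yields $A\geq \min_i e_i-(L^*-L(n))_+ D$ and $B\leq \max_i e_i+(L^*-L(n))_+ D$. Combining these gives
\begin{equation*}
|\hat f_n(x)-f(x)|\leq \tfrac12\max\bigl\{(I)_{L(n)},(II)_{L(n)}\bigr\}+\tfrac12(L^*-L(n))_+ D,
\end{equation*}
where $(I)_{L(n)},(II)_{L(n)}$ denote the expressions $(I),(II)$ from the proof of Theorem \ref{lemma:general} with $L$ replaced by $L(n)$. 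On $E_n$ the correction $\tfrac12(L^*-L(n))_+ D$ is at most $\tfrac12\delta D<\epsilon/8$, so it is harmless.

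Finally, since on $E_n$ we have $L^*+L(n)\leq 2L^*+\delta$, I would pick a minimal $R$-cover of $\inspace$ with $R<\epsilon/(8(2L^*+\delta))$, so that the cover is finite and works uniformly in $n$ on $E_n$. The remainder of the argument replicates the final step of the proof of Theorem \ref{lemma:general}: uniform density of $(G_n^\inspace)$ ensures that each covering ball eventually contains a sample, and Assumption \ref{ass:noise lip simple} together with the conditional independence of the noise drives the probabilities that $\max_i e_i$ and $\min_{s_i\in B^x}e_i$ approach $\obserrbnd$ and $-\obserrbnd$ respectively to one. The main bookkeeping subtlety is that $L(n)$ is not independent of the noise; this is resolved by replacing $L^*+L(n)$ by the deterministic upper bound $2L^*+\delta$ on $E_n$, which reduces the remaining probability estimate to exactly the one carried out at the end of the proof of Theorem \ref{lemma:general}.
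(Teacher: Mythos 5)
Your proof is correct, but it takes a different route from the paper's. The paper introduces the oracle predictor $\hat f_n^*$ built with the true constant $L^*$, splits $\metric_\outspace(\hat f_n(x),f(x))$ by the triangle inequality into $\metric_\outspace(\hat f_n(x),\hat f_n^*(x))+\metric_\outspace(\hat f_n^*(x),f(x))$, kills the second term by invoking Theorem \ref{lemma:general} as a black box, and bounds the first term pointwise by $\delta_\metric(\inspace)\,|L^*-L(n)|$ (the difference of two minima of families of functions that differ by at most $\metric(x,s_i)|L^*-L(n)|$ pointwise), which vanishes in probability by Lemma \ref{lemma:lacki rule}. You instead condition on the good event $E_n=\{|L(n)-L^*|\le\delta\}$, reopen the covering argument of Theorem \ref{lemma:general} with $L$ replaced by $L(n)$, and track the extra correction $(L^*-L(n))_+D$ coming from possible undershoot of the Lipschitz estimate. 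Both arguments lean on the same two ingredients (Lemma \ref{lemma:lacki rule} and the covering/extreme-noise estimate), so the mathematical content is close, but the paper's version is shorter because the oracle comparison isolates all the $L(n)$-dependence into a single deterministic bound and never has to re-enter the probabilistic machinery; your version has the merit of making explicit how the undershoot $L(n)<L^*$ propagates through inequalities $(1)$ and $(2)$, and of handling head-on the fact that $L(n)$ is noise-dependent (by replacing it with the deterministic bound $2L^*+\delta$ on $E_n$), a point the paper's proof glosses over. Your bookkeeping of the constants ($\delta D<\epsilon/4$, $R<\epsilon/(8(2L^*+\delta))$) checks out.
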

\begin{proof}
The proof of Theorem \ref{thm:lacki} follows from Theorem \ref{lemma:general} and Lemma \ref{lemma:lacki rule}.
Fix an arbitrary $\epsilon > 0$, we have
$$ \pmeas \left(\sup_{x\in \inspace}\metric_\outspace(\hat f_n(x), f(x)) > \epsilon \right) $$
$$ \leq \pmeas \left(\sup_{x\in \inspace}\metric_\outspace(\hat f_n(x), \hat f_n^*(x)) > \frac{\epsilon}{2} \right) 
+ 
\pmeas \left(\sup_{x\in \inspace}\metric_\outspace(\hat f_n^*(x),  f(x)) > \frac{\epsilon}{2} \right)$$
where  $(\hat f_n^*)_{n \in \nat}$ denotes the general  Lipschitz interpolation framework with a hyperparameter equal to the best Lipschitz constant $L^*$ of $f$.
The second term of the upper bound given above converges to 0 as $n \to \infty$ by Theorem \ref{lemma:general}. For the first term, we have
$$\pmeas \left(\sup_{x\in \inspace}\metric_\outspace(\hat f_n^*(x), \hat f_n(x)) > \frac{\epsilon}{2} \right)$$
$$
\leq \pmeas \left(\sup_{x \in \inspace}\frac{1}{2}|\min_{i=1,...,N_n} \{ \tilde f_i + L(n)\metric(x,s_i)\} 
-\min_{i=1,...,N_n} \{ \tilde f_i + L^*\metric(x, s_i)\}| > \frac{\epsilon}{4} \right) $$
$$
 + \pmeas \left(\sup_{x \in \inspace}\frac{1}{2}|\max_{i=1,...,N_n} \{ \tilde f_i - L(n)\metric(x,s_i)\}
-\max_{i=1,...,N_n} \{ \tilde f_i - L^*\metric(x,s_i)\}| > \frac{\epsilon}{4} \right) $$
$$ \leq \pmeas \left(\sup_{x \in \inspace}\metric(x,s^*_i)| L^* -L(n)| > \frac{\epsilon}{4} \right)
+ \pmeas \left(\sup_{x \in \inspace}\metric(x, s^*_k)| L^* -L(n)| > \frac{\epsilon}{4} \right) $$
\begin{equation}
    \leq 2\pmeas \left(\delta_{\metric}(\inspace)|L^*-L(n)| > \frac{\epsilon}{4} \right)
\end{equation}  
where $s^*_i:=\argmin_{i=1,...,N_n} \{ \tilde f_i + L(n) \metric(x, s_i)\}$ and $s^*_k:=\argmax_{i=1,...,N_n} \{ \tilde f_i - L(n)\metric(x,s_i)\}$.  As $\delta_{\metric}(\inspace)$ is finite by assumption, Lemma \ref{lemma:lacki rule} can be applied to show that $\pmeas(\delta_{\metric}(\inspace)|L^*-L(n)| > \frac{\epsilon}{4})$ converges to 0.
\end{proof}
%remark on why a no convergence rate is proposed, maybe cite our work on Lipschitz constant estimation.

In general, it suffices for the sequence of Lipschitz constant estimates to converge to a value that is bigger than the best Lipschitz constant in order for the consistency guarantees given in Theorem \ref{thm:lacki} to hold. This follows from the fact that Lemma \ref{lemma:lacki rule} holds for any Lipschitz interpolation framework with $L\geq L^*$. Furthermore, if the Lipschitz constant estimate can be guaranteed to be feasible\footnote{i.e. $\hat L(n) \geq L^*$.} in a finite number of queries and is asymptotically bounded, then the rate of convergence of the adaptive Lipschitz interpolation method matches the one derived in Theorem \ref{thm:conv rate}. Unfortunately, as remarked above, the LACKI rule proposed in Definition \ref{def:LACKI rule} is not feasible for any finite number of sample points but converges only asymptotically to the true best Lipschitz constant. One approach to remedying this problem would be to include a multiplicative factor $\kappa \geq 1$ (similar to the original approach proposed by \cite{strongin1973convergence} in the noiseless sampling setting) in the LACKI rule. However, developing a principled approach to setting $\kappa$ is non-trivial and depends on second order partial derivatives of the unknown target function. 

Furthermore, in contrast to the general Lipschitz interpolation approach, the LACKI estimator is also not necessarily asymptotically consistent in the setting of a non-linear discrete-time dynamic system. This is due to the fact that dependent on the sampling sequence, the LACKI rule may never become large enough to ensure that the relations (1) and (2) derived in the proof of Lemma \ref{lemma:lacki rule} hold. This issue could potentially be fixed 
by including a "memory hyper-parameter" that limits the number of past observations considered in the $\decke_n$, $\boden_n$ functions. This extension will be investigated in future work.

In essence, while the general Lipschitz interpolation framework can be shown to perform well as a non-parametric estimation method, the additional difficulty of Lipschitz constant estimation implies that many of the desirable asymptotic properties become difficult to obtain for a fully adaptive version of the framework. A detailed discussion on this issue can be found in \cite{huang2023on} where optimal convergence rates are given for the Lipschitz constant estimation problem and a feasible asymptotically consistent estimation method is developed.

\section{Connections to Online Learning and Control} \label{sec: online control}

We conclude this paper by providing a simple illustration of the potential applicability of our results to learning-based control. More precisely, we slightly modify the online consistency results of the general Lipschitz interpolation stated in Section \ref{sec:online} in order to obtain closed-loop stability of a class of online learning-based trajectory tracking controllers discussed in \cite{sanner1991gaussian}, \cite{aastrom2013adaptive}, \cite{chowdhary2013bayesian}, \cite{calliess2020lazily}.

We briefly recall the setting of the trajectory tracking control problem considered by \cite{calliess2020lazily}. The goal is to ensure that a sequence of states $(y_n)_{n\in\nat}$ follows a given reference trajectory $(\xi_n)_{n\in\nat}$. In order to do so, it is assumed that the states $(y_n)_{n\in\nat}$ satisfy a multivariate recurrence relation described as follows: $$y_n = f(x_n) $$
where $x_n = (y_{n-d_y},...,y_{n-1},u_{n-d_u}, ... ,u_{n})$ with $y_i\in\mathcal{Y}\subset\Real^l$ denoting the past autoregressive inputs, $u_i \in\mathcal{U}\subset \Real^s$ 
denoting a vector of past or current control inputs for $d_y, d_u, s, l \in \nat$.  In this setting, we will therefore consider $\inspace = \Real^{(l) (d_y)}\times \mathcal{U}^{d_u+1} \subset \Real^{(l)(d_y) + (s)(d_u+1)}$, $\outspace = \Real^l$. Note that in contrast to the setting considered in Section \ref{sec:online} the noise does not impact the state and will only be assumed to be observational: we assume that the Lipschitz interpolation framework has access to noisy samples of function values $f(x_i)$ at each time step $i<n$: $\data_n =\{(x_i, \tilde f_i) | i<n \}$.

%This setting is, therefore, more straightforward than the one previously considered. For completeness, we also provide multivariate extensions of the asymptotic results obtained in Section \ref{sec:online} in the appendix; however, only the simple setting is considered for the control application of this paper.
%$e_n \in \Real^l$ is a multivariate noise variable that will be assumed to satisfy Assumption \ref{ass:noise lip2}

Under this assumption on the system dynamics, the problem becomes equivalent to defining a control law that ensures that the tracking error $(\zeta_n)_{n\in\nat}$, $\zeta_n = \xi_n - y_n$ becomes stable: obtaining, in an ideal scenario, a closed-loop recurrence relation 
$$\zeta_{n+1} = \phi(\zeta_n) $$
where $\phi$ is a contraction with a desirable fixed point $\zeta_*$, typically $\zeta_* = 0$.

%Assuming the system is sufficiently well-behaved, controller can be designed to achieve this type of stability

This type of stability is well-known to be achievable when the dynamics of the states $(y_n)_{n\in\nat}$ are known and sufficiently well-behaved (\cite{aastrom2013adaptive}) or when $f$ is assumed unknown but well approximated by linear learning-based methods  (\cite{867022}). Obtaining such guarantees in the setting where $f$ is assumed both unknown and non-linear is less straightforward although significant research has been conducted with the use of non-parametric regression methods (\cite{sanner1991gaussian}, \cite{chowdhary2013bayesian}, \cite{calliess2020lazily}).

Under a general assumption on the control law, the online-learning guarantees of the   Lipschitz interpolation method (Corollary \ref{cor:online} and Lemma  \ref{lem:mult online}) derived in this paper can be shown to directly imply the convergence of the tracking error to a fixed point, therefore ensuring the asymptotic stability of the controller.

To do so, we begin by formally extending the online guarantees of the Lipschitz interpolation stated in Corollary \ref{cor:online} to the multi-dimensional online setting described above. In this case, the Lipschitz interpolation framework is applied component-wise as follows:

\begin{defn}\label{def:KILmulti}(Multi-dimensional Lipschitz interpolation) Let $ L \in \Real_{\geq 0}$ be a selected hyper-parameter. Using the set-up defined above, we define the sequence of predictors $( \hat f_n)_{n \in \nat}$, $\hat f_n: \inspace \to \outspace $  associated to $(\data_n)_{n \in \nat}$, as 
   	\begin{equation*}
    \forall j\in\{1, ... ,l \}, \quad
	\predfn^i\bigl(x) := \frac{1}{2} \decke_n^j(x) + \frac{1}{2}  \boden_n^j(x), 
	\end{equation*}
	where $\decke_n^j, \boden_n^j : \inspace \to \Real $ are defined as
	\begin{align*}
	    &\decke_n^j(x) = \min_{i=1,...,N_n} { \tilde f_{n,i}^j} + L \metric(x,s_i)  \\ 
	    &\boden_n^j(x) = \max_{i=1,...,N_n} {\tilde f_{n,i}^j} - L \metric(x,s_i)  \end{align*}
 for all  $j\in\{1, ... ,l \}$.
\end{defn}
%\jcom{I am not super sure the below is correct. Should you perhaps consider the noiseless dynamics with observational noise ? (just get rid of the $\epsilon$ in the NAR process)}
%\julcom{oh really?? could you elaborate why so I can double check?}

We note that under Assumption \ref{ass:outspace norm2} provided below, it is relatively straightforward to observe that each component of the target function is also Lipschitz continuous with the same Lipschitz constant. This implies that the properties utilised in the previous sections hold component-wise for the multi-dimensional Lipschitz interpolation framework.
%: $\forall n\in\nat$, $\forall i\in\{1,...,l\}$,$$\forall x\in\inspace, \quad \boden_n^i(x) - \obserrbnd \leq f(x) \leq \decke_n^i(x)+\obserrbnd.$$

In order to derive the desired online guarantee for the Lipschitz interpolation framework described in Definition \ref{def:KILmulti}, we first extend the assumptions of the previous sections to the multi-dimensional output setting.

\begin{ass}\label{ass:noise lip2}(Assumption on multi-dimensional noise)
The noise variables $(e_k)_{k \in \nat}$, $e_k \in \Real^d$ are assumed to be independent and identically\footnote{The identically distributed assumption is made to alleviate notation and is not technically needed in our derivations.} distributed random variables such that $\exists \obserrbnd \in \Real_+^d$, such that $\forall k\in\nat$ $j\in\{1,...,d\}$, $e_k^j \in [-\obserrbnd^j, \obserrbnd^j]$ with probability $1$. We assume further that the bounds of the support are tight, i.e.  $\forall \epsilon > 0$, $\forall j\in\{1,...,d\}$, 
$$\pmeas(e_k^j \in [-\obserrbnd^j + \epsilon, \obserrbnd^j]), \pmeas(e_k^j \in [-\obserrbnd^j, \obserrbnd^j -\epsilon]) > 0.$$ 
\end{ass}
\begin{ass} \label{ass:outspace norm2}(Assumption on $\metric_\outspace$).
In this section, we will restrict ourselves to the case, $\metric_\outspace(y,y') = \norm{y-y'}_1$, $\forall y, y' \in \mathcal{Y}$  where $\norm{.}_1$ denotes the usual 1-norm. 
\end{ass}
\begin{rem}
    By the strong equivalence of norms on $\Real^l$, it is sufficient to show the results of this section for $\norm{.}_\outspace = \norm{.}_1$. Additionally, we note that if a Lipschitz constant of the target function is known for a given norm on $\Real^l$, then it is straightforward to compute a feasible Lipschitz constant for any other norm on $\Real^l$.
\end{rem}
\begin{cor}\label{lem:mult online}(Multidimensional Online Learning)
Consider the multidimensional setting described above\footnote{With the same arguments, one can show that the same result holds for the multidimensional version of the dynamical system described in Section \ref{sec:online}.}, $L^*, M^* \in \Real^+$ and $(\hat f_n)_{n\in\nat}$ as defined in Definition \ref{def:KILmulti} with $L \geq L^*$ and $(\mathcal{D}_n)_{n\in\nat} = (x_n, y_n)_{n\in\nat}$. Assume that Assumptions \ref{ass:noise lip2} and \ref{ass:outspace norm2} hold. Assume furthermore that $\mathcal{U}$ is compact. Then
$$
\lim_{n \to \infty} \sup_{f\in \overline{Lip}(L^*, \metric,M^*)} \mathbb{E}[\norm{f(x_{n+1}) - \hat f_n(x_{n+1})}_\outspace] \to 0
$$
where we recall that $\overline{Lip}(L^*, \metric, M^*)$  denotes the set containing all functions in $Lip(L^*, \metric)$ that are bounded by $M^*$, i.e. $\norm{f(x)}_\outspace\leq M^*$.
\end{cor}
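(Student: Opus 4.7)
The plan is to reduce Corollary \ref{lem:mult online} to the scalar online consistency result (Corollary \ref{cor:online}) by exploiting the fact that the multidimensional Lipschitz interpolation in Definition \ref{def:KILmulti} is defined component-wise and that Assumption \ref{ass:outspace norm2} makes the output metric decompose additively across components. First, I would verify that each component of the target function inherits the Lipschitz and boundedness assumptions: if $f\in \overline{Lip}(L^*,\metric,M^*)$ with $\norm{\cdot}_\outspace = \norm{\cdot}_1$, then for each $j\in\{1,\dots,l\}$,
\begin{equation*}
|f^j(x)-f^j(x')| \leq \sum_{k=1}^{l}|f^k(x)-f^k(x')| = \norm{f(x)-f(x')}_1 \leq L^*\metric(x,x'),
\end{equation*}
so $f^j \in Lip(L^*,\metric)$, and similarly $|f^j(x)|\leq \norm{f(x)}_1 \leq M^*$. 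An analogous component-wise decomposition of Assumption \ref{ass:noise lip2} shows that each coordinate of the noise $(e_n^j)_{n\in\nat}$ satisfies the scalar bounded-noise Assumption \ref{ass:noise lip simple} with tightness of the support bound $\obserrbnd^j$.

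Next, by construction of Definition \ref{def:KILmulti}, the $j$-th component $\hat f_n^j$ is exactly the scalar Lipschitz interpolation predictor applied to the sample set $\{(s_i, \tilde f_i^j)\}_{i=1,\dots,N_n}$, with hyperparameter $L\geq L^*$. Hence Corollary \ref{cor:online} applies coordinate-wise (with $p=1$) and gives
\begin{equation*}
\lim_{n\to\infty} \sup_{f^j\in \overline{Lip}(L^*,\metric,M^*)} \ew\!\left[|f^j(x_{n+1}) - \hat f_n^j(x_{n+1})|\right] = 0
\end{equation*}
for each $j$. Summing these $l$ convergences and invoking the decomposition
\begin{equation*}
\norm{f(x_{n+1}) - \hat f_n(x_{n+1})}_\outspace = \sum_{j=1}^{l} |f^j(x_{n+1}) - \hat f_n^j(x_{n+1})|
\end{equation*}
together with the fact that the supremum of a finite sum is upper bounded by the sum of the suprema yields the claimed convergence.

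The only subtle point, and the one I expect to require the most care, is that Corollary \ref{cor:online} is stated in terms of the class $\overline{Lip}(L^*,\metric,M^*)$ for scalar-valued functions, and one must argue that the $l$ scalar projection classes into which $f$ is decomposed are each contained in such a class uniformly. This is precisely what the first step above achieves, using that $\norm{\cdot}_\outspace=\norm{\cdot}_1$ dominates each coordinate. All other ingredients — compactness of $\mathcal{U}$, boundedness of the trajectory $(x_n)_{n\in\nat}$, and uniform a.s.\ boundedness of $\hat f_n^j - f^j$ needed to upgrade convergence in probability to convergence in expectation — transfer verbatim from the proof of Corollary \ref{cor:online} to each component, so no genuinely new probabilistic work is required beyond the component-wise reduction.
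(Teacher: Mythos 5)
Your proposal is correct and follows essentially the same route as the paper's own proof: reduce to the $\norm{\cdot}_1$ case, observe that each component $f^j$ is $L^*$-Lipschitz and bounded by $M^*$ and that each noise coordinate satisfies the scalar bounded-noise assumption, then apply the argument of Corollary \ref{cor:online} component-wise and sum. Your explicit verification that the scalar projection classes embed uniformly into $\overline{Lip}(L^*,\metric,M^*)$ is the detail the paper leaves as "relatively straightforward to observe," so nothing is missing.
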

\begin{proof}
By the strong equivalence of norms on $\Real^l$, it is sufficient to show Lemma \ref{lem:mult online} for $\norm{.}_\outspace = \norm{.}_1$:
$$
\lim_{n \to \infty} \sup_{f\in \overline{Lip}(L^*, \metric, M^*)} \mathbb{E}[\norm{f(x_{n+1}) - \hat f_n(x_{n+1})}_1] \to 0. 
$$
This is implied if  $\forall j\in\{1,...,l\}$:
$$
\lim_{n \to \infty} \sup_{f\in \overline{Lip}(L^*, \metric, M^*)} \mathbb{E}[ |f^j(x_{n+1}) - \hat f^j_n(x_{n+1})|] \to 0 
$$
where $f^j_n,\hat f^j_n$ denote the j-th component functions of $f_n,\hat f_n$. This statement can be derived using the same arguments as the ones given in the proof of Corollary \ref{cor:online} as, under Assumption \ref{ass:outspace norm2}, $f$ is component-wise Lipschitz continuous with Lipschitz constant $L^*$ and by construction, the multi-dimensional Lipschitz interpolation framework can be considered component-wise.
%which implies that the same arguments can be utilised to show consistency for every dimension. 
\end{proof}
Utilising Lemma \ref{lem:mult online}, we can now state the closed-loop guarantees of an online controller based on the Lipschitz interpolation framework.

\begin{thm} \label{thm: online theorem}
    Assume the settings described above. Assume that reference trajectory $(\xi_n)_{n\in\nat}$ is bounded and that the recursive plant dynamics satisfy: $f\in \overline{Lip}(L^*,\metric, M^*)$ for some $L^*, M^* >0$. Let $(\hat f_n)_{n\in\nat}$ be the predictors generated by the Lipschitz interpolation framework with hyperparameter $L\geq L^*$ and $(\mathcal{D}_n)_{n\in\nat} = (x_n, \tilde f_i)_{n\in\nat}$.
    If there exists a bounded control law $u_{n+1} := u(x_n,\hat f_n, \mathcal{D}_n)$ such that the closed loop dynamics are given by:
    $$\zeta_{n+1} = \phi(\zeta_n) + d_n $$
where $d_n:= f(x_n) - \hat f(x_n)$ is the one-step prediction error and $\phi$ is a contraction with a fixed point $\zeta_*\in\inspace$ and Lipschitz constant $\lambda_\phi \in [0,1)$, then we have
$$
\lim_{n\to\infty}\mathbb{E}[\norm{\zeta_n-\zeta_*}_\outspace] = 0.
$$
\end{thm}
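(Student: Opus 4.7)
The plan is to reduce the statement to the standard perturbed-contraction argument together with Corollary~\ref{lem:mult online}. First, I will exploit the contraction property of $\phi$ via the triangle inequality: writing $\zeta_{n+1} - \zeta_* = \bigl(\phi(\zeta_n)-\phi(\zeta_*)\bigr) + d_n$ and applying $\norm{\phi(\zeta_n)-\phi(\zeta_*)}_\outspace \leq \lambda_\phi \norm{\zeta_n-\zeta_*}_\outspace$, I obtain the pathwise recursion
\[
\norm{\zeta_{n+1}-\zeta_*}_\outspace \leq \lambda_\phi \norm{\zeta_n-\zeta_*}_\outspace + \norm{d_n}_\outspace.
\]
Taking expectations and unrolling from the initial step $n=0$ yields
\[
\mathbb{E}\bigl[\norm{\zeta_{n+1}-\zeta_*}_\outspace\bigr] \leq \lambda_\phi^{n+1}\,\mathbb{E}\bigl[\norm{\zeta_0-\zeta_*}_\outspace\bigr] + \sum_{k=0}^{n}\lambda_\phi^{n-k}\,\mathbb{E}\bigl[\norm{d_k}_\outspace\bigr].
\]
Since $\lambda_\phi\in[0,1)$ and the initial tracking error is finite, the first term vanishes geometrically.

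The second step is to show that $\mathbb{E}[\norm{d_k}_\outspace]\to 0$, which would normally be immediate from Corollary~\ref{lem:mult online} but requires a check of the hypotheses in the closed-loop setting. Specifically, I need to verify that the augmented state $x_n=(y_{n-d_y},\dots,y_{n-1},u_{n-d_u},\dots,u_n)$ takes values in a bounded subset of $\inspace$. This follows from $\|f\|_\infty \leq M^*$ (which bounds the autoregressive components $y_i$) together with the assumed boundedness of the control law (which bounds the $u_i$ components); consequently $\mathcal{U}$ may be replaced without loss by its compact closure, meeting the compactness hypothesis of Corollary~\ref{lem:mult online}. The corollary, applied componentwise under Assumption~\ref{ass:outspace norm2}, then delivers $\mathbb{E}[\norm{d_k}_\outspace]=\mathbb{E}[\norm{f(x_k)-\hat f_{k-1}(x_k)}_\outspace]\to 0$.

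The final step is to pass from vanishing $\mathbb{E}[\norm{d_k}_\outspace]$ to the vanishing of the convolution sum $S_n:=\sum_{k=0}^{n}\lambda_\phi^{n-k}\mathbb{E}[\norm{d_k}_\outspace]$. I will use the standard two-window argument: given $\varepsilon>0$, choose $N$ such that $\mathbb{E}[\norm{d_k}_\outspace]<\varepsilon(1-\lambda_\phi)/2$ for all $k\geq N$, which bounds the tail $\sum_{k=N}^{n}\lambda_\phi^{n-k}\mathbb{E}[\norm{d_k}_\outspace]$ by $\varepsilon/2$ uniformly in $n$; the head $\sum_{k<N}\lambda_\phi^{n-k}\mathbb{E}[\norm{d_k}_\outspace]$ is a fixed finite sum multiplied by $\lambda_\phi^{n-N}$ and hence goes to zero as $n\to\infty$. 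Combining the three steps yields $\mathbb{E}[\norm{\zeta_n-\zeta_*}_\outspace]\to 0$.

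I expect the main obstacle to be the second step: carefully justifying that the induced trajectory $(x_n)$ lies in a set to which Corollary~\ref{lem:mult online} applies, since the control law $u_{n+1}=u(x_n,\hat f_n,\mathcal{D}_n)$ depends on the (random) learned model $\hat f_n$ and on past noisy observations. Once boundedness of $(x_n)$ is established almost surely (via the inductive bound $\norm{y_n}\leq M^*$ and the assumed boundedness of $u$), the remainder of the argument is routine.
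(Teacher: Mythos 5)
Your proposal is correct and follows essentially the same route as the paper: a perturbed-contraction recursion whose homogeneous part decays geometrically, combined with Corollary~\ref{lem:mult online} to kill the forcing terms $\mathbb{E}[\norm{d_k}_\outspace]$. The only cosmetic differences are that you compare $\zeta_n$ directly to the fixed point $\zeta_*$ (using $\zeta_*=\phi(\zeta_*)$) rather than routing through the nominal trajectory $\bar\zeta_{n+1}=\phi(\bar\zeta_n)$ as the paper does, and you handle the convolution sum with a head/tail split instead of the paper's cruder $\frac{1}{1-\lambda_\phi}\max_i\mathbb{E}[\norm{d_{k+i}}_\outspace]$ bound; your explicit check that the closed-loop trajectory $(x_n)$ stays in a bounded set before invoking the corollary is a detail the paper glosses over.
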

%\begin{rem}
%    Note that $$\lim_{n\to\infty}\mathbb{E}[\norm{\zeta_n-\zeta_*}_\outspace] = 0 $$
%    implies that $\forall \epsilon > 0, $
%    $$\lim_{n\to\infty} \pmeas(\norm{\zeta_n-\zeta_*}_\outspace>\epsilon) = 0.$$
%\end{rem}
\begin{proof}
The proof follows a modified version of the proof Theorem 17 of \cite{calliess2020lazily} and an application of Corollary \ref{thm: online theorem}.

Define the \emph{nominal reference error} $(\bar \zeta_n)_{n\in\nat}$, $\bar \zeta_0 = \zeta_0$, $\bar \zeta_{n+1} = \phi(\bar \zeta_n)$ for $n\in\nat$.
Fix an arbitrary $\epsilon>0$. The proof of Theorem \ref{thm: online theorem} follows from the following sequence of steps.\\
\\
By the Banach fixed point Theorem, $\exists n_0 \in\nat$ such that $\forall n\geq n_0$,
    $\norm{\bar \zeta_n - \zeta_*}_\outspace <\frac{\epsilon}{3}$. 

\noindent Inductively, one can show that $\forall n,k \in \nat$,
    $$\mathbb{E}[\norm{\zeta_{k} - \bar \zeta_{k}}_\outspace] 
    $$ 
    $$
    \leq \lambda_{\phi}^n \mathbb{E}[\norm{\zeta_{k} - \bar \zeta_{k}}_\outspace] + \sum^{n-1}_{i=0} \lambda_\phi^{n-1-i}\mathbb{E}[\norm{d_{k+i}}_\outspace]
    $$
    $$
    \leq \lambda_{\phi}^n \mathbb{E}[\norm{\zeta_{k} - \bar \zeta_{k}}_\outspace] + \frac{1}{1-\lambda_\phi}\max_{i=0,...,n-1}\mathbb{E}[\norm{d_{k+i}}_\outspace].
    $$
    
\noindent By Lemma \ref{lem:mult online}, we have that $\mathbb{E}[\norm{d_{n}}_\outspace]$ converges to 0 as n goes to infinity. Therefore, $\exists k_0 \in \nat$ such that $\forall k\geq k_0$,
    $$
    \frac{1}{1-\lambda_\phi}\max_{i=0,...,n-1}\mathbb{E}[\norm{d_{k+i}}_\outspace] \leq \frac{\epsilon}{3}.
    $$
Let $m_0 := \max\{n_0, k_0 \}$. Since $\lambda_{\phi}^{q_0} < 1$, there exists $q_0\in \nat$ such that:
    $$\lambda_{\phi}^{q_0} \mathbb{E}[\norm{\zeta_{m_0} - \bar \zeta_{m_0}}_\outspace] < \frac{\epsilon}{3}.$$
Let $M := m_0 + q_0$. Combining the above steps, we have
    for any $ m > M$ there exists $n\geq q_0$ such that $m = m_0 + n$. This implies
    $$\mathbb{E}[\norm{\zeta_{m} - \zeta_*}_\outspace] \leq  \norm{\zeta_* - \bar \zeta_{m}}_\outspace + \mathbb{E}[\norm{\zeta_{m} - \bar \zeta_{m}}_\outspace] $$
$$
\leq \frac{\epsilon}{3} + \lambda_{\phi}^{q_0} \mathbb{E}[\norm{\zeta_{m_0} - \bar \zeta_{m_0}}_\outspace] +  \frac{1}{1-\lambda_\phi}\max_{i=0,...,n-1}\mathbb{E}[\norm{d_{m_0+i}}_\outspace] 
$$
$$
\leq \frac{\epsilon}{3} + \frac{\epsilon}{3} + \frac{\epsilon}{3} = \epsilon.
$$   
As the choice of $\epsilon$ was arbitrary, this concludes the proof.
\end{proof}

Theorem \ref{thm: online theorem} provides a theoretical result on the global stability in expectation of a general class of control problems where both the system dynamics and the error dynamics are assumed to be both unknown and non-linear. An extension of Theorem \ref{thm: online theorem} which states that the convergence rates of the Lipschitz constant estimator derived in Corollary \ref{cor:online conv rate} hold for the tracking error $(\zeta_n)_{n\in\nat}$ can also be obtained. However, this result would be contingent on the difficult-to-verify "regularity of the sampling" assumption of $(x_n)_{n\in\nat}$ (as defined in Definition \ref{def:reg samp}) and is therefore of limited interest. We provide it in the appendix for completeness.

Unfortunately, for numerous applications, the contraction assumption on dynamics of the tracking error $\phi$ is too stringent to be achieved in practice. To alleviate this issue, Theorem \ref{thm: online theorem} can be extended to consider the more general assumption that $\phi$ is an eventually contracting function if $\phi$ is also assumed to be a linear.  More formally, we define an eventually contracting function as follows.
\begin{defn}(Eventually Contracting Function)
    Let $l\in\nat$. A continuous function $h:\Real^l \to \Real^l$ is said to be eventually contracting if there exists $N \in \nat$ and $\lambda \in [0,1)$ such that $\forall x,y \in \Real^l $:
    $$\metric(h^N(x), h^N(y) ) \leq \lambda \metric(x,y).$$
\end{defn}
As with the contracting functions considered above, eventually contracting functions can be shown to admit a unique fixed point $\xi^*$. Additionally, it is well-known that a linear function $h: \Real^l \to \Real^l$ defined as $h(x) = Mx$ for some matrix $M\in\Real^{l \times l}$ is eventually contracting if and only if the spectral radius of $M$ is strictly smaller than 1: $\rho(M)<1$. 

The assumption of the existence of a control law $u_{n+1} := u(x_n,\hat f_n, \mathcal{D}_n)$ such that the closed loop dynamics are given by: $\zeta_{n+1} = \phi(\zeta_n) + d_n $and  $\phi$ is eventually contracting can be observed in applications such as the removal of wing rock during the landing of modern fighter aircraft (\cite{monahemi1996control}, \cite{chowdhary2013bayesian}). Therefore, if Theorem \ref{thm: online theorem} can be extended to hold under these assumptions, then, conditional on the existence of feasible Lipschitz constant estimate, the online learning-based reference tracking controllers utilising a Lipschitz interpolation can be ensured to be globally asymptotically stable in expectation in these settings. 
%A short illustrative example of Corollary \ref{thm: online theorem} is given in Figure \color{red} add figure \color{black}. 
This alternative result is stated in the following corollary.

\begin{cor} \label{cor: online theorem}
    Assume the setting and initial assumptions of Theorem \ref{thm: online theorem}. 
    If there exists a bounded control law $u_{n+1} := u(x_n,\hat f_n, \mathcal{D}_n)$ such that the closed loop dynamics are given by:
    $$\zeta_{n+1} = \phi(\zeta_n) + d_n $$
where $d_n:= f(x_n) - \hat f(x_n)$ is the one-step prediction error and $\phi:\Real^l \to \Real^l$, $\phi(\zeta) = M\zeta$ for a matrix $M\in\Real^{l \times l}$ that is a stable, i.e. $\rho(M) < 1$. Then
$$
\lim_{n\to\infty}\mathbb{E}[\norm{\zeta_n}_\mathcal{Y}] = 0.
$$
\end{cor}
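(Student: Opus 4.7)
The plan is to reduce Corollary \ref{cor: online theorem} to Theorem \ref{thm: online theorem} via a change of norm. The key tool is the classical fact that whenever $\rho(M) < 1$, for any $\mu \in (\rho(M), 1)$ there exists a norm $\norm{\cdot}_*$ on $\Real^l$ under which the induced operator norm of $M$ satisfies $\norm{M}_* \leq \mu$; this can be established via the Jordan normal form of $M$, or equivalently by setting $\norm{x}_* := \sum_{k \geq 0} \mu^{-k} \norm{M^k x}_2$, a series which converges by Gelfand's spectral radius formula. Under $\norm{\cdot}_*$ the linear map $\phi(\zeta) = M\zeta$ is a genuine contraction with Lipschitz constant $\lambda_\phi := \mu < 1$, and its unique fixed point is $\zeta_* = 0$, since $\rho(M) < 1$ implies $I - M$ is invertible.

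The next step is to verify that the hypotheses of Theorem \ref{thm: online theorem} remain valid after replacing $\norm{\cdot}_\outspace$ by $\norm{\cdot}_*$. By strong equivalence of norms on $\Real^l$, there exist constants $c_1, c_2 > 0$ such that $c_1 \norm{y}_\outspace \leq \norm{y}_* \leq c_2 \norm{y}_\outspace$ for every $y \in \Real^l$. Consequently $f$ is Lipschitz in $\norm{\cdot}_*$ with rescaled constant $\tilde L^* \leq (c_2/c_1) L^*$ (requiring only that the hyperparameter of the Lipschitz interpolation framework be inflated accordingly), and Corollary \ref{lem:mult online} transfers directly via the bound $\mathbb{E}[\norm{d_n}_*] \leq c_2 \, \mathbb{E}[\norm{d_n}_\outspace] \to 0$.

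Applying Theorem \ref{thm: online theorem} in the norm $\norm{\cdot}_*$ then yields $\lim_{n \to \infty} \mathbb{E}[\norm{\zeta_n}_*] = 0$, and the inequality $\norm{\cdot}_\outspace \leq c_1^{-1} \norm{\cdot}_*$ delivers the desired conclusion $\lim_{n \to \infty} \mathbb{E}[\norm{\zeta_n}_\outspace] = 0$.

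The principal obstacle will be bookkeeping rather than mathematical: one must argue carefully that inflating the Lipschitz hyperparameter $L$ by the norm-equivalence constant preserves the admissibility condition required by Corollary \ref{lem:mult online}, and that this inflation remains compatible with the boundedness assumptions on the control law. A slightly longer but conceptually simpler alternative would avoid the change of norm entirely by iterating the linear recurrence to obtain $\zeta_n = M^n \zeta_0 + \sum_{k=0}^{n-1} M^{n-1-k} d_k$, using the decay bound $\norm{M^k}_\outspace \leq C \mu^k$ guaranteed by $\rho(M) < 1$, and then closing the argument with a standard convolution-splitting argument that exploits $\mathbb{E}[\norm{d_k}_\outspace] \to 0$.
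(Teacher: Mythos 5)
Your proposal is correct, but it takes a genuinely different route from the paper. You reduce the corollary to Theorem \ref{thm: online theorem} by constructing an adapted norm $\norm{\cdot}_*$ in which $M$ is a strict contraction (the construction $\norm{x}_* = \sum_{k\geq 0}\mu^{-k}\norm{M^kx}_2$ is valid, the series converging by Gelfand's formula, and indeed $\norm{Mx}_*\leq\mu\norm{x}_*$), transferring $\mathbb{E}[\norm{d_n}_*]\to 0$ by norm equivalence, and pulling the conclusion back. The paper instead works directly in the given norm: it defines the nominal error $\bar\zeta_{n+1}=\phi(\bar\zeta_n)$, iterates the recurrence to obtain $\mathbb{E}[\norm{\zeta_{n+k}-\bar\zeta_{n+k}}_\outspace]\leq \norm{M^n}\,\mathbb{E}[\norm{\zeta_k-\bar\zeta_k}_\outspace]+\sum_{i=0}^{n-1}\norm{M^{n-1-i}}\,\mathbb{E}[\norm{d_{k+i}}_\outspace]$, uses Gelfand's formula to get $\norm{M^n}_\outspace\leq\lambda_\phi^{\floor{n/n_1}}$ for $n\geq n_1$, and closes with the same $\epsilon/3$ splitting as Theorem \ref{thm: online theorem} --- i.e., essentially the ``convolution-splitting'' alternative you sketch in your final paragraph. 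Your route is shorter and reuses Theorem \ref{thm: online theorem} as a black box; the paper's route is self-contained and makes the prefactor $K_{n_1}+n_1/(1-\lambda_\phi)$ multiplying $\max_{i}\mathbb{E}[\norm{d_{k+i}}_\outspace]$ explicit, which is what lets the rate statement of Theorem \ref{thm: online theorem2} in the appendix be read off directly. One simplification to your bookkeeping concern: the Lipschitz interpolation predictor need not be re-run with an inflated hyperparameter at all --- the predictor, the control law, and hence the sequence $(d_n)$ are unchanged; only the norm in which the tracking error is measured changes, so $\mathbb{E}[\norm{d_n}_*]\leq c_2\,\mathbb{E}[\norm{d_n}_\outspace]\to 0$ is all that is needed to invoke the argument of Theorem \ref{thm: online theorem} in the adapted norm, and the admissibility condition of Corollary \ref{lem:mult online} is checked once, in the original norm.
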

\begin{proof}
    The proof of Corollary \ref{cor: online theorem} is similar to the on given for Theorem \ref{thm: online theorem}.

Define the nominal reference error $(\bar \zeta_n)_{n\in\nat}$, $\bar \zeta_0 = \zeta_0$, $\bar \zeta_{n+1} = \phi(\bar \zeta_n)$ for $n\in\nat$. Fix an arbitrary $\epsilon>0$. 

As $\rho(M) < 1$, we have that  $\lim_{n\to\infty} \bar \zeta_n = 0$ (\cite{hasselblatt2003first}). This implies that  $\exists n_0 \in\nat$ such that $\forall n\geq n_0$,
    $\norm{\bar \zeta_n}_\outspace <\frac{\epsilon}{3}$. 

Inductively, one can show that $\forall n,k \in \nat$,
    $$
    \mathbb{E}[\norm{\zeta_{n+k} - \bar \zeta_{n+k}}_\outspace] 
    $$ 
    $$
    \leq  \norm{ M^n}_\outspace \mathbb{E} \left[\norm{ (\zeta_{k} - \bar \zeta_{k} )}_\outspace \right] + \sum^{n-1}_{i=0} \norm{M^{n-1-i}}_\mathcal{Y} \mathbb{E}[\norm{d_{k+i}}_\outspace].
    $$
    %$$
    %\leq \lambda_{\phi}^n \mathbb{E}[\norm{\zeta_{k} - \bar \zeta_{k}}_\outspace] + \frac{1}{1-\lambda_\phi}\max_{i=0,...,n-1}\mathbb{E}[\norm{d_{k+i}}_\outspace]
  %  $$
By Gelfand's formula we have $\lim_{n \to \infty} \norm{M^k}^{\frac{1}{k}} = \rho(M) < 1$ for any matrix norm $\norm{.}$. This implies that there exists $n_1$ such that for all $n \geq n_1$: $\norm{M^n}_\outspace \leq \norm{M^n}_\outspace^{\frac{1}{n}} < \lambda_\phi < 1$ for some $\lambda_\phi \in (0,1)$. Utilising this relation and matrix-vector inequalities, we obtain the following inequalities: let $n\geq n_1$, there exists $n_2 \in \nat \cup \{0 \}$ such that $n = n_1 \floor{\frac{n}{n_1}} + n_2$:
$$
\norm{ M^{n}}_\outspace  = \norm{ M^{ n_1 (\floor{\frac{n}{n_1}} - 1) } M^{n_1 + n_2} }_\outspace \leq \norm{ M^{ n_1 }}_\outspace^{(\floor{\frac{n}{n_1}} - 1) } \norm{M^{n_1 + n_2} }_\outspace \leq \lambda_\phi^{(\floor{\frac{n}{n_1}}-1)} \lambda_\phi = \lambda_\phi^{\floor{\frac{n}{n_1}}}.
$$
Substituting this inequality into the bound given above, we obtain:
$$
 \norm{ M^{n}}_\outspace \mathbb{E} \left[\norm{ (\zeta_{k} - \bar \zeta_{k} )}_\outspace \right] + \sum^{n-1}_{i=0} \norm{M^{n-1-i}}_\mathcal{Y} \mathbb{E}[\norm{d_{k+i}}_\outspace]
$$
%$$
%\leq \norm{ M^{n_1}}_\outspace^{\floor{\frac{n}{n_1}}} \mathbb{E} \left[\norm{ (\zeta_{k} - \bar \zeta_{k} )}_\outspace \right] + \max_{i=0,...,n-1}\mathbb{E} \left[\norm{d_{k+i}}_\outspace \right] \left(\sum^{n_1-1}_{i=0} \norm{M^{i}}_\mathcal{Y} + \sum^{\ceil{\frac{n-1}{n_1}}}_{i=1} n_1 \norm{M^{n_1}}_\mathcal{Y}^{i} \right)
%$$
 $$
    \leq  \lambda_\phi^{\floor{\frac{n}{n_1}}} \mathbb{E}[\norm{ (\zeta_{k} - \bar \zeta_{k} )}_\outspace] + \max_{i=0,...,n-1}\mathbb{E}[\norm{d_{k+i}}_\outspace] \left(K_{n_1} + \sum^{\ceil{\frac{n-1}{n_1}}}_{i=1} n_1 \lambda_\phi^i \right)
    $$
$$
    \leq  \lambda_\phi^{\floor{\frac{n}{n_1}}} \mathbb{E}[\norm{ (\zeta_{k} - \bar \zeta_{k} )}_\outspace] + \max_{i=0,...,n-1}\mathbb{E}[\norm{d_{k+i}}_\outspace] \left(K_{n_1} + \frac{n_1}{1- \lambda_\phi} \right)
    $$
\noindent where $K_{n_1} := \sum^{n_1-1}_{i=0}  \norm{M^{i}}_\mathcal{Y} $
By Lemma \ref{lem:mult online}, we have that $\mathbb{E}[\norm{d_{n}}_\outspace]$ converges to 0 as n goes to infinity. Therefore, $\exists k_0 \in \nat$ such that $\forall k \geq k_0$,
    $$
    \left(K_{n_1} + \frac{n_1}{1- \lambda_\phi} \right)\max_{i=0,...,n-1}\mathbb{E}[\norm{d_{k+i}}_\outspace] \leq \frac{\epsilon}{3}.
    $$
Let $m_0 := \max\{n_0, k_0 \}$. There exists $q_0\in \nat$ such that
    $$\lambda_{\phi}^{\floor {\frac{q_0}{n_1}}} \mathbb{E}[\norm{\zeta_{m_0} - \bar \zeta_{m_0}}_\outspace] < \frac{\epsilon}{3}.$$
Let $M := m_0 + q_0$. Combining the above steps, we have that
    for all $ m > M$, there exists $n\geq q_0$ such that $m = m_0 + n$. This implies
    $$\mathbb{E}[\norm{\zeta_{m}}_\outspace] \leq  \norm{ \bar \zeta_{m}}_\outspace + \mathbb{E}[\norm{\zeta_{m} - \bar \zeta_{m}}_\outspace] $$
$$
\leq \frac{\epsilon}{3} + \lambda_{\phi}^{\floor{\frac{q_0}{n_1}}} \mathbb{E}[\norm{\zeta_{m_0} - \bar \zeta_{m_0}}_\outspace] +  \left(K_{n_1} + \frac{n_1}{1- \lambda_\phi} \right)\max_{i=0,...,n-1}\mathbb{E}[\norm{d_{m_0+i}}_\outspace] 
$$
$$
\leq \frac{\epsilon}{3} + \frac{\epsilon}{3} + \frac{\epsilon}{3} = \epsilon.
$$   
As the choice of $\epsilon$ was arbitrary, this concludes the proof.
\end{proof}

 \subsection{Example - model-reference adaptive control of a single pendulum}
As a simple illustration, we replicate a modification of the model-reference adaptive control example in \cite{calliess2020lazily}. Here, we control an Euler discretisation of a torque-actuated single pendulum in set-point control mode: 

We consider a torque controlled pendulum where forces $u$ can be applied to the joint of the pendulum. The angle of the pendulum is called $q$. We define a state $x=[q\dot q]$. In continuous-time, it's dynamics are given by the ODE $\ddot q = f(x) +u$ where $f(x) = - \sin (q) - \dot q$ may be uncertain a priori and hence, needs to be learned online while we control. 

\begin{figure*}[t]
\centering
\includegraphics[width=0.32\textwidth]{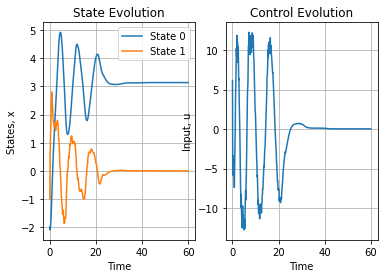}  
\includegraphics[width=0.32\textwidth]{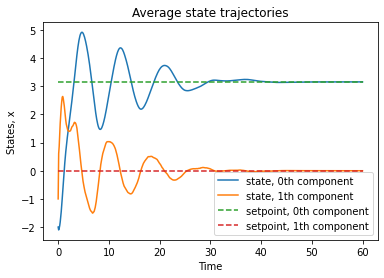} 
	\includegraphics[width=0.32\textwidth]{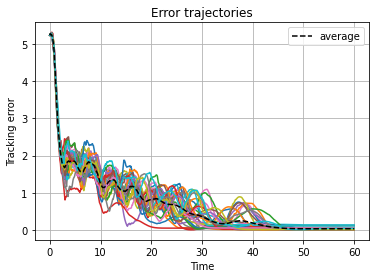}%
	\caption[Illustration of the pendulum control example.]{ 
 %A single run.
 %Average state trajectories across 30 repetitions of the experiment.
 %Tracking error trajectories for the 30 repetitions.
 Illustration of the pendulum control example. A single run is depicted in the leftmost figure showing how the controller learns to drive the state to the set-point in spite of the noise and initially uncertain dynamics. Note, ``Time'' is simulation time $t = \tinc n$ [sec.] for discrete time steps $n=0,1,...$. The second figure shows how the mean trajectories, averaged over 30 repetitions (each with new draws from the noise distribution), converge to the set-point. An illustration of our theory, predicting vanishing tracking errors in the mean, is depicted in the rightmost figure: For each repetition of the experiment, the colored lines show the error trajectories $ (\norm{\xi({\tinc n}) - x(\tinc n)})_{n=0,1,2,\dots}$ as well as their empirical mean (black dashed line). }	\label{Fig:pend_ex}%
\end{figure*}
As explained in Section 4 of \cite{calliess2020lazily}, using online learning of noisy measurements of angular accelerations (but assuming full state observability) we can use Lipschitz interpolation to learn a model $\hat f_n$ at time step $n$ define a control law $u(x) = - \hat f_n(x) - K_1 x_1 - K_2 x_2$ with gains $K_1,K_2 > 0$ such that the closed-loop error dynamics becomes:
\begin{align}\label{eq:errordynmrac_nIMRACdiscrete}
	\zeta_{n+1}  &= \phi(\zeta_n) + \tinc d_n\\
 &= \underbrace{M \zeta_n}_{=: \phi(\zeta_n)} + \tinc d_n
\end{align}
where $\tinc =0.1$ is a sampling period for the time discretisation, $d_n = f(x_n) - \hat f_n(x_n)$ is the tracking error and $	M = \left(\begin{array}[h]{cc}
				1 &  \, \tinc \\
				-\tinc K_1 & 1- \tinc K_2 
						\end{array}\right) $ is a matrix, where the gain parameters $K_1,K_2=1$ were chosen to render $M$ stable (i.e. such that its spectral radius $\rho(M) <1$).    
This renders the closed-loop tracking error dynamics consistent with the one considered in Corollary \ref{cor: online theorem} and as previously discussed, implies that the error dynamics are an eventual contraction. 

For this experiment, we chose Lipschitz interpolation with a fixed Lipschitz constant $L=11$ which is a Lipschitz constant of the true dynamics. To learn $f$ the Lipschitz interpolator had access to acceleration corrupted by uniformly distributed noise drawn i.i.d. from the interval [-2,2] given a performance example in a relatively low signal-to-noise ration setting. Starting in initial state $x_0= [-2., -1.]$, the controller was given a set-point reference $\xi_n = [2 \pi,0], \forall n \in \nat$. An example run of the controller as well as empirical measurements of the tracking dynamics across 30 trials of the experiments for different noise realisations are given in Figure \ref{Fig:pend_ex}. Note, consistent with our theory, the plots show how the tracking error appears to vanish in the mean (and in fact for all realisations).

Before we conclude, we will need to point out some limitations to the online control application given in this section:
\begin{rem}
Firstly, our example assumed knowledge of the Lipschitz constant. While not an uncommon assumption, we recognise that having to know the true Lipschitz constant is a practical limitation. Therefore methods such as LACKI (\cite{calliess2020lazily}) or POKI (\cite{calliess2017lipschitz}) could also be employed to incorporate full black-box learning. Our example however, merely serves as a simple illustration of our currently developed theory. 
Secondly, we assumed that states were observable but that only accelerations were noisy. 

Extending our results to learning-based control settings that involve Lipschitz constant parameter estimation and extending the theory to realistic settings of noisy state observations would, in our opinion, be an interesting direction to investigate in future work.
\end{rem}

\section{Conclusion}

In conclusion, this paper has provided a comprehensive investigation into the asymptotic convergence properties of general Lipschitz interpolation methods in the presence of bounded stochastic noise. Through our analysis, we have established probabilistic consistency guarantees for the classical approach within a broad context. Furthermore, by deriving upper bounds on the uniform convergence rates, we have aligned these bounds with the well-established optimal rates of non-parametric regression observed in similar settings. These rates provide a precise characterisation of the impact of the behaviour of the noise at the boundary of its support on the non-parametric uniform convergence rates  and are, as far as the authors of this paper are aware, novel to the literature.

These established bounds can also serve as useful tools for the comparative asymptotic assessment of Lipschitz interpolation against alternative non-parametric regression techniques determining the circumstances under which Lipschitz interpolation frameworks can be anticipated to be asymptotically better or worse. In particular,  an explicit condition on the noise's behaviour at the boundary of its support can be utilised to predict this out-or under-performance.

Extending our work, we have expanded our asymptotic results to consider online learning in discrete-time stochastic systems. The additional consistency guarantees we provide in this context carry practical significance, as we show how they can be utilised to establish closed-loop stability assurances for a simple online learning-based controller in the setting of model reference adaptive control. We note that these asymptotic results also hold for the worst-case upper and lower bounds provided by the ceiling and floor predictors of the Lipschitz interpolation framework. This implies that even the most conservative adaptive controllers relying on worst-case bounds of Lipschitz interpolation methods will consider the true underlying dynamics in the long run.

Finally, we have provided a brief theoretical study of the fully data-driven LACKI framework (\cite{calliess2020lazily}) which extends classical Lipschitz interpolation by incorporating a Lipschitz constant estimation mechanism into the algorithm. We show asymptotic consistency of both the Lipschiz constant estimation method and the extended framework which can serve to further theoretically motivate the use of the LACKI  in practice.

Two research avenues are of interest with respect to the theoretical results derived in this paper. The first would be the derivation of lower bounds on the non-parametric convergence rates under the same settings and assumptions as the ones utilised in Theorem \ref{thm:conv rate}. These would, ideally but not unexpectedly, given existing results on the optimal convergence rates of non-parametric boundary regression by \cite{jirak2014adaptive}, demonstrate the optimality of the upper bounds on the non-parametric convergence rates developed in this paper. The second potential research direction would be to extend the convergence rate upper bounds stated in Theorem \ref{thm:conv rate} such that they hold for the practical and fully data-driven extensions of Lipschitz interpolation such as LACKI (\cite{calliess2020lazily}) or POKI \cite{calliess2017lipschitz}. 

\bibliography{bibliography}

\appendix

\section{Additional Results (Convergence rate of tracking error)}

We provide the theoretical convergence rates obtained for the tracking error in the application to online learning-based control. As noted in Section \ref{sec: online control}, this result is not generally applicable as verification of the "regular sampling" condition defined in Definition \ref{def:reg samp} is difficult to do in practice.

\begin{cor}\label{cor:multi online} 
Assume that the setting and assumptions of Corollary \ref{lem:mult online} hold. Assume furthermore
that the stochastic control law $u_{n+1} := u(x_n,\hat f_n, \mathcal{D}_n)$ is defined such that $(x_n)_{n\in\nat}$ is regularly sampled on a set $\bar \inspace \subset \inspace$ that satisfies Assumption \ref{ass: input space2} and that the noise vectors $(\epsilon_n)_{n\in\nat}$ are component-wise independent. Then,
$$ 
\limsup_{n \to \infty} \mathbb{E}[a_n^{-1}\norm{f(x_{n+1}) - \hat f_n(x_{n+1})}_\outspace ] < \infty.
$$
where $(a_n)_{n\in\nat} := ((n^{-1}log(n) )^\frac{\alpha}{d+\eta\alpha} )_{n \in \nat}$.
\end{cor}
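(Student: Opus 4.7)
\medskip

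\noindent\textbf{Proof plan for Corollary \ref{cor:multi online}.}
The plan is to reduce the multidimensional statement to a component-wise application of Corollary \ref{cor:online conv rate}. First I would use Assumption \ref{ass:outspace norm2} to rewrite
\[
\norm{f(x_{n+1}) - \hat f_n(x_{n+1})}_{\outspace}
\;=\; \sum_{j=1}^{l} \bigl| f^j(x_{n+1}) - \hat f_n^j(x_{n+1}) \bigr|,
\]
so that it suffices to establish the claimed rate separately for each coordinate $j \in \{1,\dots,l\}$ and then sum. By the linearity of expectation, the corollary then follows from
$\limsup_{n\to\infty} \mathbb{E}\!\left[ a_n^{-1} |f^j(x_{n+1}) - \hat f_n^j(x_{n+1})| \right] < \infty$
for each $j$.

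Next I would verify that each scalar coordinate problem meets the hypotheses of Corollary \ref{cor:online conv rate}. By Definition \ref{def:KILmulti}, the coordinate predictor $\hat f_n^j$ is precisely the scalar Lipschitz interpolation of Definition \ref{def:KILsimplified} with hyperparameter $L \geq L^*$ applied to the data $(x_i, \tilde f_i^j)_{i \leq n}$. Under Assumption \ref{ass:outspace norm2}, $|f^j(x) - f^j(x')| \leq \norm{f(x) - f(x')}_1 \leq L^* \metric(x,x')$, so $f^j \in \overline{Lip}(L^*,\metric,M^*)$ in the scalar sense. By the standing component-wise independence assumption, the scalar noise sequence $(e_k^j)_{k \in \nat}$ is i.i.d., and Assumption \ref{ass:noise lip2} (together with the coordinate-wise analogue of Assumption \ref{ass:noise lip} inherited from the setting) yields the bounded-support and boundary-tail conditions required by Corollary \ref{cor:online conv rate}. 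Finally, the regularity assumption and Assumption \ref{ass: input space2} on $\bar \inspace$ are hypotheses we have explicitly postulated on the shared input trajectory $(x_n)_{n\in\nat}$, so they apply verbatim to each coordinate problem.

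With all hypotheses verified, I would invoke Corollary \ref{cor:online conv rate} in each coordinate to obtain
\[
\limsup_{n\to\infty} \mathbb{E}\!\left[ a_n^{-1} \bigl| f^j(x_{n+1}) - \hat f_n^j(x_{n+1}) \bigr| \right] \;<\; \infty, \qquad j=1,\dots,l,
\]
and summing these $l$ finite limits yields the desired bound on $\mathbb{E}[a_n^{-1} \norm{f(x_{n+1}) - \hat f_n(x_{n+1})}_\outspace]$.

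The only non-routine point I anticipate is the \emph{shared-trajectory} subtlety: the same sequence $(x_n)_{n\in\nat}$ serves as the input trajectory for every coordinate problem, and the control law may depend on the joint predictor $\hat f_n$ rather than on any single coordinate. However, the proof of Corollary \ref{cor:online conv rate} only uses the regularity of the sampling distribution of $(x_n)$ on $\bar\inspace$ and independence of the scalar noise from the past inputs; it does not require that the control law be a function of a single coordinate predictor. Combined with the component-wise independence of $(e_k)_{k\in\nat}$, this is enough to treat each coordinate as an independent scalar instance, so no further work is needed.
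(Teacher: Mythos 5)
Your proposal is correct and follows essentially the same route as the paper's own (sketched) proof: decompose the $1$-norm error into coordinates, use component-wise independence of the noise to treat each coordinate as a scalar instance, and invoke Corollary \ref{cor:online conv rate} per coordinate before summing. Your explicit discussion of the shared-trajectory subtlety is a point the paper leaves implicit, but it does not change the argument.
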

\begin{proof}(sketch)
    Applying the same arguments as in the proof of Lemma \ref{lem:mult online}, it is sufficient to consider: $\forall j \in \{1,...,l\}$
    $$
\limsup_{n \to \infty} \mathbb{E}[ a_n^{-1}|f^j(x_{n+1}) - \hat f^j_n(x_{n+1})|].
$$
Since the noise is component-wise independent, we can apply the arguments utilised in the proof of Corollary \ref{cor:online conv rate} for all $j\in \{1,...,l\}$ and conclude the proof.
\end{proof}

\begin{thm} \label{thm: online theorem2}
    Assume the settings and assumptions of Theorem \ref{thm: online theorem} hold.
    If the stochastic control law $u_{n+1} := u(x_n,\hat f_n, \mathcal{D}_n)$ is such that $(x_n)_{n\in\nat}$ is regularly sampled on a set $\bar \inspace \subset \inspace$ that satisfies Assumption \ref{ass: input space2} and the noise vectors $(\epsilon_n)_{n\in\nat}$ are component-wise independent. Then,
$$
\limsup_{n\to\infty}\mathbb{E}[a_n^{-1}\norm{e_n-e_*}_\outspace] < \infty
$$
where $(a_n)_{n\in\nat} := ((n^{-1}log(n) )^\frac{\alpha}{d+\eta\alpha} )_{n \in \nat}$.
\end{thm}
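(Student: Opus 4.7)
The plan is to combine the tracking-error bound developed in the proof of Theorem \ref{thm: online theorem} with the quantitative one-step prediction rate from Corollary \ref{cor:multi online}. Recall from that proof the iterative inequality, valid for any $k,n \in \nat$,
\begin{equation*}
\mathbb{E}[\norm{\zeta_{k+n} - \bar \zeta_{k+n}}_\outspace] \leq \lambda_\phi^{n} \,\mathbb{E}[\norm{\zeta_{k} - \bar \zeta_{k}}_\outspace] + \frac{1}{1-\lambda_\phi}\max_{i=0,\dots,n-1}\mathbb{E}[\norm{d_{k+i}}_\outspace],
\end{equation*}
together with the exponential convergence of the nominal trajectory $\norm{\bar \zeta_m - \zeta_*}_\outspace \leq \lambda_\phi^{m}\norm{\zeta_0 - \zeta_*}_\outspace$ guaranteed by the contraction property of $\phi$. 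The triangle inequality $\mathbb{E}[\norm{\zeta_m - \zeta_*}_\outspace] \leq \norm{\bar \zeta_m - \zeta_*}_\outspace + \mathbb{E}[\norm{\zeta_m - \bar \zeta_m}_\outspace]$ will then be the starting point.

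First, I would observe that the nominal term $\norm{\bar \zeta_m - \zeta_*}_\outspace = O(\lambda_\phi^{m})$ decays exponentially and is therefore $o(a_m)$ for the polynomial-logarithmic rate $a_m$, so it contributes nothing to the limsup. The whole effort then reduces to bounding $\mathbb{E}[\norm{\zeta_m - \bar \zeta_m}_\outspace]$ at rate $a_m$. To do this, I would apply the iterative bound above with a carefully chosen splitting $m = k + n$, taking $n := \lceil C \log m \rceil$ for a constant $C > 1/\log(1/\lambda_\phi)$ so that $\lambda_\phi^{n} = O(m^{-1}) = o(a_m)$, and $k := m - n$.

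For the driving term $\max_{i=0,\dots,n-1}\mathbb{E}[\norm{d_{k+i}}_\outspace]$, Corollary \ref{cor:multi online} (applied under the regular-sampling and component-wise independent noise assumptions that are part of the hypotheses here) yields $\mathbb{E}[\norm{d_j}_\outspace] = O(a_j)$ uniformly over $f \in \overline{Lip}(L^*,\metric,M^*)$. Since $a_j$ is (essentially) non-increasing in $j$, the maximum over $i \in \{0,\dots,n-1\}$ is attained near $i = 0$, giving $O(a_k)$. The ratio $a_k / a_m$ tends to $1$ as $m \to \infty$ because $k = m - O(\log m)$ and the exponent $\alpha/(d+\eta\alpha)$ is fixed, so this contribution is indeed $O(a_m)$. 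Finally, $\mathbb{E}[\norm{\zeta_k - \bar \zeta_k}_\outspace]$ is uniformly bounded by Theorem \ref{thm: online theorem} (it even vanishes), so the first summand $\lambda_\phi^n \mathbb{E}[\norm{\zeta_k - \bar \zeta_k}_\outspace]$ inherits the $o(a_m)$ bound from our choice of $n$. Assembling these pieces yields $\mathbb{E}[\norm{\zeta_m - \zeta_*}_\outspace] \leq C' a_m$ for sufficiently large $m$ and some $C' > 0$, which is the desired limsup bound.

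The main obstacle I anticipate is the bookkeeping around the splitting $m = k + n$: one must simultaneously ensure that $n$ is large enough for the contraction factor $\lambda_\phi^n$ to beat the polynomial rate $a_m$, yet small enough that the ``recent'' prediction errors $\mathbb{E}[\norm{d_{k+i}}_\outspace]$ have not yet fallen below $a_m$ — the logarithmic choice $n = \Theta(\log m)$ is what threads this needle, and verifying that $a_k / a_m \to 1$ relies on the specific polynomial-logarithmic shape of $a_n$. A secondary subtlety is that Corollary \ref{cor:multi online} gives the rate for the one-step prediction error evaluated at the next input $x_{j+1}$, which is consistent with the definition $d_j := f(x_j) - \hat f_{j-1}(x_j)$ used in the closed-loop recursion; this alignment just needs to be stated explicitly.
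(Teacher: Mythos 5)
Your proposal is correct and follows essentially the same route as the paper's (sketch) proof: reuse the recursion from Theorem \ref{thm: online theorem}, note that the nominal and contraction terms decay geometrically and are therefore negligible against $a_m$, and bound the dominant term $\max_i \mathbb{E}[\norm{d_{k+i}}_\outspace]$ via Corollary \ref{cor:multi online}. Your explicit splitting $m = k + n$ with $n = \Theta(\log m)$ and the check that $a_k/a_m \to 1$ supply the quantitative bookkeeping that the paper's two-line sketch leaves implicit, so your write-up is in fact the more complete of the two.
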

\begin{proof}(sketch)
    Follows from applying the proof of Theorem \ref{thm: online theorem} and noting that the slowest converging term at the end of the proof is given by
    $$\frac{1}{1-\lambda_\phi}\max_{i=0,...,n-1}\mathbb{E}[\norm{d_{m_0+i}}_\outspace].$$
    This term can be upper bounded by applying Corollary \ref{cor:multi online} which therefore provides the convergence rate and concludes the proof.
\end{proof}

\section{Proof of Theorem \ref{thm:conv rate}} \label{sec: proof of thm: conv rate}
\begin{proof}
From the proof of Theorem \ref{lemma:general}, we have $\forall f \in Lip(L^*, \metric)$, $x\in \inspace$, 
$$
 |\hat f_n(x) - f(x)|
% \leq \frac{1}{2} |\min_{i=1,...,N_n}\{e_i + (f(s_i) - f(x) ) + L \metric(x,s_i)\} 
$$
%$$
%+ \frac{1}{2} \max\limits_{i=1,...,N_n} \{ e_i + (f(s_i) - f(x) )  - L\metric(x,s_i)\}|.
%$$
$$
\leq \max \Big\{ \min_{i=1,...,N_n} \{ \frac{e_i}{2} + \frac{L^*+L}{2}\metric(x,s_i)\} + \max_{i=1,...,N_n} \{\frac{e_i}{2}  \} , 
$$
$$
- \min_{i=1,...,N_n} \{\frac{e_i}{2} \} - \max\limits_{i=1,...,N_n} \{ \frac{e_i}{2} - \frac{L^*+L}{2}\metric(x,s_i)\}  \Big\} .
$$
Consider the minimal covering of $\inspace$ of radius $R_n := a_n = \left(n^{-1}log(n)\right)^\frac{\alpha}{d+\eta \alpha}$ with respect to $\metric$ denoted $Cov(R_n)$ and the associated set of hyperballs; $\mathcal{B}_n$. Assuming that $n$ is large enough such that every hyperball in $\mathcal{B}_n$ contains at least one input of $G_n^\inspace$, we have that the following upper bound holds:
$$
|\hat f_n(x) - f(x)| 
$$
$$
\leq \max \left\{ \min_{s_i \in  B^x \cap  G_n^\inspace} \{ \frac{e(s_i)}{2}\},
\min_{s_i \in B^x \cap  G_n^\inspace} \{-\frac{e(s_i)}{2} \}
\right \} + \frac{\obserrbnd}{2}+ (L^*+L) R_n  
$$ %\jcom{Check these equations: what is B? is it $\mathcal B_n$}
%\julcom{Sorry typo, it should be $B^x$.}
where with abuse of notation, $e(s_i)$ denotes the noise variable associated with the input $s_i$ and $B^x  \in \mathcal{B}_n$ such that $x \in B$.
For all $n\in\nat$, we define the following random variable and event
\begin{align*}
A_n & := \max_{B \in \mathcal{B}_n} \max \left\{ \min_{s_i \in  B \cap  G_n^\inspace} \{ \frac{e(s_i)}{2} \},
 - \max_{s_i \in B \cap  G_n^\inspace} \{ \frac{e(s_i)}{2} \} \right\}+ \frac{\obserrbnd}{2} \\ 
 E_n & := \left\{ \forall B \in \mathcal{B}_n, |\{i\in [n] \}, s_i \in B\}| > 0   \right\}.
\end{align*}
Then, from ($\star$) given at the end of the proof we have that it is sufficient to consider 
$$\sup_{f \in Lip(L^*, \metric)} \mathbb{E}\left[a_n^{-1} \norm{\hat f_n - f}_\infty \Big| E_n \right] \pmeas(E_n)
$$ in order for Theorem \ref{thm:conv rate} to hold. For $n \in \nat$ sufficiently large such that $ \pmeas(E_n) > 0$ (see ($\star$)), we can apply the upper bound on $|\hat f_n(x) - f(x)|$ derived above:
$$ \sup_{f \in Lip(L^*, \metric)} \mathbb{E} \left[a_n^{-1}\norm{\hat f_n - f}_\infty \Big| E_n \right] 
= 
\sup_{f \in Lip(L^*, \metric)} \mathbb{E}\left[a_n^{-1}\sup_{x\in\inspace}|f_n(x) - f(x) | \Big| E_n \right] 
$$
$$\leq a_n^{-1} \sup_{f \in Lip(L^*, \metric)} \mathbb{E} \left[\sup_{x\in\inspace}
\max \left \{ \min_{s_i \in  B^x \cap  G_n^\inspace} \{ \frac{e(s_i)}{2}\}
,
\min_{s_i \in B^x \cap  G_n^\inspace} \{-\frac{e(s_i)}{2} \}
\right \} + \frac{\obserrbnd}{2}+ (L^*+L) R_n \Big| E_n \right] 
$$
$$= a_n^{-1} (L^*+L) R_n  
+
\mathbb{E} \left[a_n^{-1}\max_{B\in\mathcal{B}_n}
\max \left\{ \min_{s_i \in  B \cap  G_n^\inspace} \{ \frac{e(s_i)}{2}\},
\min_{s_i \in B \cap  G_n^\inspace} \{-\frac{e(s_i)}{2} \}
\right\} + \frac{\obserrbnd}{2}  \Big| E_n \right] 
$$
$$
= (L^*+L)a_n^{-1}R_n + \mathbb{E}\left[a_n^{-1}A_n \Big| E_n \right]. 
$$
By definition of $R_n$, the first term: $(L^*+L) a_n^{-1}R_n = (L^*+L)$ is bounded for all $n \in \nat$. We can therefore focus on upper bounding the second term:
$$
\mathbb{E} \left[a_n^{-1}A_n \Big| E_n \right]\pmeas(E_n) = \mathbb{E}\left[a_n^{-1}A_n \right].$$ 
Using $0 \leq A_n \leq 2\obserrbnd$ with probability 1, we have $\forall C_0>0$,
$$a_n^{-1}A_n \leq  C_0 1_{\{a_n^{-1}A_n \leq C_0 \}} + 2\obserrbnd a_n^{-1} 1_{\{a_n^{-1}A_n > C_0 \}}$$
with probability $1$. This implies that
$$\mathbb{E}[a_n^{-1}A_n] \leq
C_0 + 2\obserrbnd a_n^{-1} \pmeas(A_n >C_0 a_n).
$$
It is therefore sufficient to show that $\exists C_0>0$ such that 
$$\limsup_{n \to \infty} \sup_{f \in Lip(L^*,\metric)}a_n^{-1}\pmeas(A_n >C_0 a_n) < \infty.$$
We have
$$
\pmeas(A_n >C_0 a_n)
= 
1 - \pmeas ( \forall B \in \mathcal{B}_n:  \min_{s_i \in  B \cap  G_n^\inspace} e(s_i) \in I_1, 
\max_{s_i \in B \cap  G_n^\inspace} e(s_i) \in I_2 )
$$
$$
\stackrel{(\star\star)}{\leq} 1 - \prod_{ B \in \mathcal{B}_n}\pmeas \left(  \min_{s_i \in  B \cap  G_n^\inspace} e(s_i) \in I_1, 
\max_{s_i \in B \cap  G_n^\inspace} e(s_i) \in I_2 \right)
$$
$$
\leq 
1 - \pmeas \left(  \min_{i \in 1,..., N_{\mathcal{B}_n}} e_i \in I_1, 
\max_{i \in 1,..., N_{\mathcal{B}_n}} e_i \in I_2 \right )^{|\mathcal{B}_n|}
$$
where $I_1 := [-\obserrbnd, -\obserrbnd + 2C_0 a_n )$, $I_2 := (\obserrbnd - 2C_0 a_n, \obserrbnd  ]$ and $N_{\mathcal{B}_n} := \min_{B \in \mathcal{B}_n} |B \cap G_n^\inspace |$. The second to last inequality follows from arguments given in $(\star \star)$ provided at the end of the proof.  
For $n$ large enough such that $2C_0 a_n <\bar \epsilon$, we can apply Assumption \ref{ass:noise lip} to simplify the left hand expression as follows;
$$\pmeas \left(  \min_{i \in 1,..., N_{\mathcal{B}_n}} e_i \in I_1, 
\max_{i \in 1,..., N_{\mathcal{B}_n}} e_i \in I_2 \right)
$$
$$
\geq  \pmeas \left(  \min_{i \in 1,..., N_{\mathcal{B}_n}} e_i \in I_1 \right)
\cdot \pmeas \left(
\max_{i \in 1,..., N_{\mathcal{B}_n}} e_i \in I_2 | \min_{i \in 1,..., N_{\mathcal{B}_n}} e_i \in I_1 \right) 
$$
$$
\geq \left(1  -  \left(1 - \gamma (2 C_0 a_n)^\eta \right)^{N_{\mathcal{B}_n}} \right) \left(1  -  \left(1 - \gamma (2 C_0 a_n)^\eta \right) ^{N_{\mathcal{B}_n} -1} \right)
$$
$$
\geq \left(1  -  2 \left(1 - \gamma (2 C_0 a_n)^\eta \right)^{N_{\mathcal{B}_n}-1} \right)^2.
$$
Therefore, we have 
$$
\sup_{f \in Lip(L^*,\metric)}a_n^{-1}\pmeas(A_n >C_0 a_n)
$$
$$
\leq a_n^{-1} \left(1 - \left(1  -  2(1 - \gamma (2 C_0 a_n)^\eta )^{N_{\mathcal{B}_n}-1} \right)^{2|\mathcal{B}_n|} \right) .
$$
$$
\leq a_n^{-1} \left(1 - \left(1  -  2(1 - \gamma (2 C_0 a_n)^\eta )^{N_{\mathcal{B}_n}-1} \right)^{\frac{C_1}{{a_n}^{\frac{d}{\alpha}}}} \right).
$$
where we used the fact that there exists a constant $ C_1 > 0$ (that can depend on $d$) such that $2 |\mathcal{B}_n| \leq \frac{ C_1}{{R_n}^{\frac{d}{\alpha}}} = \frac{C_1}{{a_n}^{\frac{d}{\alpha}}}$ which is a modification of (\cite{wu2017lecture}, Theorem 14.2) that follows from the assumed convexity of $\inspace$. By Lemma \ref{lem:sequence proof},
in order for the above expression to be bounded, it is sufficient that
$2(1 - \gamma (2 C_0 a_n)^\eta)^{N_{\mathcal{B}_n}-1}$ behaves like $C_2' {a_n}^{({\frac{d}{\alpha}}+1)}$ for an arbitrary $C_2'>0$ as $n$ goes to infinity. More precisely, let $C_2' = 1$, it is sufficient to have:
$$
2(1 - \gamma (2 C_0 a_n)^\eta)^{N_{\mathcal{B}_n}-1} \leq {a_n}^{({\frac{d}{\alpha}}+1)}$$
$$
\iff 
N_{\mathcal{B}_n} \geq 1+ ({\frac{d}{\alpha}}+1)\frac{\log \left(  a_n \right)}{\log \left(1- \gamma (2 C_0 a_n)^\eta \right)} 
$$
as $n$ goes to infinity.
The right-hand expression can be re-expressed as the series expansion: 
$$\frac{d + \alpha }{\alpha \gamma (2 C_0)^\eta} \frac{1}{{a_n}^\eta} \log (\frac{1}{a_n})  + O(\log(\frac{1}{ a_n}))$$ 
as $a_n$ goes to $0$. Therefore, for any $C_2 > \frac{d + \alpha }{\alpha \gamma(2 C_0)^\eta}$ and $n>0$ large enough, we have $1+ ({\frac{d}{\alpha}}+1)\frac{\log(a_n)}{\log(1- \gamma (2 C_0 a_n)^\eta )} < C_2 \log(\frac{1}{a_n})\frac{1}{{a_n}^\eta}$ and it suffices to have
$$ 
N_{\mathcal{B}_n} \geq C_2 \log(\frac{1}{a_n})\frac{1}{{a_n}^\eta} 
$$
as $n$ goes to infinity in order for $\lim_{n\to\infty}\sup_{f \in Lip(L^*,\metric)}a_n^{-1}\pmeas(A_n >C_0 a_n)$ to be bounded: 
\newline
If $N_{\mathcal{B}_n} \geq C_2 \log(\frac{1}{a_n})\frac{1}{{a_n}^\eta}$, 
$$
\limsup_{n \to \infty} \sup_{f \in Lip(L^*,\metric)}a_n^{-1}\pmeas(A_n >C_0 a_n)
\leq a_n^{-1} \left(1 - \left(1  -  {a_n}^{({\frac{d}{\alpha}}+1)} \right)^{\frac{C_1}{{a_n}^{\frac{d}{\alpha}}}} \right) \leq 2 C_1
$$
where the last inequality follows from Lemma \ref{lem:sequence proof}.

Therefore, the final step of the proof is to show that $ 
N_{\mathcal{B}_n} \geq C_2 \log(\frac{1}{a_n})\frac{1}{{a_n}^\eta} 
$ occurs with a probability that converges to $1$ at a rate of $a_n$ as n goes to infinity. 

More precisely, let $ n \in \nat$ and fix an arbitrary constant $C_2 > \frac{d + \alpha }{\alpha \gamma(2 C_0)^\eta}$ based on the condition given above (note that $C_0>0$ can be set arbitrarily large).
$$
S_n := \left\{\forall B \in \mathcal{B}_n: \text{ } |\{i\in [n] \}, s_i \in B\}| > C_2 \log(\frac{1}{a_n})\frac{1}{{a_n}^\eta} \right\}
$$ i.e. the event that there is more than $C_2 \log(\frac{1}{a_n})\frac{1}{{a_n}^\eta}$ queries in each hyperball in $\mathcal{B}_n$. Utilising the asymptotic bound developed in the first part of the proof, we 
have by the law of total probability: % and Lemma \ref{lem:sequence proof}:  
$$
\limsup_{n \to \infty} \mathbb{E}[a_n^{-1}A_n ] 
\leq  
C_0 + 2\obserrbnd \limsup_{n \to \infty}  a_n^{-1} \pmeas(A_n >C_0 a_n)
$$
$$\leq C_0 + 2\obserrbnd\limsup_{n \to \infty} a_n^{-1} \left(\pmeas(A_n >C_0 a_n |S_n)+ \pmeas(S_n^c) \right)
\leq 
C_0 + 4\obserrbnd C_1 + \limsup_{n \to \infty} a_n^{-1}  \pmeas(S_n^c)
%\limsup_{n \to \infty} a_n^{-1}(1 - (1  -  2(1 - 2\gamma C_0 a_n )^{N_{\mathcal{B}_n}-1} )^{\frac{C_1}{{a_n}^{\frac{d}{\alpha}}}}) \leq C_2
$$
where the last inequality can be obtained by applying Lemma \ref{lem:sequence proof}.

To conclude the proof, we need to show that $\limsup_{n \to \infty} a_n^{-1}  \pmeas(S_n^c)$ is bounded. We have (denoting $b_n := \log(\frac{1}{a_n})\frac{1}{{a_n}^\eta}$ to alleviate notation):
$$ 
\pmeas(S_n) = \pmeas \left(\forall B \in \mathcal{B}_n:|\{i\in [n] \}, s_i \in B\}| > C_2 b_n \right)
$$
$$ 
\geq \prod_{B \in \mathcal{B}_n}\pmeas \left(|\{i\in [\floor{\frac{n}{2}}] \}, s_i \in B\}| > C_2 b_n \right)
$$
$$
\geq 
\prod_{B \in \mathcal{B}_n} \left(1 - \pmeas \left(|\{i\in [\floor{\frac{n}{2}}] \}, s_i \in B\}| \leq C_2 b_n \right) \right)
$$
%$$  \geq (1 - \pmeas(|\{i\in [\floor{\frac{n}{2}}] \}, s_i \in B\}| \leq C_2 b_n))^{|\mathcal{B}_n|} $$
%
where the first inequality stated above follows from $(\star\star\star)$ (shown at the end of the proof) for $n\in\nat$ if $C_2$ satisfies the condition: $C_2 \leq \frac{d+ \eta \alpha}{ 2 C_1 \alpha }$ where $C_1, d, \alpha, \eta$ are constants. 

Then, Assumption \ref{ass: input space2} on $\inspace$ implies that there $r_0>0, \theta \in(0,1]$ such that  $\forall x \in \mathcal{X}$, $r \in\left(0, r_0\right), \operatorname{vol}\left(B_r(x) \cap \mathcal{X}\right) \geq \theta \operatorname{vol}\left(B_r(x)\right)$. Therefore, for all $n\in\nat$ such that $R_n<r_0$, we have that Assumption \ref{ass: input space2} can be applied to $B\in\mathcal{B}_n$. Using Assumption \ref{ass:sampling}, we have that the random variable defined by $|\{i\in [\floor{\frac{n}{2}}] \}, s_i \in B\}|$ follows a binomial distribution with a success probability $p$ that can be lower bounded by $C_3'\frac{vol(B)}{vol(\inspace)} = C_3'' a_n^{\frac{d}{\alpha}}$ for $ C_3', C_3''>0$ and with expectation: 
$$
\mathbb{E} \left[|\{i\in [\floor{\frac{n}{2}}] \}, s_i \in B\}| \right] 
= \floor{\frac{n}{2}}p
\geq 
\frac{C_3''}{3} n^{\frac{\eta\alpha}{\eta\alpha + d}}\log(n)^{\frac{d}{d+\eta\alpha}} = C_3 n^{\frac{\eta\alpha}{\eta\alpha + d}}\log(n)^{\frac{d}{d+\eta\alpha}}
$$

where $C_3 := \frac{C_3''}{3}$. (Note: the "$3$" denominator was arbitrarily selected in order to remove the ceiling operator in the above equation).

As the only condition on $C_2$ is given by the bound $C_2> \frac{d + \alpha \xi}{\alpha \gamma(2 C_0)^\eta}$, $C_2$ can be set arbitrarily small as $C_0$ can be set arbitrarily large. Therefore, there exists $C_0>0$, $C_2>0$  such that $C_2 \leq \min\{\frac{d+ \eta \alpha}{ 2 C_1 \alpha },  \frac{C_3 (d + \eta \alpha)}{\alpha} \}$ which implies that $(\star \star \star)$ holds and that:
$$C_2 b_n = C_2 \left(\frac{\alpha}{d+\eta\alpha} \right)\log \left(\frac{n}{\log{(n)}}\right) \left(\frac{n}{\log(n)}\right)^{\frac{\eta\alpha}{\eta\alpha + d}} 
$$
$$\leq C_2 \left(\frac{\alpha}{d+\eta\alpha} \right) n^{\frac{\eta\alpha}{\eta\alpha + d}} \log(n)^{\frac{d}{d+\eta\alpha}}
\leq 
C_3 n^{\frac{\eta\alpha}{\eta\alpha + d}}\log(n)^{\frac{d}{d+\eta\alpha}} 
\leq
\frac{\mathbb{E}[|\{i\in [n] \}, s_i \in B\}|]}{2}.$$ 
This last relation implies that we can apply Lemma 1 of \cite{stone1982optimal} to obtain the upper bound:
$$
\pmeas \left(|\{i\in [n] \}, s_i \in B\}| \leq C_2 b_n \right)
\leq 
\pmeas \left(|\{i\in [n] \}, s_i \in B\}| \leq \frac{\mathbb{E} \left[|\{i\in [n] \}, s_i \in B\}| \right]}{2} \right)
$$
$$
\leq \left(\frac{2}{e} \right)^{\frac{\mathbb{E}[|\{i\in [n] \}, s_i \in B\}|]}{2}}
$$
which in turn implies
$$ 
\left(1 - \pmeas \left(|\{i\in [n] \}, s_i \in B\}| \leq C_2 b_n \right) \right)^{|\mathcal{B}_n|}
 \geq 
 \left(1 - \left(\frac{2}{e} \right)^{\frac{\mathbb{E}[|\{i\in [n] \}, s_i \in B\}|]}{2}} \right)^{|\mathcal{B}_n|} .
$$
Plugging this expression back into $\limsup_{n \to \infty} a_n^{-1}  \pmeas(S_n^c)$, we obtain
$$
a_n^{-1}  \pmeas \left(S_n^c \right)
 \leq 
 a_n^{-1} \left(1-\left(1 - (\frac{2}{e} )^{\frac{\mathbb{E}[|\{i\in [n] \}, s_i \in B\}|]}{2}}\right)^{|\mathcal{B}_n|}  \right).
$$
$$ \leq a_n^{-1} \left(1- \left(1 - (\frac{2}{e})^{\frac{C_3}{2} n^{\frac{\eta\alpha}{\eta\alpha + d}}\log(n)^{\frac{d}{d+\eta\alpha}}} \right)^{C_1{a_n}^{\frac{d}{\alpha}}}\right)
$$
which converges to $0$ as $n$ goes infinity and concludes the proof (follows from the exponential speed of convergence of $(\frac{2}{e})^{\frac{C_3}{2} n^{\frac{\eta\alpha}{\eta\alpha + d}}\log(n)^{\frac{d}{d+\eta\alpha}}}$).

($\star$) For completeness we revisit the assumption made in the proof. Recall for all $n\in\nat$,
$$E_n:= \left\{ \forall B \in \mathcal{B}_n, |\{i\in [n] \}, s_i \in B\}| > 0   \right\}.$$ 
Then, by law of total expectation, we have $\forall f \in Lip(L^*, \metric)$, $n \in \nat$ sufficiently large such that $\pmeas(E_n)>0$ (which exists since $\pmeas(E_n)>\pmeas(S_n)\overset{n\to\infty}{\rightarrow} 1$ );
$$ \mathbb{E}[a_n^{-1}\norm{\hat f_n - f}_\infty ]
$$
$$
=  a_n^{-1}(\mathbb{E} \left[\norm{\hat f_n - f}_\infty \Big| E_n \right]\pmeas(E_n)
+ \mathbb{E}\left[\norm{\hat f_n - f}_\infty \Big| E_n^c \right] \pmeas(E_n^c)).
$$

For all $ n \geq 1$, $f\in Lip(L^*, \metric)$ and any sampling procedure $\mathcal{D}_n$,  $\norm{\hat f_n - f}_\infty$
is uniformly bounded with probability $1$. More precisely, we have $\sup_{f\in Lip(L^*, \metric)}\norm{ \hat f_n - f}_\infty $ $\leq 2\obserrbnd + 2L\delta_{\metric} (\mathcal{ X})$ where $\delta_{\metric} (\mathcal{ X}):= \sup_{x,x'\in\mathcal{ X}}\metric(x,x')$ with probability $1$ which follows from $f\in Lip(L^*, \metric)$ and by the design of the Lipschitz interpolation framework. This bound is finite by the assumed compactness of $\inspace$. Therefore, denoting $K := 2\obserrbnd + 2L\delta_{\metric} (\mathcal{X})$, we have that the above statement is upper bounded by 
$$
\mathbb{E} \left[a_n^{-1}\norm{\hat f_n - f}_\infty \Big| E_n \right]
\pmeas(E_n) +  a_n^{-1} K \pmeas(E_n^c).
$$
The first term is equal to the simplified expression assumed earlier in the proof and the second term converges to 0 since $ \pmeas(E_n^c) \leq \pmeas(S_n^c)$ and $\lim_{n\to\infty} a_n^{-1} \pmeas(S_n^c) =0$ as shown above.

($\star\star$)
For all $B\in\mathcal{B}_n$, let $E(B)$ denote the event 
$$E(B):= \left\{\min_{s_i \in  B \cap  G_n^\inspace} e_i \in I_1, 
\max_{s_i \in B \cap  G_n^\inspace} e_i \in I_2 \right\}.$$
Then, imposing an arbitrary ordering of the hyperballs in $\mathcal{B}_n$, we have
$$\pmeas \left( \forall B \in \mathcal{B}_n,   \min_{s_i \in  B \cap  G_n^\inspace} e_i \in I_1, 
\max_{s_i \in B \cap  G_n^\inspace} e_i \in I_2 \right) 
$$
$$
=  \pmeas \left(E(B_1) \right)\prod_{i=2}^{|\mathcal{B}_n|}\pmeas \left(E(B_i) | E(B_{i-1}), ... , E(B_1) \right).
$$
For all $i \in \{1, ... , |\mathcal{B}_n| \}$, we observe that either 
there exists $j \in \{1,...,i-1\} \text{ such that }
B_i \cap B_j \cap  G_n^\inspace \neq 0$
in which case 
$$\pmeas \left(E(B_i) | E(B_{i-1}), ... , E(B_1) \right) > \pmeas(E(B_i))$$
or no such $j$ exists, in which case 
$$\pmeas(E(B_i) | E(B_{i-1}), ... , E(B_1)) = \pmeas(E(B_i)).$$
Therefore, we have that 
$$
\pmeas( \forall B \in \mathcal{B}_n,   \min_{s_i \in  B \cap  G_n^\inspace} e_i \in I_1, 
\max_{s_i \in B \cap  G_n^\inspace} e_i \in I_2) 
$$
$$
\geq \prod_{B\in\mathcal{B}_n}\pmeas( \min_{s_i \in  B \cap  G_n^\inspace} e_i \in I_1, 
\max_{s_i \in B \cap  G_n^\inspace} e_i \in I_2).
$$

$(\star \star \star)$ In order to alleviate notation, for all $B\in\mathcal{B}_n$, we define the following event:
$$\mathcal{E}_B(n) :=  \left\{|\{i\in [n] \}, s_i \in B\}| > C_2 b_n \right\}.$$
It is trivial to see that for all $B\in\mathcal{B}_n$, $\mathcal{E}_B(n)$ is increasing in $n$ (when $b_n$ is kept fixed). Utilising the arbitrary numbering of $\mathcal{B}_n$ defined above in $(\star \star)$, we have
$$ 
\pmeas(S_n) = \pmeas(\forall B \in \mathcal{B}_n:\mathcal{E}_B(n))
=  
\pmeas \left(\mathcal{E}_{B_2}(n) \right)\prod_{i=1}^{|\mathcal{B}_n|}\pmeas \left(\mathcal{E}_{B_i}(n)| \mathcal{E}_{B_{i-1}}(n), ... , \mathcal{E}_{B_1}(n) \right)
$$
$$
\geq \prod_{i=1}^{|\mathcal{B}_n|}\pmeas \left(\mathcal{E}_{B_i}(n - (i-1)C_2 b_n \right)
\geq 
\prod_{B \in \mathcal{B}_n}\pmeas \left(\mathcal{E}_{B}(n - |\mathcal{B}_n|C_2 b_n \right)
$$
%$$
%\geq  \pmeas(\mathcal{E}_{B_i}(n - |\mathcal{B}_n|C_2 b_n )^{|\mathcal{B}_n|}
%$$
where the second to last inequality holds due to the independence of the input sampling. Computing $|\mathcal{B}_n|C_2 b_n$, we obtain 
$$|\mathcal{B}_n|C_2 b_n \leq C_2  \frac{C_1}{a_n^{\frac{d}{\alpha}}} \log(\frac{1}{a_n})\frac{1}{a_n^\eta} = C_1 C_2 \frac{\log(\frac{1}{a_n})}{a_n^{\frac{d+\eta \alpha}{\alpha}}} $$
$$
= n C_1 C_2\frac{\alpha}{d + \eta \alpha}(1 - \frac{\log(\log(n))}{\log(n)})
\leq 
n C_1 C_2\frac{\alpha}{d + \eta \alpha}.
$$
Therefore, setting the condition $C_2 \leq \frac{d+ \eta \alpha}{ 2 C_1 \alpha }$, we have 
$$\pmeas(S_n) \geq \prod_{B \in \mathcal{B}_n}\pmeas \left(\mathcal{E}_{B} \left(n(1-C_1 C_2\frac{\alpha}{d + \eta \alpha}) \right) \right)
\geq 
\prod_{B \in \mathcal{B}_n} \pmeas \left(\mathcal{E}_{B}(\frac{n}{2}) \right).$$
\end{proof}

\section{Technical Lemmas}
\begin{lem} \label{lemma:tech conv equiv}
Assume that the settings and assumptions of Theorem \ref{thm:conv rate} hold. Let $(g_n)_{n \in \nat}$ denote a sequence of non-parametric predictors and $(b_n)_{n \in \nat}$ denote a convergence rate sequence (that converges to 0). If $\exists K > 0$ such that $\sup_{f\in \overline{Lip}(L^*, \metric, M^*)}\norm{\hat g_n - f}_\infty<K$ $\forall n \in \nat$ with probability 1, then
\begin{equation}
\lim_{n \to \infty} \sup_{f\in \overline{Lip}(L^*, \metric, M^*)} \mathbb{E}[(f(x_{n+1}) - \hat g_n(x_{n+1}))^p] \to 0 
%\limsup_{n \to \infty}\sup_{f \in Lip(L^*)}\mathbb{E}_f[\norm{\hat g_n - f}] b_n^{-1}   \leq c_0
\label{equ:conv moment}
\end{equation}
if and only if $\forall \epsilon>0$
\begin{equation}
\lim_{n \to \infty}\sup_{f \in \overline{Lip}(L^*, \metric, M^*)}\pmeas(|f(x_{n+1}) - \hat g_n(x_{n+1})| > \epsilon ) = 0
\label{equ:conv prob}
\end{equation}
where $\overline{Lip}(L^*,\metric, M^*)$ is as defined in Corollary \ref{cor:online}.
\end{lem}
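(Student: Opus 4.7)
The plan is to establish the two implications separately, treating this as a uniform-over-$f$ version of the standard fact that convergence in probability is equivalent to convergence in $p$-th moment once the sequence is uniformly almost-surely bounded. The essential ingredient on both sides is just that the supremum and the expectation/probability interchange nicely because all the bounds obtained are independent of $f$; the uniform a.s. bound $K$ provides the uniform dominating constant that is needed for the "probability $\Rightarrow$ moment" direction.

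For the direction (\ref{equ:conv moment}) $\Rightarrow$ (\ref{equ:conv prob}), I would fix $\epsilon>0$ and apply Markov's inequality pointwise in $f$: for every $f \in \overline{Lip}(L^*,\metric,M^*)$,
\begin{equation*}
\pmeas\bigl(|f(x_{n+1}) - \hat g_n(x_{n+1})| > \epsilon\bigr) \;\leq\; \epsilon^{-p}\,\mathbb{E}\bigl[|f(x_{n+1}) - \hat g_n(x_{n+1})|^p\bigr].
\end{equation*}
Taking the supremum over $f$ on both sides commutes with the constant $\epsilon^{-p}$, so passing $n\to\infty$ and invoking (\ref{equ:conv moment}) immediately yields (\ref{equ:conv prob}).

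For the reverse direction (\ref{equ:conv prob}) $\Rightarrow$ (\ref{equ:conv moment}), I would use the uniform a.s.\ bound to truncate. For each fixed $\epsilon>0$ and each $f$, the random variable $Y_n^f := |f(x_{n+1}) - \hat g_n(x_{n+1})|$ satisfies $Y_n^f \leq K$ a.s., hence
\begin{equation*}
(Y_n^f)^p \;\leq\; \epsilon^p \, \mathbf{1}_{\{Y_n^f \leq \epsilon\}} + K^p \, \mathbf{1}_{\{Y_n^f > \epsilon\}} \quad \text{a.s.}
\end{equation*}
Taking expectations and then the supremum over $f$ gives
\begin{equation*}
\sup_{f} \mathbb{E}\bigl[(Y_n^f)^p\bigr] \;\leq\; \epsilon^p + K^p \sup_{f}\pmeas\bigl(Y_n^f > \epsilon\bigr).
\end{equation*}
Letting $n\to\infty$ and applying (\ref{equ:conv prob}) yields $\limsup_n \sup_f \mathbb{E}[(Y_n^f)^p] \leq \epsilon^p$, and since $\epsilon>0$ was arbitrary the $\limsup$ (and therefore the limit) equals $0$.

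There is no real obstacle here: the only thing to watch is that the bound $K$ used in the truncation step is genuinely uniform over $f\in \overline{Lip}(L^*,\metric,M^*)$ and over $n$, which is exactly the hypothesis. The Markov step is immediate and the $f$-uniformity propagates through both arguments because all intermediate inequalities are pointwise in $f$ with $f$-independent coefficients.
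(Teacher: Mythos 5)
Your proof is correct and follows essentially the same route as the paper's: Markov's inequality for the moment-to-probability direction, and the truncation bound $\epsilon^p \mathbf{1}_{A_n^c} + K^p \mathbf{1}_{A_n}$ using the uniform a.s.\ bound $K$ for the converse, with the $f$-uniformity preserved because all estimates are pointwise in $f$ with $f$-independent constants.
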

\begin{proof}
"$\implies$" can be trivially obtained by applying Markov's inequality. We show the "$\impliedby$" statement.
Fix $\epsilon > 0$ and consider an arbitrary $f\in \overline{Lip}(L^*,\metric, M^*)$. Define $A_n : = |f(x_{n+1}) - \hat g_n(x_{n+1})| > \epsilon$,
we have
$$(f(x_{n+1}) - \hat g_n(x_{n+1}))^p \leq \epsilon^p 1_{A_n^c} + K^p 1_{A_n}$$
with probability $1$.
This implies that
$$\sup_{f\in \overline{Lip}(L^*, \metric, M^*)} \mathbb{E}[(f(x_{n+1}) - \hat g_n(x_{n+1}))^p]
$$
$$
\leq \epsilon^p  + K^p \sup_{f \in \overline{Lip}(L^*, \metric, M^*)} \pmeas(|f(x_{n+1}) - \hat g_n(x_{n+1})| > \epsilon)
$$
$$
\stackrel{n \to 0}{\leq} \epsilon^p.
$$
As the choice of $\epsilon$ was arbitrary, $(\ref{equ:conv moment})$ holds.
\end{proof}
\begin{lem} \label{lem:sequence proof}
$\forall p, c >0$, we have 
$$\limsup_{x \to \infty} x \left(1 - (1-\frac{1}{x^{p+1}})^{c x^p} \right) \leq 2 c $$
\end{lem}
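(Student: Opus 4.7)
The plan is to apply Bernoulli's inequality in the form $(1-y)^r \geq 1 - ry$, valid for $y \in [0,1]$ and $r \geq 1$, with the substitution $y = x^{-(p+1)}$ and $r = cx^p$. For all $x$ large enough that $cx^p \geq 1$ (certainly for $x \geq \max\{1, c^{-1/p}\}$, which holds eventually as $x \to \infty$), the inequality applies and gives
\[
1 - \left(1 - \frac{1}{x^{p+1}}\right)^{cx^p} \;\leq\; cx^p \cdot \frac{1}{x^{p+1}} \;=\; \frac{c}{x}.
\]
Multiplying both sides by $x$ yields the uniform bound $x\bigl(1 - (1-x^{-(p+1)})^{cx^p}\bigr) \leq c$ for all sufficiently large $x$, so the $\limsup$ is at most $c$, which is trivially at most $2c$ as claimed.

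There is no real obstacle in the argument: the single non-routine check is ensuring that the exponent $cx^p$ eventually exceeds $1$ so that Bernoulli's inequality applies, which is automatic in the limit $x \to \infty$. As a sanity check on the claimed bound, one may Taylor-expand $\bigl(1 - x^{-(p+1)}\bigr)^{cx^p} = \exp\bigl(cx^p \ln(1 - x^{-(p+1)})\bigr) = 1 - c/x + O(x^{-2})$ to see that the true value of the limit is in fact exactly $c$; the factor of $2$ in the statement is therefore harmless slack, convenient for the applications of this lemma inside the proof of Theorem \ref{thm:conv rate} but unnecessary for the bound itself.
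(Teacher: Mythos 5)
Your proof is correct, and it takes a genuinely different and more elementary route than the paper's. The paper rewrites $(1-x^{-(p+1)})^{cx^p}$ as $\exp\bigl(cx^p\log(1-x^{-(p+1)})\bigr)$, expands the logarithm as a power series, bounds the exponent below by $-2c/x$ for large $x$ (this tail estimate is where the factor of $2$ enters), and then uses $x\bigl(1-e^{-2c/x}\bigr)\to 2c$. You instead invoke the generalized Bernoulli inequality $(1-y)^r\geq 1-ry$ for $y\in[0,1]$, $r\geq 1$, which immediately gives the uniform bound $x\bigl(1-(1-x^{-(p+1)})^{cx^p}\bigr)\leq c$ once $cx^p\geq 1$; you correctly flag that this threshold is the only condition to verify and that it holds eventually. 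Your argument is shorter, avoids the series manipulation entirely, and delivers the sharper constant $c$ (which your Taylor-expansion sanity check confirms is the exact limit), whereas the paper's constant $2c$ is slack absorbed in its cruder tail bound. Since the lemma is only used in the proof of Theorem \ref{thm:conv rate} to establish that certain $\limsup$s are finite, either constant suffices, and nothing downstream is affected.
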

\begin{proof}
Lemma \ref{lem:sequence proof} can be shown as follows.
$$ x \left(1 - (1-\frac{1}{x^{p+1}})^{c x^p} \right) = x \left(1-e^{c x^p\log(1-\frac{1}{x^{p+1}})} \right).$$
Expanding the exponent based on the power series expression of $\log(1+x)$, we obtain 
$$
c x^p\log(1-\frac{1}{x^{p+1}})
= - c x^p\sum_{m=1}^\infty \frac{1}{m x^{m(p+1)}}
=
- \frac{c x^p}{x^{p+1}} \sum_{m=0}^\infty \frac{1}{(m+1)x^{m(p+1)}}
$$
$$
\geq - \frac{c}{x} \sum_{m=0}^\infty \frac{1}{x^{m(p+1)}}  = - \frac{c}{x} \frac{x^{(p+1)}}{x^{(p+1)} - 1} \geq - \frac{2c}{x}
$$
for sufficiently large $x$. Substituting this equation back into the initial bound, we obtain:
$$\limsup_{x \to \infty} x \left(1-e^{c x^p\log(1-\frac{1}{x^{p+1}})} \right)
\leq \limsup_{x \to \infty}
x \left(1-e^{ - \frac{2c}{x}  } \right) \stackrel{x \to \infty}{\rightarrow} 2c.
$$
\end{proof}

\end{document}